\documentclass[sn-mathphys-num]{sn-jnl}

\usepackage{graphicx}%
\usepackage{multirow}%
\usepackage{amsmath,amssymb,amsfonts}%
\usepackage{amsthm}%
\usepackage{mathrsfs}%
\usepackage[title]{appendix}%
\usepackage{xcolor}%
\usepackage{textcomp}%
\usepackage{manyfoot}%
\usepackage{booktabs}%
\usepackage{algorithm}%
\usepackage{listings}%
\usepackage{algorithmic}
\usepackage{float}
\usepackage{textcomp}
\usepackage{mathtools}
\RequirePackage{subcaption}
\ifpdf
  \DeclareGraphicsExtensions{.eps,.pdf,.png,.jpg,.}
\else
  \DeclareGraphicsExtensions{.eps}
\fi

\theoremstyle{thmstyleone}%
\newtheorem{theorem}{Theorem}

\newtheorem{lemma}{Lemma}
\theoremstyle{thmstyletwo}%

\theoremstyle{thmstylethree}%
\newtheorem{definition}{Definition}%

\begin{document}

\title[Article Title]{A Shadow Sequence Technique for Last-Iterate Convergence Analysis of the Random Reshuffling Method: Relaxed Stepsize Conditions and Beyond}


\author[1]{\fnm{Hai} \sur{Liu}}\email{liuhai21@mails.ucas.ac.cn}

\author[1]{\fnm{Tiande} \sur{Guo}}\email{tdguo@ucas.ac.cn}

\author*[1]{\fnm{Congying} \sur{Han}}\email{hancy@ucas.ac.cn}

\affil[1]{\orgdiv{ School of Mathematical Sciences}, \orgname{University of Chinese Academy of Sciences (UCAS)}, \orgaddress{\city{Beijing}, \postcode{100049}, \country{China}}}


\abstract{We prove two last-iterate convergence bounds of the random reshuffling (RR) method in the smooth general convex finite-sum optimization. In the proved results, the stepsize sequence $\{\eta_{k}\}_{k}$ of RR is an arbitrary positive non-increasing sequence with $\eta_{1}\lesssim 1/(\operatorname{polylog}(n)L)$, where $n$ is the number of components of the finite-sum objective and $L$ is the smoothness parameter. Specifically, a last-iterate convergence rate $\widetilde{\mathcal{O}}\left(\frac{\Vert x_{1}^{1}-x_{*}\Vert^2}{n\sum_{k=1}^{K}\eta_{k}}+\sigma_{*}^{2}\sum_{k=1}^{K}\frac{\eta_{k}^{2}}{\sum_{l=k}^{K}\eta_{l}}\right)$ of RR is proved, where $x_{*}$ is a solution of the optimization problem and $\sigma_{*}^{2}$ is the variance at the solution $x_{*}$. This rate is novel to the last-iterate of RR. Also, a last-iterate convergence rate $\widetilde{\mathcal{O}}\left(\frac{\Vert x_{1}^{1}-x_{*}\Vert^2}{n\sum_{k=1}^{K}\eta_{k}}+n\sigma_{*}^{2}\sum_{k=1}^{K}\frac{\eta_{k}^{3}}{\sum_{l=k}^{K}\eta_{l}} \right)$ of RR is proved. A similar rate exists in the literature, but only under a restrictive stepsize condition $\eta\lesssim 1/(n L)$ and does not allow varying stepsizes. At the heart of our proof for the latter rate is an auxiliary sequence named the shadow sequence, which has the same epoch endpoints with the iterate sequence generated by RR. The shadow sequence can be regarded as an iterate sequence generated by performing an inexact RR on an auxiliary interpolated problem or performing a novel variant of random reshuffled SVRG on the original problem. The shadow sequence technique sheds light on deeper understanding of RR and may be useful in the analysis of other shuffling methods.}

\maketitle

\section{Introduction}

In this work, we consider the following finite-sum optimization problem:
\begin{equation}
\label{problem setting eq}
    \min_{x\in\mathbb{R}^{d}} f(x) \coloneqq \frac{1}{n}\sum_{i=1}^{n}f_{i}(x),
\end{equation}
where $f_{i}:\mathbb{R}^{d}\rightarrow\mathbb{R}$ are convex and have Lipschitz continuous gradients for all $i$. Problems of this type are ubiquitous in inverse problems \cite{jin2023convergence}, statistics  \cite{van2000asymptotic}, and machine learning \cite{bottou2018optimization}, etc. We are interested in the large-scale setting where $f$ has a large number of components $n$. Moreover, (\ref{problem setting eq}) often lives in a high dimensional space. Hence deterministic methods relying on calculating full gradients at each iteration are not suitable for solving (\ref{problem setting eq}).

Owing to low per-iteration cost, scalability, and other benefits, the stochastic gradient descent (SGD) method \cite{robbins1951stochastic} is one of the most commonly used methods for solving (\ref{problem setting eq}). In its simplest form, SGD samples an index $i_{t}$ uniformly at random from $[n]\coloneqq \{1,2,\cdots, n\}$ at the $t^{\rm{th}}$ iteration and iterates as follows
\begin{equation}
    x_{t+1} = x_{t} - \eta_{t} \nabla f_{i_{t}}(x_{t}),
\end{equation}
where $\eta_{t}>0$ is the step size at the $t^{\rm{th}}$ iteration. 

However, the without-replacement version of SGD, a.k.a the random reshuffling (RR) method \cite{bertsekas2015convex}, is used more frequently in practice. RR proceeds in epochs, with each epoch consisting of $n$ incremental updates according to a randomly generated permutation. Specifically, at the $k^{\rm{th}}$ epoch, RR samples a permutation $\pi_{k}$ uniformly at random from the set of all permutations of $[n]$, and iterates as follows
\begin{equation*}
    x_{k}^{t+1} = x_{k}^{t} - \eta_{k,t}\nabla f_{\pi_{k}^{t}}(x_{k}^{t}), \ t\in [n], \ x_{k+1}^{1} = x_{k}^{n+1},
\end{equation*}
where $\eta_{k,t}>0$ is the step size, and $\pi_{k}^{t}$ is the $t^{\rm{th}}$ element of the permutation $\pi_{k}$. The wide application of RR owes to its easier implementation and faster empirical convergence \cite{bottou2009curiously}, but the theoretical understanding of RR has been insufficient for a long period. The analysis of RR is significantly complicated by the fact that $\mathbb{E}_{\pi_{k}^{t}}[\nabla f_{\pi_{k}^{t}}(x_{k}^{t})] = \nabla f(x_{k}^{t})$ does not hold generally, and theoretical works on RR have been very limited until recently.

The first breakthrough is made by \cite{gurbuzbalaban2021random}, where it is proved that RR has the asymptotic convergence rate $\mathcal{O}(1/K^{2})$ ($K\in\mathbb{N}^{+}$ is the total number of epochs) in the smooth strongly convex setting, improving upon the $\mathcal{O} (1/K)$ rate of SGD under certain extra conditions\footnote{In this comparison, the $n$-dependence of the rates are omitted.}. Following this work, more researches further investigate RR under smooth strongly convex setting. \cite{haochen2019random} proves the non-asymptotic rate $\widetilde{\mathcal{O}}(1/(n K)^{2}+1/K^{3})$ of RR when the component functions in (\ref{problem setting eq}) are quadratic or have Lipschitz continuous Hessians\footnote{In the non-quadratic case, bounded iterates assumption and bounded gradients assumption are made.}, showing that RR converges faster than SGD after an explicitly given finite number of epochs. However, the aforementioned works both rely on the Hessian Lipschitz assumption, which is needed to extend the analysis of quadratic cases to the general strongly convex setting. Using a novel exchangeable pairs argument, \cite{nagaraj2019sgd} proves the rate $\widetilde{\mathcal{O}}(1/n K^{2})$ in the expected suboptimality of the objective value without the Hessian Lipschitz assumption. \cite{mishchenko2020random} proposes a novel analysis and the notion of shuffling variance, based on which it improves the dependence of the condition number in the convergence rate under different assumptions. Specifically, in \cite{mishchenko2020random}, the bounded gradients assumption used by \cite{haochen2019random} and \cite{nagaraj2019sgd} is removed, but an additional individual strong convexity assumption is made. The analytical techniques and tools developed in the aforementioned works extend beyond the smooth strongly convex setting and remain useful and inspiring in other settings. The reader could refer to \cite{shamir2016without, ahn2020sgd, nguyen2021unified, rajput2020closing, safran2021random, cha2023tighter} for more works on RR under this setting.

In the smooth nonconvex setting, \cite{mishchenko2020random} and \cite{nguyen2021unified} prove the convergence rate $\mathcal{O}(1/(n^{1/3}K^{2/3}))$ in the minimum of expected squared gradient norm under a general variance bound. \cite{nguyen2021unified} also proves $\lim\inf_{k\rightarrow \infty} \Vert \nabla f(x_{k}^{1})\Vert = 0$ for stepsizes $\{\eta_{k}\}_{k}$ satisfying $\sum_{k=1}^{\infty}\eta_{k}=\infty$ and $\sum_{k=1}^{\infty}\eta_{k}^{3}<\infty$. Under similar stepsize conditions, \cite{li2023convergence} proves a stronger weak convergence result $\lim_{k\rightarrow\infty} \nabla f(x_{k}^{1}) = 0$. In addition, under the Kurdyka-{\L}ojasiewicz (KL) inequality, \cite{li2023convergence} proves that the iterate sequence generated by RR converges to a single stationary point almost surely and derives the corresponding asymptotic convergence rate which depends on the KL exponent and the stepsize selection. Also, the reader could refer to \cite{liu2025improved} and references therein for upper and lower bounds of RR in the nonsmooth setting.

In the smooth general convex (a.k.a. non-strongly convex) setting, \cite{nagaraj2019sgd} first proves the convergence rate $\mathcal{O}(1/\sqrt{n K})$ of RR under the additional bounded gradients assumption in the constrained optimization setting where the constraint set is compact. \cite{mishchenko2020random} and \cite{nguyen2021unified} consider RR in the unconstrained setting and prove the convergence rate $\mathcal{O}(1/(n^{1/3}K^{2/3}))$ without the bounded gradients assumption. It is noteworthy that the theoretical guarantees of all aforementioned works \cite{nagaraj2019sgd}, \cite{mishchenko2020random}, and \cite{nguyen2021unified} are for the averaged iterate in the smooth general convex setting. However, returning the last iterate is commonly used in practice due to its lower computational and memory cost. It was unknown if a tight convergence bound can be proven for the last-iterate of RR in this setting. Hence, there existed a significant discrepancy between theory and practice, whose solution seemed elusive and was obtained only recently.

Leveraging the ideas from \cite{zamani2025exact}, a recent work \cite{liu2024last} establishes the last-iterate convergence rate $\widetilde{\mathcal{O}}(1/(n^{1/3}K^{2/3}))$ of RR in the expected suboptimality of the objective value, which nearly matches the last-iterate lower bound $\Omega(1/(n^{1/3}K^{2/3}))$ \cite{cha2023tighter}\footnote{We omit problem-dependent parameters in both the upper and lower bounds here for simplicity.}. Although the (almost) tight last-iterate convergence bound is proved, the corresponding constant stepsize $\eta$ is required to satisfy $\eta\lesssim 1/(n L)$. In the large-scale setting where $n$ is substantial, the threshold $1/(n L)$ can be extremely small or even vanish, which leaves a significant unresolved gap between theory and practice. The overly restrictive stepsize condition $\eta\lesssim 1/(n L)$ also appears in all other works addressing this setting, namely \cite{mishchenko2020random} and \cite{nguyen2021unified}\footnote{We do not include \cite{nagaraj2019sgd} in this discussion since an extra bounded gradients assumption is made there. However, even if we ignore stronger assumptions, a threshold $O(1/\sqrt{n})$ in terms of $n$-dependence still appears in \cite{nagaraj2019sgd}.}.

Very recently, \cite{liu2026random} proves the convergence rate $\mathcal{O}(D^{2}/(\eta n K)+\min\{1,\eta n L\}\eta\sigma_{*}^{2})$\footnote{$D\coloneqq \Vert x_{1}^{1}-x_{*} \Vert$ and $\sigma_{*}^{2}\coloneqq n^{-1}\sum_{i=1}^{n}\Vert \nabla f(x_{*})\Vert^{2}$ where $x_{*}$ is a solution of (\ref{problem setting eq}).} of RR for the stepsize satisfying $\eta\lesssim 1/L$, establishing the dominance of RR over SGD in the smooth general convex setting. The $\mathcal{O}(D^{2}/(\eta n K)+\eta\sigma_{*}^{2})$ branch is the same with the SGD rate and is novel in the RR literature. This branch guarantees that RR is never worse than SGD for $\eta\lesssim 1/L$. The other branch $\mathcal{O}(D^{2}/(\eta n K)+\eta^{2} n L \sigma_{*}^{2})$ shows that RR is faster than SGD when the stepsize is small, and this rate is known previously but only for the stepsize $\eta$ satisfying $\eta\lesssim 1/(n L)$. However, the theoretical guarantees of \cite{liu2026random} are for the averaged iterate rather than the last iterate, which is undesirable as discussed before.

Given the importance of both the last-iterate convergence guarantee and the mildness of the stepsize condition, it is natural to ask whether tight last-iterate convergence bounds of RR can be proved under mild stepsize conditions. We address this question in this work, and our contributions are briefly summarized as follows.

First, we prove a last-iterate convergence rate $\widetilde{\mathcal{O}}\left(\frac{\Vert x_{1}^{1}-x_{*}\Vert^2}{n\sum_{k=1}^{K}\eta_{k}}+\sigma_{*}^{2}\sum_{k=1}^{K}\frac{\eta_{k}^{2}}{\sum_{l=k}^{K}\eta_{l}}\right)$ of RR with arbitrary positive non-increasing stepsize sequence $\{\eta_{k}\}_{k}$ with the initial stepsize satisfies $\eta_{1}\lesssim 1/(\operatorname{polylog(n)}L)$, where $x_{*}$ is a solution of (\ref{problem setting eq}) and $\sigma_{*}^{2}\coloneqq n^{-1}\sum_{i=1}^{n}\Vert \nabla f_{i}(x_{*})\Vert^2$. A similar convergence rate has only been proved to hold for the averaged-iterate of RR previously \cite{liu2026random}. 

Second, we prove a last-iterate convergence rate $\widetilde{\mathcal{O}}\left(\frac{\Vert x_{1}^{1}-x_{*}\Vert^2}{n\sum_{k=1}^{K}\eta_{k}}+n\sigma_{*}^{2}\sum_{k=1}^{K}\frac{\eta_{k}^{3}}{\sum_{l=k}^{K}\eta_{l}} \right)$ of RR with arbitrary positive non-increasing stepsize sequence $\{\eta_{k}\}_{k}$ with the initial stepsize satisfies $\eta_{1}\lesssim 1/(\operatorname{polylog(n)}L)$. A similar rate has been proved by \cite{liu2024last}, but only under a restrictive stepsize condition $\eta\lesssim 1/(n L)$, which is unrealistic in large-scale optimization. Also, the result in \cite{liu2024last} does not allow varying stepsizes for technical reasons in their proofs.

Third, in the proof of the latter bound, we introduce an auxiliary iterate sequence called the shadow sequence. The iterate sequence generated by RR has the same epoch endpoints with the shadow sequence, which allows us to investigate and understand the RR-generated sequence by studying the shadow sequence, shedding light on deeper understanding of RR. Also, the shadow sequence technique may be useful in the analysis of other shuffling methods.

\section{Notation, Definitions and Assumptions}

$\langle \cdot, \cdot\rangle$ is the standard Euclidean inner product, and $\Vert \cdot\Vert\coloneqq \sqrt{\langle \cdot, \cdot\rangle}$ is the induced Euclidean norm. The Bregman divergence associated with a differentiable function $h$ is defined as $D_{h}(y,x)\coloneqq h(y)-h(x)-\langle \nabla f(x),y-x\rangle$, which is nonnegative if $h$ is convex. The set of all positive integers is denoted as $\mathbb{N}^{+}$. Given $p\in\mathbb{N}^{+}$, denote $[p]\coloneqq \{1,2,\cdots,p\}$. Given a permutation $\pi$ of $[p]$, denote $\pi^{r}$ as the $r^{\rm{th}}$ element of $\pi$ for $r\in [p]$. If the optimization problem (\ref{problem setting eq}) has a nonempty solution set, then denote a fixed solution as $x_{*}$ and further define $f_{*}\coloneqq f(x_{*}), \sigma_{*}^{2}\coloneqq n^{-1}\sum_{i=1}^{n}\Vert \nabla f_{i}(x_{*})\Vert^2$. The symbol $\mathbb{E}$ denotes the expectation of a random element. Given a stochastic optimization method, the symbol $\mathbb{E}[\cdot|X]$ denotes an expectation over the randomness of a random variable (r.v.) $X$ while conditioning on all other r.v.'s appeared before $X$ in the optimization process.
The notation $X\sim \mathcal{D}$ means that the r.v. $X$ follows the distribution $\mathcal{D}$. The discrete uniform distribution on the set $[n]$ is denoted as $\operatorname{Unif}[n]$. The notation $X \overset{\mathcal{D}}{=} Y$ means that the r.v.'s $X$ and $Y$ are equal in distribution.

In this work, the following standard assumptions will be made throughout.

\textbf{Assumption $1$}. The optimization problem (\ref{problem setting eq}) has a nonempty solution set.

\textbf{Assumption $2$}. Each $f_{i}$ in (\ref{problem setting eq}) is convex.

\textbf{Assumption $3$}. Each $f_{i}$ in (\ref{problem setting eq}) is $L$-smooth, i.e., each $f_{i}$ is differentiable and there exists an $L>0$ such that $\Vert \nabla f_{i}(x)-\nabla f_{i}(y)\Vert\leq L\Vert x-y\Vert, \ \forall x,y\in\mathbb{R}^{d}$.

In the statements of the propositions and the proofs, the term ``the algorithm" always refers to the RR method with positive non-increasing stepsizes which is defined as follows.

\section{Algorithm}

The algorithm under study is the RR method. Define the set of all permutations
\begin{equation*}
    \Gamma \coloneqq \left\{\pi \ :\pi \ \rm{is} \ \rm{a} \ \rm{permutation} \ \rm{of} \ [n] \right\}.
\end{equation*}
In each outer iteration $k\in\mathbb{N}^{+}$, RR draws a permutation $\pi_{k}$ independently and uniformly at random over $\Gamma$ and iterates as follows
\begin{equation*}
    x_{k}^{t+1} = x_{k}^{t} - \eta_{k} \nabla f_{\pi_{k}^{t}}(x_{k}^{t}), \ t = 1,2,\cdots, n,
\end{equation*}
where $\eta_{k}>0$ is the stepsize and the stepsize sequence $\{\eta_{k}\}_{k}$ is non-increasing. After $n$ incremental updates are completed in the $k^{\rm{th}}$ outer iteration, RR sets $x_{k+1}^{1} = x_{k}^{n+1}$ and enters the next outer iteration.

\section{Main Results and Related Discussions}

\subsection{Convergence Bound I}

\begin{theorem}
\label{theorem, the hard bound}
    Assume the stepsize sequence $\{\eta_{k}\}_{k}$ is non-increasing and the initial stepsize $\eta_{1}$ satisfies $\eta_{1}\leq \frac{1}{17 C^{3/2}\Lambda^{3/2}L}$, where $C = 3e^{3}$ is an absolute numerical constant and $\Lambda = \log(nK+1)+1$. Then after $K\in\mathbb{N}^{+}$ epochs, the algorithm guarantees that\footnote{We make no effort to optimize the numerical constants in the proved convergence bounds in this manuscript.}
    \begin{equation}
    \label{theorem, the hard bound, conclusion eq}
        \mathbb{E} \left[f(x_{K}^{n+1})-f_{*} \right] \leq \frac{5C \Vert x_{1}^{1}-x_{*}\Vert^2}{n\sum_{k=1}^{K}\eta_{k}} + 1710C^{3}\Lambda^{2}L\sum_{k=1}^{K} \frac{n\sigma_{*}^{2}\eta_{k}^{3}}{\sum_{l=k}^{K}\eta_{l}}.
    \end{equation}
\end{theorem}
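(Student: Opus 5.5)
The plan has three parts: a Zamani--Glineur / Liu--Zhou style last-iterate argument run at the level of epochs, a shadow sequence that makes the epoch map look like an inexact gradient step on an auxiliary convex problem, and a variance bound that produces the $\Lambda^2$ factor. The last part is the main obstacle.

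\textbf{Step 1: one-epoch inequality against an arbitrary anchor.} For any $\mathcal{F}_k$-measurable point $z$ I will prove
\[
\mathbb{E}\Vert x_{k+1}^1-z\Vert^2 \le \mathbb{E}\Vert x_k^1-z\Vert^2 - c\,n\eta_k\,\mathbb{E}[f(x_{k+1}^1)-f(z)] + E_k,
\qquad E_k \lesssim C^2\Lambda^2 L\, n^2\eta_k^3\sigma_*^2 .
\]
The standard route expands $\Vert x_k^t-z\Vert^2$ step by step and uses convexity and smoothness of each $f_{\pi_k^t}$. It leaves a drift term $\eta_k\sum_t \Vert x_k^t-x_k^1\Vert^2$. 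Bounding this drift directly forces $\eta\lesssim 1/(nL)$, so it cannot be used under the stepsize condition of Theorem~\ref{theorem, the hard bound}.

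\textbf{Step 2: the shadow sequence.} To avoid the $1/(nL)$ threshold, I replace the RR iterates inside epoch $k$ by a shadow sequence $y_k^t$ with $y_k^1=x_k^1$ and $y_k^{n+1}=x_k^{n+1}$. It is defined by interpolating the gradients, $g_t=\nabla f_{\pi_k^t}(x_k^t)$ read as $\nabla f_{\pi_k^t}(y_k^t)+(\text{correction})$, so that it is an SVRG-like reshuffled method whose corrections are anchored at $x_*$. Since the endpoints coincide, the telescoped inequality only ever sees $x_k^1$.

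Along the shadow sequence the error terms are $\Vert y_k^t-x_k^1\Vert$. I plan to control them through the without-replacement partial sums $\sum_{s<t}\nabla f_{\pi^s}(x_*)$. Their second moments are $\lesssim t\sigma_*^2$ by the sampling-without-replacement variance lemma. For the maximum over $t$ I will use a martingale/maximal argument, and this is where $\Lambda=\log(nK+1)+1$ enters. Combined with a Gr\"onwall-type recursion whose per-epoch growth factor is $(1+\eta_k L)^{\text{polylog}}\le e^3$, valid since $\eta_1\le 1/(17C^{3/2}\Lambda^{3/2}L)$, this gives the $C$ factors and the bound $E_k\lesssim L\,\eta_k^3 n^2\Lambda^2\sigma_*^2$, with only a polylog threshold on $\eta$.

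\textbf{Step 3: last-iterate conversion.} Choose anchors $z_k$ as convex combinations of $x_*$ and past iterates with weights $\eta_l/\sum_{l\ge k}\eta_l$, as in Zamani--Glineur. Sum the Step 1 inequality with weights $\eta_k/\sum_{l=k}^K\eta_l$; non-increasing stepsizes keep these weights monotone. Convexity of $f$ converts the anchored gaps into $f(x_{K+1}^1)-f_*$. The initial term becomes $5C D^2/(n\sum_k\eta_k)$, and each $E_k$ divided by $n\sum_{l\ge k}\eta_l$ gives $\Lambda^2 L n\sigma_*^2\eta_k^3/\sum_{l\ge k}\eta_l$, up to the stated constants.

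The main obstacle is Step 2: bounding the shadow-sequence deviations by $\sigma_*^2$ alone, without a bounded-gradient assumption, while keeping only polylogarithmic losses. The difficulty is that the RR iterates' gradients deviate from the $\nabla f_i(x_*)$ by terms proportional to $\Vert x_k^t-x_*\Vert$. These terms must be absorbed into the descent term $n\eta_k\,\mathbb{E}[f-f_*]$ via smoothness, $\Vert\nabla f_i(x)-\nabla f_i(x_*)\Vert^2\le 2L D_{f_i}(x,x_*)$, rather than being bounded crudely.
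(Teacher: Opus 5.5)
The gap is in Step~1, and Step~2 does not supply it.

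Summing the $n$ per-step inequalities of epoch $k$ against a fixed $\mathcal{F}_k$-measurable $z$ naturally gives $\eta_k\sum_{i=1}^{n}\mathbb{E}[f-f(z)]$, evaluated at the $n$ intermediate (shadow) points of the epoch. Converting this into $n\eta_k\,\mathbb{E}[f(x_{k+1}^{1})-f(z)]$ requires comparing $f$ at intermediate points with $f$ at the epoch endpoint. That is exactly the within-epoch drift control that costs $\eta\lesssim 1/(nL)$.

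Your Step~2 targets $\Vert y_k^t-x_k^1\Vert$, which is itself a trajectory drift. For the natural shadow sequence $u_k^t=x_k^t+\eta_k\sum_{s<t}\nabla f_{\pi_k^s}(x_*)$ one has $u_k^t-x_k^1=-\eta_k\sum_{s<t}\bigl(\nabla f_{\pi_k^s}(x_k^s)-\nabla f_{\pi_k^s}(x_*)\bigr)$, which is not controlled by $\sigma_*^2$. In particular, when $\sigma_*=0$ the shadow sequence coincides with the RR sequence. Step~2 then provides no mechanism at all, and Step~1 becomes an unproved statement about plain RR at $\eta\sim 1/(\operatorname{polylog}(n)L)$.

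The Gr\"onwall argument also fails as stated. A per-step factor $1+\eta_kL$ compounds over the $n$ steps of an epoch to $(1+\eta_kL)^{n}$. This is bounded by an absolute constant only when $\eta_k\lesssim 1/(nL)$, and nothing in your sketch makes the exponent polylogarithmic. No maximal inequality is needed anywhere, and $\Lambda$ does not come from one.

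The missing idea is to avoid any epoch-level inequality. The paper runs the Zamani--Glineur argument over the $M=nK+1$ flattened steps:
\begin{itemize}
\item With $d_k^i=\eta_k\sum_{j<i}\nabla f_{\pi_k^j}(x_*)$ and $u_t=x_t+d_t$, each step reads $u_{t+1}=u_t-\alpha_t\nabla b_{i_t}(u_t-d_t)$. This is an inexact step on the interpolated components $b_i$.
\item The one-step lemma is applied at every step, with an anchor $z_{t+1}$ that is a convex combination of $x_*,u_1,\dots,u_t$, including points of the current epoch.
\item The inexactness enters only through $\Vert d_t\Vert^2$. This does not depend on the trajectory and satisfies $\mathbb{E}\Vert d_k^i\Vert^2=\eta_k^2\frac{(i-1)(n-i+1)}{n-1}\sigma_*^2$ exactly.
\item The last iterate is read off from the single ghost-step term $\lambda\beta\,\mathbb{E}B(u_M)$, so $f$ is never compared along an epoch.
\item $\Lambda=\log M+1$ enters through the weights $v_t$ with $a=1-\Lambda^{-1}$, which give $v_{t+1}/(\lambda\beta)\le C/H_t$.
\end{itemize}

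The price, which your plan does not address, is that anchors and iterates are now correlated with the current permutation. So $\mathbb{E}\Omega_{\pi_k^r}(u_k^p)$ (for $r\ge p$) and $\mathbb{E}\Omega_{\pi_k^p}(x_k^p)$ do not vanish. The paper bounds them through the swapped permutations $\pi_k(r,m)$ and non-expansiveness of gradient steps. This produces terms of order $\frac{L^2\eta_k^2}{\varepsilon n}\sum_{m<p}\mathbb{E}b_{\pi_k^m}(x_k^m)$ with $\varepsilon=\lambda/C$. These are absorbed into the left-hand side using $\frac1n\sum_{p>m}v_{\tau_{k,p}+1}\le C\Lambda\, v_{\tau_{k,m}+1}$ and $\eta_1\le 1/(17C^{3/2}\Lambda^{3/2}L)$. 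This absorption, not a drift bound, is what dictates the polylogarithmic threshold.
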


A similar rate has been established in \cite{liu2024last}, but only under a restrictive stepsize condition $\eta_{k}\lesssim 1/(n L)$. Notably, through a novel analysis, we relax this condition to $\eta_{k}\lesssim 1/(\operatorname{polylog}(n)L)$ while preserving the favourable convergence rate. Since RR is usually applied with a relatively large stepsize in large-scale optimization, our work provides a convergence theory with broader applicability.

By setting a constant stepsize
\begin{equation*}
    \eta_{k} = \eta_{1} = \min\left\{ \frac{1}{17 C^{3/2}\Lambda^{3/2}L} , \frac{\Vert x_{1}^{1}-x_{*}\Vert^{2/3}}{(342C^{2}\Lambda^{2}L\sigma_{*}^{2})^{1/3}n^{2/3}(K(1+\log K))^{1/3}}\right\},
\end{equation*}
we get the convergence rate $\widetilde {\mathcal{O}}\left(\frac{L^{1/3}\sigma_{*}^{2/3}\Vert x_{1}^{1}-x_{*}\Vert^{4/3}}{n^{1/3}K^{2/3}}\right)$ which matches the last-iterate lower bound $\Omega \left(\frac{L^{1/3}\sigma_{*}^{2/3}\Vert x_{1}-x_{*}\Vert^{4/3}}{n^{1/3}K^{2/3}}\right)$ \cite{cha2023tighter} up to logarithmic factors. This result demonstrates that the last-iterate RR is faster than the averaged-iterate SGD when the epoch number $K$ is large.

Another interesting observation is that the proved convergence bound in Theorem \ref{theorem, the hard bound} allows for an arbitrary positive non-increasing stepsize sequence if the initial stepsize is not too large, while the convergence theory in \cite{liu2024last} only considers the constant stepsize setting. 

This limitation of \cite{liu2024last} is due to their key technical lemma, Lemma E.$2$, which considers (in their notation) a scalar sequence $\{d_{k}\}_{k=2}^{K+1}$ satisfying
\begin{equation*}
    d_{k+1} \leq \frac{a}{k} + b(1+\log k) + c\sum_{l=2}^{k}\frac{d_{l}}{k-l+2}, \ \forall k\in [K],
\end{equation*}
where $a$, $b$, and $c$ are positive constants. Lemma E.$2$ \cite{liu2024last} proves the following explicit upper bound for the last element $d_{K+1}$\footnote{By the word explicit, we mean that the upper bound does not contain elements in the sequence $\{d_{k}\}_{k=2}^{K+2}$.}
\begin{equation*}
    d_{K+1} \leq \left(\frac{a}{K} + b(1+\log K) \right)\sum_{i=0}^{K-1}\left( 2c(1+\log K)\right)^{i}.
\end{equation*}
The element $d_{K+1}$ in this technical lemma corresponds to the convergence measure $\mathbb{E}[f(x_{K}^{n+1})-f_{*}]$ where $x_{K}^{n+1}$ is the last iterate of RR. \cite{liu2024last} proves\footnote{In the following equation, $R_{l}\coloneqq \sum_{i=2}^{n}\frac{L}{n}\Vert \sum_{j=1}^{i-1}\nabla f_{\pi_{k}^{j}}(x_{*})\Vert^2$.}
\begin{equation}
\label{Liu's 2024 mid eq}
    f(x_{k+1}^{1}) - f_{*} \leq \frac{\Vert x_{1}^{1}-x_{*}\Vert^2}{2n\sum_{l=1}^{k}\eta_{l}} + \sum_{l=1}^{k}\frac{4\eta_{l}^{3}R_{l}}{\sum_{s=l}^{k}\eta_{s}} + \sum_{l=2}^{k} \frac{8n^{2}L^{2}\eta_{l-1}(\sum_{s=l}^{k}\eta_{s}^{3})}{(\sum_{s=l}^{k}\eta_{s})(\sum_{s=l-1}^{k}\eta_{s})}D_{f}(x_{l}^{1},x_{*}),
\end{equation}
and they adopt the constant stepsize setting to ensure the coefficients $a$, $b$, and $c$ are constants to apply the aforementioned technical lemma, so that the convergence rate of $\mathbb{E}[f(x_{K}^{n+1})-f_{*}]$ can be obtained. To allow an arbitrary non-increasing stepsize sequence, a natural resolution is to replace the stepsizes $\eta_{i}$'s in (\ref{Liu's 2024 mid eq}) with $\eta_{1}$ and $\eta_{k}$. By this way, one obtains a relaxed upper bound to which one can still apply Lemma E.$2$ \cite{liu2024last}. However, this natural approach introduces an $1/\eta_{k}$ factor in the final convergence bound, which causes the bound to explode when $\eta_{k}\downarrow 0$.

In contrast, the proved bound (\ref{theorem, the hard bound, conclusion eq}) does not explode when the stepsize adopted in the last epoch $\eta_{K}$ approaches $0$. This is more reasonable since, consider a thought experiment where the algorithm is run for $K-1$ epochs and the stepsize in the $K^{\rm{th}}$ epoch is set to (almost) $0$, meaning that the iterate barely moves (if it moves at all) in the $K^{\rm{th}}$ epoch. Then we should expect that $\mathbb{E}[f(x_{K}^{n+1})-f_{*}]$ enjoys similar convergence bound with $\mathbb{E}[f(x_{K-1}^{n+1})-f_{*}]$, which is indeed the case in our theory. However, the convergence bound of $\mathbb{E}[f(x_{K}^{n+1})-f_{*}]$ explodes to infinity in the aforementioned natural extension of Theorem C.$1$ in \cite{liu2024last}.

If the aforementioned natural extension is not to be applied, then it seems elusive to extend Theorem $4.4$ \cite{liu2024last} to allow an arbitrary non-increasing stepsize sequence $\{\eta_{k}\}_{k}$. Hence, this observation arguably suggests that our analysis is more favourable than that in \cite{liu2024last}, in the sense that our analysis allows more flexible stepsize strategy\footnote{However, \cite{liu2024last} considers composite optimization which is a broader setting.}.

\subsection{Convergence Bound II}

\begin{theorem}
\label{theorem, the other bound}
    Assume the stepsize sequence $\{\eta_{k}\}_{k}$ is non-increasing and the initial stepsize $\eta_{1}$ satisfies $\eta_{1}\leq \frac{1}{480C\Lambda^{2}L}$, where $C = 3e^{3}$ is an absolute numerical constant and $\Lambda = \log(nK+1)+1$. Then after $K\in\mathbb{N}^{+}$ epochs, the algorithm guarantees that
    \begin{equation}
    \label{theorem, the other bound, conclusion eq}
        \mathbb{E}\left[f(x_{K}^{n+1})-f_{*} \right] \leq \frac{C \Vert x_{1}^{1}-x_{*}\Vert^{2}}{n\sum_{k=1}^{K}\eta_{k}} + 256 C^{2}\Lambda^{2}\sigma_{*}^{2}\sum_{k=1}^{K}\frac{\eta_{k}^{2}}{\sum_{l=k}^{K}\eta_{l}}.
    \end{equation}
\end{theorem}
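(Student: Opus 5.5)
We follow the last-iterate technique of \cite{zamani2025exact,liu2024last}: compare the iterates against a sequence of moving reference points, and control the error created by shuffling using only $\eta_{1}\lesssim 1/(\Lambda^{2}L)$ rather than $\eta\lesssim 1/(nL)$. Fix $K$. For $k\in[K]$ we define reference points $z_{k}$ recursively by $z_{0}=x_{*}$ and
\[
z_{k}=\Big(1-\tfrac{w_{k}}{W_{k}}\Big)z_{k-1}+\tfrac{w_{k}}{W_{k}}x_{k}^{1},
\]
with weights $w_{k}\propto\eta_{k}$ and $W_{k}=\sum_{l\le k}w_{l}$ taken in the reverse-tail form $\sum_{l=k}^{K}\eta_{l}$. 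This choice is what produces the factors $1/\sum_{l=k}^{K}\eta_{l}$ in (\ref{theorem, the other bound, conclusion eq}) and makes a vanishing $\eta_{K}$ harmless.

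\textbf{Step 1: one-epoch inequality.} For a reference point $z$ that is measurable before $\pi_{k}$, we expand $\Vert x_{k}^{t+1}-z\Vert^{2}$ and use convexity and smoothness of each $f_{\pi_{k}^{t}}$ (via the Bregman three-point identity). Summing over $t\in[n]$ should give
\[
2\eta_{k}\sum_{t=1}^{n}\big(f_{\pi_{k}^{t}}(x_{k}^{t+1})-f_{\pi_{k}^{t}}(z)\big)\le \Vert x_{k}^{1}-z\Vert^{2}-\Vert x_{k+1}^{1}-z\Vert^{2}+\eta_{k}^{2}\sum_{t}\Vert\nabla f_{\pi_{k}^{t}}(x_{k}^{t})\Vert^{2}\cdot O(1).
\]
We then convert $\sum_{t}f_{\pi_{k}^{t}}(x_{k}^{t+1})$ into $n f(x_{k+1}^{1})$ minus drift terms. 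The variance term $\eta_{k}^{2}\sum_{t}\Vert\nabla f_{\pi_{k}^{t}}(x_{k}^{t})\Vert^{2}$ we split as $2\eta_{k}^{2}\sum_{t}\Vert\nabla f_{\pi_{k}^{t}}(x_{*})\Vert^{2}+4L\eta_{k}^{2}\sum_{t}D_{f_{\pi_{k}^{t}}}(x_{k}^{t},x_{*})$. The first part has expectation $2n\eta_{k}^{2}\sigma_{*}^{2}$, and this is the source of the $\sigma_{*}^{2}\eta_{k}^{2}$ term; we deliberately do not attempt to exploit the without-replacement structure. The second part is absorbed into the left side because $\eta_{k}L$ is small.

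\textbf{Step 2: handle the drift without $\eta\lesssim1/(nL)$.} This is the main obstacle. The inner iterates $x_{k}^{t}$ differ from $x_{k}^{1}$ and $x_{k+1}^{1}$ by up to $n$ steps, so the naive bound $\Vert x_{k}^{t}-x_{k}^{1}\Vert\le \eta_{k}\sum_{j<t}\Vert\nabla f_{\pi_{k}^{j}}\Vert$ forces $\eta nL\lesssim1$. We avoid this in two ways. First, we compare each inner step with the reference point directly, since Step 1 applies per inner iteration with $z$ fixed; the left side then becomes $\sum_{t}(f_{\pi_{k}^{t}}(x_{k}^{t+1})-f_{\pi_{k}^{t}}(z))$. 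Second, we treat each inner iterate as a ``last iterate'' at the finer time scale of $nK$ total steps, applying the Zamani--Glineur weighting to the full sequence of $nK$ incremental steps. Concretely, we run the reference-point argument over all $nK$ inner steps with per-step weights, which yields harmonic sums $\sum 1/(\text{tail length})\le \log(nK+1)+1=\Lambda$. The function-value mismatch $f_{\pi_{k}^{t}}(x_{k}^{t+1})$ versus $f(\cdot)$ is a sampling error. Because the reference points are built from past iterates, it has zero mean up to a correlation term, which we bound via smoothness by $L\eta_{k}^{2}\times$(gradient norms) and by Bregman terms $D_{f}(x,x_{*})$. We then absorb these into the left side using $\eta_{1}\le 1/(480C\Lambda^{2}L)$: one factor $\Lambda$ comes from the harmonic weights and the other from a recursion-closing step.

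\textbf{Step 3: close the recursion.} Summing the weighted inequalities gives a bound on $\mathbb{E}[f(x_{K}^{n+1})-f_{*}]$ of the form
\[
\frac{\Vert x_{1}^{1}-x_{*}\Vert^{2}}{n\sum_{k}\eta_{k}}+\sigma_{*}^{2}\sum_{k}\frac{\eta_{k}^{2}}{\sum_{l\ge k}\eta_{l}}+c\Lambda L\eta_{1}\max_{k}(\text{previous last-iterate errors}).
\]
To close it without the $1/\eta_{K}$ blow-up of Lemma E.2 of \cite{liu2024last}, we prove by induction on $k$ that the claimed bound (with constants $C$ and $256C^{2}\Lambda^{2}$) holds for every prefix. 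We use monotonicity of $\eta_{k}$ to check that the prefix right-hand sides are dominated by the final one up to a factor $C=3e^{3}$; this factor comes from $\prod(1+1/m)$-type products bounded by $e^{3}$. The condition $c\Lambda^{2}L\eta_{1}C\le 1/2$ lets the induction absorb the error term.
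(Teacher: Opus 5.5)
Your high-level plan matches the paper's: a Zamani--Glineur weighting over all $nK$ inner steps (the paper adds a ghost step of size $\beta=\eta_1$), reference points $z_{t+1}$ that are convex combinations of $x_*,x_1,\dots,x_t$, and tail sums $H_t=\sum_{r\ge t}\alpha_r$ producing $\Lambda$. But the step you call the main obstacle is never actually resolved. Saying that $f_{\pi_k^t}(x)-f(x)$ ``has zero mean up to a correlation term'' bounded ``via smoothness'' does not escape $\eta\lesssim 1/(nL)$. Within epoch $k$ the only iterate independent of $\pi_k^t$ is $x_k^1$, and comparing $x_k^t$ with it costs $\Vert x_k^t-x_k^1\Vert=O(t\eta_k)$ gradient norms.

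The missing idea is the exchangeable-pair swap combined with nonexpansiveness of gradient steps. One has $\mathbb{E}\tilde{\Omega}_{\pi_k^p}(x_k^p)=-\frac{1}{n}\sum_{j<p}\mathbb{E}[g_{\pi_k^p}(x_k^p(p,j))-g_{\pi_k^p}(x_k^p)]$, where $x_k^p(p,j)$ is the trajectory under $\pi_k$ with positions $p$ and $j$ exchanged. The two trajectories agree up to step $j$, use different components at step $j$, and then apply the same nonexpansive maps, so $\Vert x_k^p(p,j)-x_k^p\Vert\le\eta_k\Vert\nabla f_{\pi_k^p}(x_k^j)-\nabla f_{\pi_k^j}(x_k^j)\Vert$. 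This gives $|\mathbb{E}\tilde{\Omega}_{\pi_k^r}(x_k^p)|\le 4\eta_k[\mathbb{E}\Vert\nabla f_{\pi_k^p}(x_k^p)\Vert^2+\frac{1}{n}\sum_{j<p}\mathbb{E}\Vert\nabla f_{\pi_k^j}(x_k^j)\Vert^2+\sigma_*^2]$ for $r\ge p$. The bound is first order in $\eta_k$, not $L\eta_k^2$, and it carries a crucial factor $1/n$ on the earlier steps. Relatedly, your Step 1 conversion of $\sum_t f_{\pi_k^t}(x_k^{t+1})$ into $nf(x_{k+1}^1)$ with epoch-level reference points is exactly the route that forces $\eta\lesssim 1/(nL)$; it must be dropped, not patched.

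Your Step 3 then fails as stated. You claim the prefix right-hand sides are dominated by the final one up to a factor $C$. But $C\Vert x_1^1-x_*\Vert^2/(n\sum_{k\le K'}\eta_k)$ decreases in $K'$, by a factor $K/K'$ for constant steps. So the best you can plug in for the max of previous errors is the $K'=1$ bound, of order $C\Vert x_1^1-x_*\Vert^2/(n\eta_1)$. Your term $c\Lambda L\eta_1\max(\cdot)$ then leaves a contribution of order $\Lambda L\Vert x_1^1-x_*\Vert^2/n$ that does not decay with $K$. A max-type recursion cannot give the $1/(n\sum_k\eta_k)$ rate.

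The paper needs no recursion. With the bias bound above, every error term is a quantity at step $j$ carried by later steps $p>j$ of the same epoch with weight $1/n$. The inequality $\frac{1}{n}\sum_{p=j+1}^{n}v_{\tau_{k,p}+1}\le C\Lambda v_{\tau_{k,j}+1}$ moves these back to the common weights $c_t=\alpha_tv_{t+1}$. They are then absorbed in one shot into two places:
\begin{itemize}
\item the slack $\lambda\sum_tc_t\mathbb{E}G(x_t)$, which exists only because the weights use $a=1-\lambda<1$, so that $\alpha_tv_{t+1}-\Delta v_tH_t=\lambda\alpha_tv_{t+1}$;
\item the negative term $-\frac{1}{2}\sum_tc_t\mathbb{E}b_{i_t}(z_{t+1},x_t)$ from the one-step inequality.
\end{itemize}
The condition $120C\Lambda L\eta_1\le\lambda/4$ is exactly $\eta_1\le 1/(480C\Lambda^2L)$: one $\Lambda$ comes from the weight comparison and one from the slack $\lambda$. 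Finally, $C=3e^3$ enters through $v_1\le C\lambda\beta/H_1$ and $v_{t+1}\le C\lambda\beta/H_t$, used after dropping all but the ghost-step term $\lambda\beta\mathbb{E}G(x_M)$, not through an induction.
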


A sharper rate, which is better than the proved (\ref{theorem, the other bound, conclusion eq}) by logarithmic factors and requires a stepsize condition $\eta\lesssim 1/L$, is recently established in \cite{liu2026random}. However, their result is for the averaged iterate rather than the last iterate. To the best of our knowledge, a last-iterate convergence bound which is similar with (\ref{theorem, the other bound, conclusion eq}) has not been established in the literature until our work.

By setting a constant stepsize
\begin{equation*}
    \eta_{k} = \eta_{1} = \min\left\{\frac{1}{480C\Lambda^{2}L}, \frac{\Vert x_{1}^{1}-x_{*}\Vert}{16\sqrt{C}\Lambda \sigma_{*} \sqrt{n K (1+\log K)}}\right\},
\end{equation*}
we get the convergence rate $\tilde{\mathcal{O}}\left(\frac{L\Vert x_{1}^{1}-x_{*}\Vert^2}{n K}+\frac{\sigma_{*}\Vert x_{1}^{1}-x_{*}\Vert}{\sqrt{n K}}\right)$, which matches the averaged-iterate convergence rate $\mathcal{O}\left(\frac{L\Vert x_{1}-x_{*}\Vert^2}{n K} + \frac{\sigma_{*}\Vert x_{1}-x_{*}\Vert}{\sqrt{n K}} \right)$ of SGD up to logarithmic factors. This result demonstrates that last-iterate RR is comparable to SGD (even with the less commonly used averaged iterate) for all epoch numbers $K\in\mathbb{N}^{+}$ up to logarithmic factors.

During proof-reading and revision of this manuscript, we learned about a recent independent work \cite{chen2026large} which obtained a rate similar with (\ref{theorem, the other bound, conclusion eq}). Their result applies to every element of the iterate sequence, however, as in \cite{liu2024last}, the convergence theory in \cite{chen2026large} does not allow varying stepsizes. Also, \cite{chen2026large} does not contain a rate similar with (\ref{theorem, the hard bound, conclusion eq}) under a mild stepsize condition. In fact, \cite{chen2026large} uses the convergence rate of Theorem $4.4$ in \cite{liu2024last}, which is established under the restrictive condition $\eta\lesssim 1/(n L)$, to complete their main theorem. Due to limited time, the proofs and details in \cite{chen2026large} are not read by us authors yet, and more detailed comments and comparisons will be added after careful and comprehensive reading of \cite{chen2026large}.

\subsection{The Shadow Sequence Technique}

At the heart of the proof of Theorem \ref{theorem, the hard bound} is an auxiliary iterate sequence $\{u_{k}^{i}\}_{k,i}$ defined in Definition \ref{def, basics}, which might be called the shadow sequence to distinguish it from other virtual sequences in the proof. Since this sequence may be of independent interest, we give some intuitions behind its construction and two interpretations of it.

\subsubsection{Motivation}

One ingredient of its construction is the idea originally proposed in \cite{zamani2025exact} and later adapted by \cite{liu2024last} to analyze the last-iterate convergence of RR. We take the proof of \cite{liu2024last} as an example to demonstrate the aforementioned idea. \cite{liu2024last} designs an auxiliary sequence $\{z_{k,l}\}_{k,l}$ where $z_{k,l}$ is a convex combination of $x_{*},x_{1}^{1},x_{2}^{1},\cdots, x_{l-1}^{1}$ ($\{x_{k}^{i}\}_{k,i}$ is the iterate sequence generated by RR), i.e., $z_{k,l} = w_{k,0}x_{*}+\sum_{j=1}^{l-1}w_{k,j}x_{j}^{1}$. Instead of bounding $f(x_{j+1}^{1})-f(x_{*})$, \cite{liu2024last} bounds $f(x_{j+1}^{1})-f(z_{k,j})$. By the convex combination formulation of $\{z_{k,l}\}_{k,l}$ and convexity of $f$, one sees that $f(z_{k,j})\leq w_{k,0}f(x_{*})+\sum_{i=1}^{j-1}w_{k,i}f(x_{i}^{1})$.
Based on the above property, \cite{liu2024last} proves $\sum_{j=1}^{k}p_{j}(f(x_{j+1}^{1})-f(z_{k,j}))\geq \Omega (f(x_{k+1}^{1})-f(x_{*}))$ for all $k\in {K}$ and based on which \cite{liu2024last} obtains the convergence and convergence rate of $\mathbb{E}[f(x_{K}^{n+1})-f_{*}]$, where $\{p_{k}\}_{k}$ is a carefully designed scalar sequence.

Another ingredient is the idea originally proposed in \cite{mishchenko2020random} and later utilized and developed by \cite{liu2026random} to provide an averaged-iterate convergence rate of RR under an $n$-independent stepsize condition $\eta_{k}\lesssim 1/L$ in smooth general convex finite-sum optimization. We briefly introduce the proof of Theorem $3$ in \cite{liu2026random}. \cite{liu2026random} first introduces an auxiliary sequence, which is originally constructed in \cite{mishchenko2020random}, as follows
\begin{equation}
\label{motivation, the shifted optima eq}
    y_{k}^{i+1} \coloneqq y_{k}^{i} - \eta_{k}\nabla f_{\pi_{k}^{i}}(x_{*}),  \ y_{k+1}^{1} \coloneqq y_{k}^{n+1}, \ y_{1}^{1} \coloneqq x_{*}.
\end{equation}
Since $\sum_{i=1}^{n}\nabla f_{\pi_{k}^{i}}(x_{*}) = \sum_{i=1}^{n}\nabla f_{i}(x_{*}) = n\nabla f(x_{*}) = 0$, one sees that the first element and the last element of $\{y_{k,i}\}_{i}$ coincide with $x_{*}$, i.e.,
\begin{equation}
\label{motivation, a property of the y seq eq}
    y_{k}^{1} = y_{k}^{n+1} = x_{*}.
\end{equation}
Also inspired by \cite{mishchenko2020random}, \cite{liu2026random} recursively bounds $\Vert x_{k}^{i+1}-y_{k}^{i+1}\Vert^2$ using the recursive definitions of $\{x_{k}^{i}\}_{i}$ and $\{y_{k}^{i}\}_{i}$ as follows
\begin{equation}
\label{motivation, recursive relation btw two sequences eq}
    \Vert x_{k}^{i+1}-y_{k}^{i+1}\Vert^2 = \Vert x_{k}^{i}-y_{k}^{i}-\eta_{k}(\nabla f_{\pi_{k}^{i}}(x_{k}^{i})-\nabla f_{\pi_{k}^{i}}(x_{*}))\Vert^2,
\end{equation}
and then obtains the following inequality
\begin{equation*}
    \eta_{k}D_{f_{\pi_{k}^{i}}}(x_{k}^{i},x_{*}) \leq \Vert x_{k}^{i}-y_{k}^{i}\Vert^2 - \Vert x_{k}^{i+1}-y_{k}^{i+1}\Vert^2 + 2\eta_{k}D_{f_{\pi_{k}^{i}}}(y_{k}^{i},x_{*}).
\end{equation*}
Summing the above inequality from $i=1$ to $n$ gives
\begin{equation*}
    \eta_{k}\sum_{i=1}^{n} D_{f_{\pi_{k}^{i}}}(x_{k}^{i},x_{*})\leq \Vert x_{k}^{1}-y_{k}^{1}\Vert^2 - \Vert x_{k}^{n+1}-y_{k}^{n+1}\Vert^2 + 2\eta_{k}\sum_{i=1}^{n} D_{f_{\pi_{k}^{i}}}(y_{k}^{i},x_{*}).
\end{equation*}
By the important property (\ref{motivation, a property of the y seq eq}), \cite{liu2026random} obtains
\begin{equation}
\label{motivation, one-step eq}
    \eta_{k}\sum_{i=1}^{n} D_{f_{\pi_{k}^{i}}}(x_{k}^{i},x_{*})\leq \Vert x_{k}^{1}-x_{*}\Vert^2 - \Vert x_{k+1}^{1}-x_{*}\Vert^2 + 2\eta_{k}\sum_{i=1}^{n} D_{f_{\pi_{k}^{i}}}(y_{k}^{i},x_{*}),
\end{equation}
where the terms $\Vert x_{k}^{1}-x_{*}\Vert^2 - \Vert x_{k+1}^{1}-x_{*}\Vert^2$ will telescope in the summation w.r.t. the epoch number $k$. \cite{liu2026random} proceeds to bound the difference between $D_{f_{\pi_{k}^{i}}}(x_{k}^{i},x_{*})$ and $D_{f}(x_{k}^{i},x_{*})$ and eventually obtains a bound for $\eta_{k}\sum_{k=1}^{K}\sum_{i=1}^{n}D_{f}(x_{k}^{i},x_{*})$.

We would like to apply the ideas from \cite{zamani2025exact, liu2024last} to modify the proofs in \cite{liu2026random} to prove the last-iterate convergence of RR under certain mild stepsize condition. Our target is to prove an equation which is similar with (\ref{motivation, one-step eq}), with $D_{f_{\pi_{k}^{i}}}(x_{k}^{i},x_{*})$ replaced by $D_{f_{\pi_{k}^{i}}}(x_{k}^{i},\tilde{z}_{k})$ for certain reference point $\tilde{z}_{k}$. However, this idea is faced with immediate obstacles. For example, if one mimics (\ref{motivation, the shifted optima eq}) to define
\begin{equation*}
    y_{k}^{i+1} \coloneqq y_{k}^{i} - \eta_{k}\nabla f_{\pi_{k}^{i}}(\tilde{z}_{k}), \ y_{k}^{1} \coloneqq \tilde{z}_{k},
\end{equation*}
then typically $y_{k}^{n+1}\neq \tilde{z}_{k}$ and we do not have $y_{k}^{1} = y_{k}^{n+1} = \tilde{z}_{k}$ to facilitate the telescoping across epochs (with the technique from \cite{zamani2025exact}). Other candidate remedies also face critical challenges, e.g., they may introduce the restrictive stepsize condition $\eta_{k}\lesssim 1/(n L)$ which obviously contradicts our purpose.

\subsubsection{Construction and Intuitions}

The proof of (\ref{motivation, one-step eq}) in \cite{liu2026random} critically depends on the philosophy of \cite{mishchenko2020random}, which argues that the iterate sequence $\{x_{k}^{i}\}_{i}$ of RR in the $k^{\rm{th}}$ epoch converges to the sequence of drifted solution $x_{*}$ defined as the sequence $\{y_{k}^{i}\}_{i}$ (see (\ref{motivation, the shifted optima eq})) rather than $x_{*}$ itself.

We benefit but also deviate from this idea and construct the shadow sequence as follows. We recall the unrolling step in the recursive bounding of the two sequences $\{x_{k}^{i}\}_{i}$ and $\{y_{k}^{i}\}_{i}$, equation (\ref{motivation, recursive relation btw two sequences eq}), and present it again as follows
\begin{equation}
    \Vert x_{k}^{i+1}-y_{k}^{i+1}\Vert^2 = \Vert x_{k}^{i}-y_{k}^{i}-\eta_{k}\nabla f_{\pi_{k}^{i}}(x_{k}^{i})+\eta_{k}\nabla f_{\pi_{k}^{i}}(x_{*})\Vert^2.
\end{equation}
Instead of attributing the term $\eta_{k}\nabla f_{\pi_{k}^{i}}(x_{*})$ to the unrolling of the update of $\{y_{k}^{i}\}_{i}$ as in \cite{mishchenko2020random,liu2026random}, we attribute this term, along with the term $-\eta_{k}\nabla f_{\pi_{k}^{i}}(x_{k}^{i})$, to the unrolling of the update of the sequence $\{x_{k}^{i}\}_{i}$\footnote{We write $\{x_{k}^{i}\}_{i}$ here with slight abuse of notation.}.

With this change, we are actually considering two sequences $\{\tilde{x}_{k}^{i}\}_{i}$ and $\{\tilde{y}_{k}^{i}\}_{i}$, where the latter sequence satisfies $\tilde{y}_{k}^{i} \equiv \tilde{z}_{k}$ and is constant, while the former sequence iterates as follows
\begin{equation}
\label{construction, def of the shadow seq}
    \tilde{x}_{k}^{i+1} \coloneqq \tilde{x}_{k}^{i} - \eta_{k}(\nabla f_{\pi_{k}^{i}}(x_{k}^{i})-\nabla f_{\pi_{k}^{i}}(x_{*})), \ \tilde{x}_{k}^{1} \coloneqq x_{k}^{1}.
\end{equation}
Since $\sum_{i=1}^{n}\nabla f_{\pi_{k}^{i}}(x_{*}) = \sum_{i=1}^{n}\nabla f_{i}(x_{*}) = n\nabla f(x_{*}) = 0$, we have
\begin{equation}
\label{construction, property of the shadow seq}
    \tilde{x}_{k}^{1} = x_{k}^{1}, \ \tilde{x}_{k}^{n+1} = x_{k}^{n+1}.
\end{equation}
In words, given a fixed epoch, the start point and the end point of $\{\tilde{x}_{k}^{i}\}_{i}$ in that epoch coincides with those of the RR generated sequence $\{x_{k}^{i}\}_{i}$, respectively. The sequence defined in (\ref{construction, def of the shadow seq}) is the shadow sequence, which is denoted as $\{u_{k}^{i}\}_{k,i}$ in our proofs.

Benefits of this construction are apparent. For example, since $\tilde{y}_{k}^{i} \equiv \tilde{z}_{k}, i\in [n+1]$, the difference of squared distance terms in our one-step descent inequality (see Lemma \ref{lemma, one step lemma})  will telescope in the summation w.r.t. the epoch number $k$. For another example, although we are bounding a weighted sum of $\{D_{f}(\tilde{x}_{k}^{i},\tilde{z}_{k})\}_{k,i}$ and finally obtain a bound for $\mathbb{E} D_{f}(\tilde{x}_{K}^{n+1},x_{*}) = \mathbb{E}[f(\tilde{x}_{K}^{n+1})-f_{*}]$, we nevertheless obtain an upper bound on the interested quantity $\mathbb{E}[f(x_{K}^{n+1})-f_{*}]$ due to the property (\ref{construction, property of the shadow seq}).

\textit{In a nutshell, a perturbed version of the RR-generated iterate sequence with vanished perturbation at endpoints of every epoch, namely the shadow sequence, converges to the non-perturbed solution of the optimization problem.}

\subsubsection{Two Interpretations}

We have two interpretations for the shadow sequence. To illustrate the first interpretation, we adopt the notations from Definition \ref{def, basics} and define 
\begin{equation*}
    B(x)\coloneqq f(x)-f(x_{*}), \ b_{i}(x)\coloneqq f_{i}(x)-f_{i}(x_{*})-\langle \nabla f_{i}(x_{*}),x-x_{*}\rangle, \ i\in [n].
\end{equation*}
Note that $\nabla b_{i}(x) = \nabla f_{i}(x)-\nabla f_{i}(x_{*})$, then by (\ref{construction, def of the shadow seq}), the iteration of the shadow sequence can be reformulated as follows
\begin{equation}
\label{interpretation, reformulation of the shadow seq}
    \tilde{x}_{k}^{i+1} = \tilde{x}_{k}^{i} - \eta_{k}\nabla b_{\pi_{k}^{i}}(\tilde{x}_{k}^{i}+(x_{k}^{i}-\tilde{x}_{k}^{i})).
\end{equation}
Consider the following auxiliary finite-sum problem
\begin{equation}
\label{interpretation, aux optimization problem}
    \min_{x\in\mathbb{R}^{d}} B(x) = \frac{1}{n}\sum_{i=1}^{n}b_{i}(x),
\end{equation}
where the equality is by definitions of $b_{i}$ and $B$. It is apparent that the $b_{i}$'s ($i\in [n]$) share the same minimizer $x_{*}$, i.e., the problem (\ref{interpretation, aux optimization problem}) satisfies the interpolation condition. Hence, regarding the term $x_{k}^{i}-\tilde{x}_{k}^{i}$ as a stochastic error, the shadow sequence is generated by an inexact RR performed on an auxiliary interpolated finite-sum problem.

For the second interpretation, we recall the definition of the shadow sequence
\begin{equation}
\label{interpretation, def of the shadow seq again}
    \tilde{x}_{k}^{i+1} \coloneqq \tilde{x}_{k}^{i} - \eta_{k}(\nabla f_{\pi_{k}^{i}}(x_{k}^{i})-\nabla f_{\pi_{k}^{i}}(x_{*})).
\end{equation}
Note that the optimization problem (\ref{problem setting eq}) is unconstrained, we have $\nabla f(x_{*}) = 0$. Then (\ref{interpretation, def of the shadow seq again}) can be equivalently written as
\begin{equation}
    \tilde{x}_{k}^{i+1} \coloneqq \tilde{x}_{k}^{i} - \eta_{k}(\nabla f_{\pi_{k}^{i}}(x_{k}^{i})-\nabla f_{\pi_{k}^{i}}(x_{*})+\nabla f(x_{*})).
\end{equation}
Interestingly, this iterative scheme can be regarded as a variant of the SVRG-type methods \cite{johnson2013accelerating, allen2018katyusha, lan2019unified, liu2026relating} with the checkpoint (a.k.a. the full gradient evaluation point) replaced by the solution $x_{*}$ and the random reshuffling sampling strategy.

Some researchers argued that the theoretical advantage of RR over SGD is (at least partially) related to an implicit variance reduction effect of RR \cite{gurbuzbalaban2021random, mishchenko2020random}. Our proof reveals and utilizes the fact that the last-iterate of RR coincides with the last-iterate of the shadow sequence, which can be regarded as an iterate sequence produced by performing an inexact RR on an auxiliary interpolated problem or performing a variant of random reshuffled SVRG on the original problem. The implicit variance reduction effect of RR is arguably supported and made clearer by our analysis.

\clearpage

\appendix

\section{Technical lemmas for the proof of Theorem \ref{theorem, the hard bound}}

\subsection{An auxiliary interpolated problem, the shadow sequence, and their properties}

\begin{definition}
\label{def, basics}
    For notational convenience, define $q_{i}\coloneqq \nabla f_{i}(x_{*}), \forall i\in [n]$,
    \begin{equation*}
        b_{i}(x)\coloneqq f_{i}(x)-f_{i}(x_{*})-\langle \nabla f_{i}(x_{*}),x-x_{*}\rangle, \ B(x)\coloneqq f(x)-f(x_{*}).
    \end{equation*}
    Also, $\Omega_{i}(x)\coloneqq b_{i}(x)-B(x)$. In the $k^{\rm{th}}$ epoch ($k\in \mathbb{N}^{+}$), define
    \begin{equation*}
        d_{k}^{i}\coloneqq \eta_{k}\sum_{j=1}^{i-1} q_{\pi_{k}^{j}}, \ u_{k}^{i}\coloneqq x_{k}^{i}+d_{k}^{i}, \forall i\in [n+1],
    \end{equation*}
    where we adopt the convention that $\sum_{j=1}^{0} q_{\pi_{k}^{j}}\coloneqq 0$.
\end{definition}

As the notations indicated, $b_{i}$ and $B$ are Bregman distances associated with functions $f_{i}$ and $f$, respectively. Since the component functions $f_{i}$'s are convex, all $b_{i}$'s and $B$ are nonnegative. Note that functions $B(x)$ and $b_{i}(x)$ (where $i\in [n]$) share the same property that they achieve the minimal function value at $x_{*}$ and the minimum is $0$. These properties lead to inequalities that will be frequently used in the proof and are summarized as follows.

\begin{lemma}
\label{lemma, special func property}
    Let $h:\mathbb{R}^{d}\rightarrow \mathbb{R}$ be a convex and $L-$smooth function which satisfies $\nabla h(x_{*})=0$ and $h(x_{*})=0$, where $x_{*}\in\mathbb{R}^{d}$. Then for any $x,y\in \mathbb{R}^{d}$,
    \begin{align}
    \label{lemma, special func property, eq 1}
        \Vert \nabla h(x)\Vert^2 &\leq 2Lh(x), \\
    \label{lemma, special func property, eq 2}
        \left|h(x)-h(y)\right| &\leq \Vert \nabla h(y)\Vert \Vert x-y\Vert + \frac{L}{2}\Vert x-y\Vert^2.
    \end{align}
    Specifically, by Definition \ref{def, basics}, the functions $B$ and $b_{i}, i\in [n]$ satisfy (\ref{lemma, special func property, eq 1}) and (\ref{lemma, special func property, eq 2}).
\end{lemma}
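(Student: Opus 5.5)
The plan is to use only two standard consequences of $L$-smoothness. The first is the descent inequality
\[
h(y)\leq h(x)+\langle \nabla h(x),y-x\rangle+\frac{L}{2}\Vert y-x\Vert^2 ,
\]
together with its convex counterpart $h(y)\geq h(x)+\langle \nabla h(x),y-x\rangle$.

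For (\ref{lemma, special func property, eq 1}), apply the descent inequality at the point $y=x-\frac{1}{L}\nabla h(x)$. This gives $h(y)\leq h(x)-\frac{1}{2L}\Vert\nabla h(x)\Vert^2$. Since $h$ is convex and $\nabla h(x_{*})=0$, the point $x_{*}$ is a global minimizer, so $h(y)\geq h(x_{*})=0$. Combining the two inequalities yields $\Vert \nabla h(x)\Vert^2\leq 2Lh(x)$.

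For (\ref{lemma, special func property, eq 2}), both one-sided bounds come from expanding around $y$.
\begin{itemize}
\item Upper side: smoothness gives $h(x)-h(y)\leq \langle\nabla h(y),x-y\rangle+\frac{L}{2}\Vert x-y\Vert^2$, and Cauchy--Schwarz bounds the inner product by $\Vert\nabla h(y)\Vert\Vert x-y\Vert$.
\item Lower side: convexity gives $h(x)-h(y)\geq \langle \nabla h(y),x-y\rangle\geq -\Vert\nabla h(y)\Vert\Vert x-y\Vert$. This is at least $-\Vert\nabla h(y)\Vert\Vert x-y\Vert-\frac{L}{2}\Vert x-y\Vert^2$.
\end{itemize}
Together these give the absolute-value bound.

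It remains to check that $B$ and each $b_{i}$ satisfy the hypotheses.
\begin{itemize}
\item $b_{i}$ is $f_{i}$ minus an affine function. It is therefore convex and $L$-smooth with the same constant, since subtracting an affine map leaves gradient differences unchanged. Moreover $b_{i}(x_{*})=0$ and $\nabla b_{i}(x_{*})=\nabla f_{i}(x_{*})-\nabla f_{i}(x_{*})=0$.
\item $B=f-f(x_{*})$ is an average of convex $L$-smooth functions shifted by a constant, so it is convex and $L$-smooth. Also $B(x_{*})=0$, and $\nabla B(x_{*})=\nabla f(x_{*})=0$ because $x_{*}$ is an unconstrained minimizer (Assumption 1).
\end{itemize}
No step is a real obstacle. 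The only point needing care is that (\ref{lemma, special func property, eq 1}) relies on $h\geq 0$ globally, which follows from convexity together with $\nabla h(x_{*})=0$.
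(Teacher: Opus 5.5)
Your proof is correct and follows essentially the same route as the paper. For the first inequality you use smoothness together with $h\geq h(x_{*})=0$, spelling out the descent step at $x-\frac{1}{L}\nabla h(x)$ that the paper leaves implicit; for the second you combine the smoothness upper bound plus Cauchy--Schwarz with the convexity lower bound expanded at $y$, and your explicit check that $B$ and each $b_{i}$ satisfy the hypotheses fills in what the paper only asserts.
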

\begin{proof}
    By convexity and $L-$smoothness of $h$,
    \begin{equation*}
        \Vert \nabla h(x)\Vert^2 \leq 2L\left(h(x)-h(x_{*}) \right) = 2Lh(x),
    \end{equation*}
    where the last equality is by $h(x_{*})=0$. This proves (\ref{lemma, special func property, eq 1}). By $L-$smoothness of $h$, we have
    \begin{equation}
    \label{lemma, special func property, eq 3}
        h(x)-h(y) \leq \langle \nabla h(y),x-y\rangle + \frac{L}{2}\Vert x-y\Vert^2 \leq \Vert \nabla h(y)\Vert \Vert x-y\Vert + \frac{L}{2}\Vert x-y\Vert^2.
    \end{equation}
    By convexity of $h$, we have
    \begin{equation}
    \label{lemma, special func property, eq 4}
        h(y)-h(x)\leq \langle \nabla h(y), y-x\rangle \leq \Vert \nabla h(y)\Vert \Vert x-y\Vert.
    \end{equation}
    Combining (\ref{lemma, special func property, eq 3}) and (\ref{lemma, special func property, eq 4}) gives equation (\ref{lemma, special func property, eq 2}).
\end{proof}

Some properties of the sequence $\{u_{k}^{i}\}_{i\in [n+1]}$ are listed as follows.

\begin{lemma}
    For any fixed $k\in\mathbb{N}^{+}$, we have 
    \begin{align}
    \label{virtual seq, property 1}
        u_{k}^{1} &= x_{k}^{1}, \  u_{k}^{n+1} = x_{k}^{n+1}, \\
    \label{virtual seq, property 2}
        u_{k}^{i+1} &= u_{k}^{i} - \eta_{k}\nabla b_{\pi_{k}^{i}}(u_{k}^{i}-d_{k}^{i}).
    \end{align}
\end{lemma}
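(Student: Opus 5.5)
The plan is to verify both identities directly from Definition~\ref{def, basics} and the RR update rule. For (\ref{virtual seq, property 1}), the first identity follows from the convention $\sum_{j=1}^{0}q_{\pi_{k}^{j}}=0$: we get $d_{k}^{1}=0$ and hence $u_{k}^{1}=x_{k}^{1}$. For the second identity, note that $\pi_{k}$ is a permutation of $[n]$. Therefore
\begin{equation*}
    d_{k}^{n+1}=\eta_{k}\sum_{j=1}^{n}q_{\pi_{k}^{j}}=\eta_{k}\sum_{i=1}^{n}\nabla f_{i}(x_{*})=n\eta_{k}\nabla f(x_{*})=0,
\end{equation*}
where the last equality uses Assumption 1 and the fact that (\ref{problem setting eq}) is unconstrained and smooth, so $\nabla f(x_{*})=0$. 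This gives $u_{k}^{n+1}=x_{k}^{n+1}$.

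For (\ref{virtual seq, property 2}), I will first record two elementary facts. By the definition of $d_{k}^{i}$ we have $d_{k}^{i+1}=d_{k}^{i}+\eta_{k}q_{\pi_{k}^{i}}$ for every $i\in[n]$. Differentiating $b_{i}$ gives $\nabla b_{i}(x)=\nabla f_{i}(x)-\nabla f_{i}(x_{*})=\nabla f_{i}(x)-q_{i}$. Combining these with the RR step $x_{k}^{i+1}=x_{k}^{i}-\eta_{k}\nabla f_{\pi_{k}^{i}}(x_{k}^{i})$, I compute
\begin{equation*}
    u_{k}^{i+1}=x_{k}^{i+1}+d_{k}^{i+1}=x_{k}^{i}+d_{k}^{i}-\eta_{k}\bigl(\nabla f_{\pi_{k}^{i}}(x_{k}^{i})-q_{\pi_{k}^{i}}\bigr)=u_{k}^{i}-\eta_{k}\nabla b_{\pi_{k}^{i}}(x_{k}^{i}).
\end{equation*}
Finally, substituting $x_{k}^{i}=u_{k}^{i}-d_{k}^{i}$ yields (\ref{virtual seq, property 2}).

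There is no real obstacle here; the argument is a direct bookkeeping calculation. The only step that needs care is the vanishing of $d_{k}^{n+1}$. That step relies on two things: each epoch sums over a full permutation, and $\nabla f(x_{*})=0$ holds, which follows from unconstrained optimality of $x_{*}$.
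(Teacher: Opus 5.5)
Your proposal is correct and follows the paper's proof essentially step for step. You get $d_k^1=0$ from the empty-sum convention and $d_k^{n+1}=0$ from the full-permutation sum, then do the one-step bookkeeping with $d_k^{i+1}=d_k^i+\eta_k q_{\pi_k^i}$ and $\nabla b_i=\nabla f_i-q_i$; your explicit appeal to first-order optimality $\nabla f(x_*)=0$ just spells out what the paper leaves implicit.
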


\begin{proof}
    Since $d_{k}^{1}=0$ and $d_{k}^{n+1}=\eta_{k} \sum_{j=1}^{n} q_{\pi_{k}^{j}}=\eta_{k}\sum_{i=1}^{n}\nabla f_{i}(x_{*})=0$, equation (\ref{virtual seq, property 1}) holds. By Definition \ref{def, basics},
    \begin{align*}
        u_{k}^{i+1} &= x_{k}^{i+1}+\eta_{k}\sum_{j=1}^{i}q_{\pi_{k}^{j}} = x_{k}^{i} - \eta_{k}\nabla f_{\pi_{k}^{i}}(x_{k}^{i})+\eta_{k}\sum_{j=1}^{i}q_{\pi_{k}^{j}} \\
        &= u_{k}^{i}-\eta_{k}\sum_{j=1}^{i-1}q_{\pi_{k}^{j}}- \eta_{k}\nabla f_{\pi_{k}^{i}}(x_{k}^{i})+\eta_{k}\sum_{j=1}^{i}q_{\pi_{k}^{j}} \\
        &= u_{k}^{i}-\eta_{k} (\nabla f_{\pi_{k}^{i}}(x_{k}^{i})-\nabla f_{\pi_{k}^{i}}(x_{*})) \\
        &= u_{k}^{i}-\eta_{k}\nabla b_{\pi_{k}^{i}}(x_{k}^{i})  \\
        &= u_{k}^{i}-\eta_{k}\nabla b_{\pi_{k}^{i}}(u_{k}^{i}-d_{k}^{i}).
    \end{align*}
\end{proof}

\subsection{The lemma for sampling without replacement}

\begin{lemma}
\label{lemma, SWR lemma}
    Let $X_{1},\cdots, X_{n}\in \mathbb{R}^{d}$ be fixed vectors with $\sum_{i=1}^{n}X_{i}=0$. Given a fixed $p\in [n]$, let $X_{\pi^{1}},\cdots, X_{\pi^{p}}$ be sampled uniformly without replacement from $\{X_{1},\cdots, X_{n}\}$. Then\footnote{This lemma is due to \cite{mishchenko2020random}.}
    \begin{equation}
    \label{SWR lemma, eq 1}
        \mathbb{E}\left\Vert \sum_{j=1}^{p}X_{\pi^{j}}\right\Vert^2 = \frac{p(n-p)}{n-1} \frac{1}{n}\sum_{i=1}^{n} \Vert X_{i}\Vert^2.
    \end{equation}
    Specifically, by Definition \ref{def, basics},
    \begin{equation}
    \label{SWR lemma, eq 2}
        \mathbb{E}\Vert d_{k}^{i}\Vert^2 = \eta_{k}^{2} \frac{(i-1)(n-i+1)}{n-1}\sigma_{*}^{2},
    \end{equation}
    where $\sigma_{*}^{2}\coloneqq n^{-1}\sum_{i=1}^{n}\Vert \nabla f_{i}(x_{*})\Vert^2$.
\end{lemma}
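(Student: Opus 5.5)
We expand the squared norm of the partial sum and compute the expectation term by term, using only the symmetry of a uniformly random permutation. Write
\begin{equation*}
    \mathbb{E}\left\Vert \sum_{j=1}^{p}X_{\pi^{j}}\right\Vert^2 = \sum_{j=1}^{p}\mathbb{E}\Vert X_{\pi^{j}}\Vert^2 + \sum_{j\neq l,\ j,l\in[p]}\mathbb{E}\langle X_{\pi^{j}},X_{\pi^{l}}\rangle .
\end{equation*}

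\textbf{Diagonal terms.} Each $\pi^{j}$ is marginally uniform on $[n]$, so $\mathbb{E}\Vert X_{\pi^{j}}\Vert^2 = \frac{1}{n}\sum_{i=1}^{n}\Vert X_{i}\Vert^2 \eqqcolon \sigma^2$. These $p$ terms contribute $p\sigma^2$.

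\textbf{Off-diagonal terms.} For $j\neq l$, the pair $(\pi^{j},\pi^{l})$ is uniform over ordered pairs of distinct indices. Hence
\begin{equation*}
    \mathbb{E}\langle X_{\pi^{j}},X_{\pi^{l}}\rangle = \frac{1}{n(n-1)}\sum_{a\neq b}\langle X_{a},X_{b}\rangle = \frac{1}{n(n-1)}\left(\left\Vert \sum_{i=1}^{n}X_{i}\right\Vert^2 - \sum_{i=1}^{n}\Vert X_{i}\Vert^2\right) = -\frac{\sigma^2}{n-1},
\end{equation*}
where the last equality uses the hypothesis $\sum_{i=1}^{n}X_{i}=0$. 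There are $p(p-1)$ such ordered pairs.

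\textbf{Combining.} The total is
\begin{equation*}
    p\sigma^2 - \frac{p(p-1)}{n-1}\sigma^2 = \frac{p(n-1-p+1)}{n-1}\sigma^2 = \frac{p(n-p)}{n-1}\sigma^2,
\end{equation*}
which is (\ref{SWR lemma, eq 1}). The case $n=1$ needs separate treatment because of the division by $n-1$. There $p=1$ and $X_{1}=0$, so both sides vanish, provided the right side is read as $0$.

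\textbf{The specialization.} Take $X_{i}=q_{i}=\nabla f_{i}(x_{*})$. These sum to $n\nabla f(x_{*})=0$ because $x_{*}$ is an unconstrained minimizer of $f$. In the $k^{\rm th}$ epoch, $\pi_{k}^{1},\dots,\pi_{k}^{i-1}$ is a uniform sample without replacement of size $p=i-1$. Since $d_{k}^{i}=\eta_{k}\sum_{j=1}^{i-1}q_{\pi_{k}^{j}}$, we get $\mathbb{E}\Vert d_{k}^{i}\Vert^2 = \eta_{k}^2\frac{(i-1)(n-i+1)}{n-1}\sigma_{*}^2$, which is (\ref{SWR lemma, eq 2}). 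For $i=1$ the formula gives $d_{k}^{1}=0$, consistent with the empty-sum convention in Definition \ref{def, basics}.

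The only step needing care is the pair-distribution argument for the cross terms. Everything else is routine algebra.
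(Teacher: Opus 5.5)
Your proof is correct and follows the paper's argument. You expand the square, use the marginal uniformity of each $\pi^{j}$ for the diagonal terms, and use the uniformity of each ordered pair $(\pi^{j},\pi^{l})$ over distinct index pairs, together with $\sum_{i}X_{i}=0$, to get $-\sigma^{2}/(n-1)$ for each cross term; your added remarks on the degenerate case $n=1$ and the endpoint case $i=1$ of the specialization are harmless refinements the paper leaves implicit.
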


\begin{proof}
    Denote $\sigma^{2}\coloneqq n^{-1}\sum_{i=1}^{n}\Vert X_{i}\Vert^2$. Then for any fixed $j\in [n]$, we have $\mathbb{E}\Vert X_{\pi^{j}}\Vert^2=\sigma^{2}$. For $r,s\in [n]$ and $r\neq s$, 
    \begin{equation*}
        \mathbb{E} \langle X_{\pi^{r}},X_{\pi^{s}} \rangle = \frac{1}{n(n-1)}\sum_{i\neq j} \langle X_{i},X_{j}\rangle = -\frac{\sigma^{2}}{n-1},
    \end{equation*}
    where the last equality holds since $0=\Vert \sum_{i}X_{i}\Vert^2=\sum_{i}\Vert X_{i}\Vert^2+\sum_{i\neq j}\langle X_{i},X_{j}\rangle$. Hence
    \begin{equation*}
        \mathbb{E}\Vert \sum_{j=1}^{p}X_{\pi^{j}}\Vert^2 = p\sigma^{2}-p(p-1)\frac{\sigma^{2}}{n-1} = \frac{p(n-p)}{n-1} \sigma^{2},
    \end{equation*}
    and this proves (\ref{SWR lemma, eq 1}). By Definition \ref{def, basics} and (\ref{SWR lemma, eq 1}), equation (\ref{SWR lemma, eq 2}) holds.
\end{proof}

\subsection{An one-step descent inequality}

\begin{lemma}
\label{lemma, one step lemma}
    Consider $x=u-d$, $u^{+}=u-\alpha \nabla b_{i}(x)$, where $i\in [n]$ and $L\alpha\leq 1/3$. Then for any $z\in \mathbb{R}^{d}$ and $\rho \in (0,1/4)$, we have
    \begin{equation}
    \label{one step lemma, eq 1}
        B(u)-B(z)\leq \frac{\Vert u-z\Vert^2-\Vert u^{+}-z\Vert^2}{2\alpha}+\Omega_{i}(z)-\Omega_{i}(x)+(2L\alpha+\rho)b_{i}(z)+2\rho B(u)+\frac{3L}{\rho} \Vert d\Vert^2.
    \end{equation}
\end{lemma}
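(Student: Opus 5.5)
The plan is to expand the squared distance in the update $u^{+}=u-\alpha\nabla b_{i}(x)$. This produces an inner product against the gradient at the \emph{perturbed} point $x=u-d$. I then convert that inner product into function-value differences using convexity and smoothness of $b_{i}$, and pay for the mismatch between $u$ and $x$ with $\Vert d\Vert^2$ terms, using Lemma \ref{lemma, special func property}. Concretely, $\Vert u^{+}-z\Vert^2=\Vert u-z\Vert^2-2\alpha\langle\nabla b_{i}(x),u-z\rangle+\alpha^{2}\Vert\nabla b_{i}(x)\Vert^2$, so
\begin{equation*}
\frac{\Vert u-z\Vert^2-\Vert u^{+}-z\Vert^2}{2\alpha}=\langle \nabla b_{i}(x),x-z\rangle+\langle\nabla b_{i}(x),d\rangle-\frac{\alpha}{2}\Vert\nabla b_{i}(x)\Vert^2 .
\end{equation*}

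Next I use the standard co-coercive lower bound for a convex $L$-smooth function,
\begin{equation*}
\langle\nabla b_{i}(x),x-z\rangle\ge b_{i}(x)-b_{i}(z)+\frac{1}{2L}\Vert\nabla b_{i}(x)-\nabla b_{i}(z)\Vert^2 .
\end{equation*}
I then rewrite $b_{i}(x)-b_{i}(z)=B(x)-B(z)+\Omega_{i}(x)-\Omega_{i}(z)$ using the definition of $\Omega_{i}$. Rearranging gives
\begin{equation*}
B(u)-B(z)\le\frac{\Vert u-z\Vert^2-\Vert u^{+}-z\Vert^2}{2\alpha}+\Omega_{i}(z)-\Omega_{i}(x)+[B(u)-B(x)]-\langle\nabla b_{i}(x),d\rangle+\frac{\alpha}{2}\Vert\nabla b_{i}(x)\Vert^2-\frac{1}{2L}\Vert g\Vert^2,
\end{equation*}
where $g\coloneqq\nabla b_{i}(x)-\nabla b_{i}(z)$. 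What remains is to bound the three error terms.

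Set $G=\nabla b_{i}(z)$, so that $\nabla b_{i}(x)=g+G$, and treat the error terms one at a time.
\begin{itemize}
\item \textbf{Gradient-norm term.} We have $\frac{\alpha}{2}\Vert g+G\Vert^2\le\alpha\Vert g\Vert^2+\alpha\Vert G\Vert^2\le\alpha\Vert g\Vert^2+2L\alpha\, b_{i}(z)$, using (\ref{lemma, special func property, eq 1}) for $b_{i}$. Since $L\alpha\le1/3$, a reserve $-(\frac{1}{2L}-\alpha)\Vert g\Vert^2\le-\frac{1}{6L}\Vert g\Vert^2$ is left over.
\item \textbf{Cross term.} Split $-\langle g+G,d\rangle$. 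The $g$-part satisfies $|\langle g,d\rangle|\le\frac{1}{6L}\Vert g\Vert^2+\frac{3L}{2}\Vert d\Vert^2$, which is absorbed by the reserve. The $G$-part satisfies $\Vert G\Vert\Vert d\Vert\le\sqrt{2Lb_{i}(z)}\,\Vert d\Vert\le\rho\, b_{i}(z)+\frac{L}{2\rho}\Vert d\Vert^2$.
\item \textbf{Mismatch term.} Apply (\ref{lemma, special func property, eq 2}) to $B$ with $y=u$ together with (\ref{lemma, special func property, eq 1}). This gives $B(u)-B(x)\le\sqrt{2LB(u)}\,\Vert d\Vert+\frac{L}{2}\Vert d\Vert^2\le\rho B(u)+\frac{L}{2\rho}\Vert d\Vert^2+\frac{L}{2}\Vert d\Vert^2$.
\end{itemize}

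Collecting terms gives the coefficient $(2L\alpha+\rho)$ on $b_{i}(z)$ and at most $2\rho$ on $B(u)$; in fact I only need $\rho$. The coefficient on $\Vert d\Vert^2$ is $\frac{3L}{2}+\frac{L}{\rho}+\frac{L}{2}=2L+\frac{L}{\rho}$. Because $\rho<1/4$, we have $2L\le\frac{L}{2\rho}$, so this is at most $\frac{3L}{\rho}$, which yields (\ref{one step lemma, eq 1}).

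The main obstacle is the cross term $\langle\nabla b_{i}(x),d\rangle$. It must be controlled without producing a $B(x)$ or $b_{i}(x)$ term that has no place in the target inequality. The key choice is to center the gradient at $z$, so that the bad part is absorbed by the co-coercivity surplus $\frac{1}{2L}\Vert g\Vert^2$ and the rest is charged to $b_{i}(z)$. The condition $L\alpha\le1/3$ is exactly what leaves enough surplus after also paying for the $\frac{\alpha}{2}\Vert\nabla b_{i}(x)\Vert^2$ term.
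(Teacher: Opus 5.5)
Your proof is correct and follows the paper's skeleton. Both expand $\Vert u^{+}-z\Vert^2$, apply co-coercivity of $b_{i}$, split $b_{i}=B+\Omega_{i}$, and recenter $\nabla b_{i}(x)$ at $\nabla b_{i}(z)$ so that the $\Vert \nabla b_{i}(x)-\nabla b_{i}(z)\Vert^2$ part is absorbed by the $\frac{1}{2L}$ co-coercivity surplus, which is where $L\alpha\leq 1/3$ enters.

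The only difference is the mismatch term. You bound $B(u)-B(x)$ directly through $\nabla B(u)$ using (\ref{lemma, special func property, eq 2}) with $y=u$. The paper instead expands $B$ at $x$, merges $\langle \nabla B(x),d\rangle$ into the cross term, and then converts the resulting $\rho B(x)$ back via $B(x)\leq (1+\rho)B(u)+\frac{L}{\rho}\Vert d\Vert^2$. That extra conversion is why the paper ends with $2\rho B(u)$, while your route gives only $\rho B(u)$ and a slightly smaller $\Vert d\Vert^2$ constant.
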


\begin{proof}
    Since $u^{+}=u-\alpha \nabla b_{i}(x)$,
    \begin{equation*}
        \Vert u^{+}-z\Vert^2 = \Vert u-\alpha \nabla b_{i}(x)-z\Vert^2=\Vert u-z\Vert^2-2\alpha \langle \nabla b_{i}(x),u-z\rangle+\alpha^2 \Vert \nabla b_{i}(x)\Vert^2.
    \end{equation*}
    By convexity and $L$-smoothness of $b_{i}$,
    \begin{equation*}
        b_{i}(x)-b_{i}(z)\leq \langle \nabla b_{i}(x),x-z\rangle-\frac{1}{2L}\Vert \nabla b_{i}(x)-\nabla b_{i}(z)\Vert^2.
    \end{equation*}
    Combining the above two equations gives
    \begin{align*}
        b_{i}(x)-b_{i}(z) &\leq \frac{\Vert u-z\Vert^2-\Vert u^{+}-z\Vert^2}{2\alpha}-\langle \nabla b_{i}(x),d\rangle + \frac{\alpha}{2}\Vert \nabla b_{i}(x)\Vert^2 \\
        &\quad - \frac{1}{2L}\Vert \nabla b_{i}(x)-\nabla b_{i}(z)\Vert^2.
    \end{align*}
    Note that $b_{i}(x)=B(x)+\Omega_{i}(x)$ and $b_{i}(z)=B(z)+\Omega_{i}(z)$, we have
    \begin{align*}
        B(x)-B(z) &\leq \frac{\Vert u-z\Vert^2-\Vert u^{+}-z\Vert^2}{2\alpha}-\langle \nabla b_{i}(x),d\rangle+\Omega_{i}(z)-\Omega_{i}(x) \\ 
        & \quad-\frac{1}{2L}\Vert \nabla b_{i}(x)-\nabla b_{i}(z)\Vert^2+\frac{\alpha}{2}\Vert \nabla b_{i}(x)\Vert^2.
    \end{align*}
    By $L$-smoothness of $B$, we have $B(u)-B(x)\leq \langle \nabla B(x),d\rangle+\frac{L}{2}\Vert d\Vert^2$. Then 
    \begin{equation}
    \label{one step lemma, eq 2}
        B(u)-B(z) \leq \frac{\Vert u-z\Vert^2-\Vert u^{+}-z\Vert^2}{2\alpha} + \Omega_{i}(z)-\Omega_{i}(x)+R,
    \end{equation}
    where
    \begin{equation*}
        R \coloneqq -\langle \nabla b_{i}(x)-\nabla B(x),d\rangle + \frac{\alpha}{2}\Vert \nabla b_{i}(x)\Vert^2 + \frac{L}{2}\Vert d\Vert^2 - \frac{1}{2L}\Vert \nabla b_{i}(x)-\nabla b_{i}(z)\Vert^2.
    \end{equation*}
    We bound the inner product term and certain non-negative term in $R$ as follows. By Young's inequality, we have
    \begin{align*}
        -\langle \nabla b_{i}(x)-\nabla B(x),d\rangle &= -\langle (\nabla b_{i}(x)-\nabla b_{i}(z))+(\nabla b_{i}(z)-\nabla B(x)),d\rangle \\
        &\leq \frac{1}{16L} \Vert \nabla b_{i}(x)-\nabla b_{i}(z)\Vert^2 + 4L\Vert d\Vert^2 + \frac{L}{\rho} \Vert d\Vert^2 \\
        & \quad + \frac{\rho}{4L}\Vert \nabla b_{i}(z)-\nabla B(x)\Vert^2 \\
        &\leq \frac{1}{16L} \Vert \nabla b_{i}(x)-\nabla b_{i}(z)\Vert^2 + 4L\Vert d\Vert^2 + \frac{L}{\rho} \Vert d\Vert^2 \\
        & \quad + \frac{\rho}{2L}\Vert \nabla b_{i}(z)\Vert^2 + \frac{\rho}{2L}\Vert \nabla B(x)\Vert^2 \\
        &\leq \frac{1}{16L} \Vert \nabla b_{i}(x)-\nabla b_{i}(z)\Vert^2 + 4L\Vert d\Vert^2 + \frac{L}{\rho} \Vert d\Vert^2 \\
        & \quad + \rho b_{i}(z) + \rho B(x),
    \end{align*}
    where the last inequality is by Lemma \ref{lemma, special func property}. To further bound the $\rho B(x)$ term, note that
    \begin{align*}
        B(x) &\leq B(u)+\langle \nabla B(u),-d\rangle + \frac{L}{2}\Vert d\Vert^2 \\
        &\leq B(u) + \frac{\rho}{2L}\Vert \nabla B(u)\Vert^2 + \frac{L}{2\rho}\Vert d\Vert^2 + \frac{L}{2}\Vert d\Vert^2 \\
        &\leq (1+\rho)B(u) + \frac{L}{\rho} \Vert d\Vert^2,
    \end{align*}
    where the last inequality is by Lemma \ref{lemma, special func property} and the setting $\rho\in (0,1/4)$. Also note that 
    \begin{equation*}
        \frac{\alpha}{2}\Vert \nabla b_{i}(x)\Vert^2 \leq \alpha \Vert \nabla b_{i}(x)-b_{i}(z)\Vert^2 + 2L\alpha b_{i}(z),
    \end{equation*}
    where Lemma \ref{lemma, special func property} is applied. Combining the above inequalities gives
    \begin{align*}
        R &\leq (2L\alpha+\rho)b_{i}(z) + 2\rho B(u) + (4L+L/\rho+L+L/2)\Vert d\Vert^2 \\
        & \quad + (\alpha+1/(16L)-1/(2L))\Vert \nabla b_{i}(x)-b_{i}(z)\Vert^2 \\
        &\leq (2L\alpha+\rho)b_{i}(z) + 2\rho B(u) + \frac{3L}{\rho}\Vert d\Vert^2,
    \end{align*}
    where the last inequality is by $\rho\in (0,1/4)$ and $L\alpha \leq 1/3$. Plugging the above inequality into (\ref{one step lemma, eq 2}) gives the desired equation (\ref{one step lemma, eq 1}).
\end{proof}

\subsection{Two indexing conventions, a scalar sequence, and its properties}

We will adopt two perspectives in the subsequent proof. From the first perspective, the algorithm is flattened with the iterate  indexed by $t\in\mathbb{N}^{+}$. From the second perspective, the algorithm is nested with the iterate indexed by a pair $(k,i)$ where $k\in\mathbb{N}^{+}$ and $i\in [n]$. Lemma \ref{lemma, one step lemma} adopts the first perspective, and later we will switch to the other to facilitate certain estimates by using interchanging of the summation. Hence the correspondence of the two index systems is needed.

Also note that we will apply Lemma \ref{lemma, one step lemma} to each iterate of the sequence $\{u_{k}^{i}\}_{k,i}$ until the algorithm is run for the prefixed iterations. In the last application of Lemma \ref{lemma, one step lemma}, the desired last iterate $u_{\rm{obj}}$ should have a successor $u_{\rm{obj}}^{+}$ but the (virtual) algorithm ends at $u_{\rm{obj}}$. To formally apply Lemma \ref{lemma, one step lemma} to $u_{\rm{obj}}$, we need to define a ghost iteration starting at $u_{\rm{obj}}$ which is not performed by the (virtual) algorithm.

Motivated by the above needs, we make the following definition.

\begin{definition}
\label{def, global index}
    Assume the algorithm is run for $K\in\mathbb{N}^{+}$ epochs. Denote $T\coloneqq nK$. For any $t\in [T]$, there exists a unique pair $(k,i)$ such that $t=(k-1)n+i$ where $k\in\mathbb{N}^{+}, i\in [n]$. Based on this representation, define
    \begin{equation*}
        \alpha_{t}\coloneqq \eta_{k}, \ i_{t}\coloneqq \pi_{k}^{i}, \ x_{t}\coloneqq x_{k}^{i}, \ d_{t}\coloneqq d_{k}^{i} \ u_{t}\coloneqq u_{k}^{i}.
    \end{equation*}
    Then (\ref{virtual seq, property 2}) can be written as 
    \begin{equation*}
        u_{t+1} = u_{t} - \alpha_{t}\nabla b_{i_{t}}(x_{t}), \ t\in [T].
    \end{equation*}
    We also denote $\tau_{k,i}\coloneqq (k-1)n+i$ for $k\in [K], i\in [n]$.
    
    Denoting $M\coloneqq T+1$, define\footnote{The following $\beta$ notation is introduced to emphasize that the stepsize $\alpha_{M}$ corresponds to a ghost iteration which does not actually happen in the optimization process.}
    \begin{equation*}
        \alpha_{M}\coloneqq \eta_{1}, \ i_{M}\sim \operatorname{Unif}[n], \ u_{M+1}\coloneqq u_{M}-\alpha_{M} \nabla b_{i_{M}}(x_{M}), \ \beta\coloneqq \alpha_{M} = \eta_{1}.
    \end{equation*}
    Since $x_{M}=u_{M}$, we have $u_{M+1}=u_{M}-\eta_{1}\nabla b_{i_{M}}(u_{M})$.
\end{definition}

Following the proof strategy of \cite{zamani2025exact}, the point $z$ in Lemma \ref{lemma, one step lemma} will be defined as a convex combination of $x_{*}$ and elements of $\{u_{t}\}_{t}$. Coefficients of this convex combination will be determined by the sequence $\{v_{t}\}_{t}$ defined as follows.

\begin{definition}
\label{def, auxiliary seq}
    Define 
    \begin{align*}
        H_{t} &\coloneqq \sum_{r=t}^{M}\alpha_{r}, t\in [M], \\
        \Lambda &\coloneqq \log(M)+1, \ \lambda \coloneqq \Lambda^{-1}, \ a\coloneqq 1-\lambda.
    \end{align*}
    With these notations, define 
    \begin{equation*}
        v_{M+1}\coloneqq 1, \ v_{t}\coloneqq v_{t+1}\left(1-a\frac{\alpha_{t}}{H_{t}} \right),  \ \Delta v_{t}\coloneqq v_{t+1}-v_{t}=a\frac{\alpha_{t}}{H_{t}}v_{t+1}.
    \end{equation*}
    We also define $r_{t}\coloneqq \alpha_{t}/H_{t}$. Direct calculation gives
    \begin{equation*}
        \alpha_{t}v_{t+1}-\Delta v_{t}H_{t} = \lambda \alpha_{t} v_{t+1}.
    \end{equation*} 
\end{definition}

To prove certain important properties of the scalar sequence $\{v_{t}\}_{t}$, we typically encounter a target inequality with multiple elements of $\{v_{t}\}_{t}$ in it, which is troublesome to handle due to the recursive definition of $\{v_{t}\}_{t}$. 

Without loss of generality, suppose the target inequality contains $v_{t_{1}}, v_{t_{1}+1}, \cdots, v_{t_{2}}$. Informally, to make the inequality less implicit, one possible approach is to write
\begin{equation}
\label{appendix eq, connect two v's}
    v_{t^{\prime}} = v_{t^{\prime}+1} (1-ar_{t^{\prime}}) = \cdots = v_{t_{2}} \prod_{j=t^{\prime}}^{t_{2}-1}(1-ar_{j})
\end{equation}
for all $t_{1}\leq t^{\prime}<t_{2}$ with the hope that we can treat $v_{t_{2}}$ as a constant. An immediate problem is that $\prod_{j=t^{\prime}}^{t_{2}-1}(1-ar_{j})$ does not seem to admit a compact formulation. As a remedy, a helpful technical observation is that the product of $1-r_{j}$ telescopes as follows
\begin{equation*}
    \prod_{j=t^{\prime}}^{t_{2}-1} (1-r_{j}) = \prod_{j=t^{\prime}}^{t_{2}-1} (1-\frac{\alpha_{j}}{H_{j}}) = \prod_{j=t^{\prime}}^{t_{2}-1} \frac{H_{j+1}}{H_{j}} = \frac{H_{t_{2}}}{H_{t^{\prime}}}.
\end{equation*}
Hence we need certain inequalities connecting $1-ar_{j}$ and $1-r_{j}$ to make the aforementioned idea work. Such elementary tools are provided in the following lemma.

\begin{lemma}
\label{elementary ineq lemma}
    Assume that $a\in (0,1)$ and $r\in [0,1/2]$. Then 
    \begin{align}
    \label{elementary ineq lemma, eq 1}
        \log(1-a r) &\leq a\log(1-r)+2\lambda r, \\
    \label{elementary ineq lemma, eq 2}
        \log(1-a r) &\leq a\log(1-r)+\log2, \\
    \label{elementary ineq lemma, eq 3}
        \log(1-a r) &\geq a\log(1-r).
    \end{align}
\end{lemma}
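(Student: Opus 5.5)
The three inequalities are elementary one-variable facts, and I would prove each directly. Throughout, $\lambda=1-a$, consistent with Definition \ref{def, auxiliary seq}. Since $r\le 1/2$ and $a<1$, we have $1-r\ge 1/2$ and $1-ar\ge 1/2$, so all logarithms are well defined.

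For (\ref{elementary ineq lemma, eq 3}) I would use concavity of $\log$. Write $1-ar=a(1-r)+(1-a)\cdot 1$, which is a convex combination of $1-r$ and $1$. Jensen's inequality for the concave function $\log$ then gives
\begin{equation*}
\log(1-ar)\ge a\log(1-r)+(1-a)\log 1=a\log(1-r).
\end{equation*}

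For (\ref{elementary ineq lemma, eq 2}) a crude sign argument suffices. The left side is $\log(1-ar)\le 0$. On the right side, $1-r\ge 1/2$ and $a<1$ give $a\log(1-r)\ge a\log(1/2)\ge -\log 2$. Hence the right-hand side is nonnegative and the inequality holds.

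For (\ref{elementary ineq lemma, eq 1}), which is the only step needing a short computation, I would set
\begin{equation*}
g(r)\coloneqq \log(1-ar)-a\log(1-r)-2\lambda r,
\end{equation*}
note that $g(0)=0$, and show $g'\le 0$ on $[0,1/2]$. Differentiating gives
\begin{equation*}
g'(r)=\frac{a}{1-r}-\frac{a}{1-ar}-2\lambda=\frac{a(1-a)r}{(1-r)(1-ar)}-2\lambda=\lambda\left(\frac{ar}{(1-r)(1-ar)}-2\right).
\end{equation*}
For $r\in[0,1/2]$ we have $ar\le 1/2$ and $(1-r)(1-ar)\ge 1/4$, so the bracket is at most $2-2=0$. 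Therefore $g$ is non-increasing, $g(r)\le g(0)=0$, and this is exactly (\ref{elementary ineq lemma, eq 1}).

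The main (mild) point of care is the derivative computation in (\ref{elementary ineq lemma, eq 1}). The key is factoring out $1-a=\lambda$, which is what makes the additive term $2\lambda r$ (rather than $2r$) sufficient.
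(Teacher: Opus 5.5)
Your proof is correct, and for two of the three inequalities it takes a different route from the paper.

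For (\ref{elementary ineq lemma, eq 1}) you follow essentially the paper's approach. The paper studies $h_1=-g$ and computes the same derivative with the factor $1-a=\lambda$ pulled out. It reduces to showing $2(1-r)(1-ar)\ge ar$ on $[0,1/2]$, which it proves by noting that the vertex $(3a+2)/(4a)$ of the quadratic $2ar^2-(3a+2)r+2$ lies at or beyond $1$, then evaluating at $r=1/2$ to get $1-a>0$. Your bounds $ar\le 1/2$ and $(1-r)(1-ar)\ge 1/4$ reach the same conclusion with less computation.

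For (\ref{elementary ineq lemma, eq 2}) and (\ref{elementary ineq lemma, eq 3}) your route genuinely differs. The paper uses the single function $h_2(r)=\log(1-ar)-a\log(1-r)$, whose derivative $a(1-a)r/\bigl((1-r)(1-ar)\bigr)$ is nonnegative. Then $h_2(0)=0$ gives (\ref{elementary ineq lemma, eq 3}), and $h_2(r)\le h_2(1/2)=\log(1-a/2)+a\log 2\le\log 2$ gives (\ref{elementary ineq lemma, eq 2}). You instead obtain (\ref{elementary ineq lemma, eq 3}) from concavity of $\log$ applied to the convex combination $1-ar=a(1-r)+(1-a)\cdot 1$. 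You obtain (\ref{elementary ineq lemma, eq 2}) by observing that its left side is nonpositive while its right side is nonnegative.

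The paper's approach is uniform: one derivative computation serves both inequalities and pins down the exact maximum $h_2(1/2)$ of the gap on $[0,1/2]$. Yours is derivative-free. It makes clear that (\ref{elementary ineq lemma, eq 3}) is just Jensen's inequality, valid for all $r\in[0,1)$. It also shows that the constant $\log 2$ in (\ref{elementary ineq lemma, eq 2}) is generous enough for a pure sign comparison.
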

\begin{proof}
    To prove (\ref{elementary ineq lemma, eq 1}), define
    \begin{equation*}
        h_{1}(r)\coloneqq a\log(1-r)-\log (1-a r)+2\lambda r.
    \end{equation*}
    Direct calculation gives
    \begin{align*}
        h_{1}^{'}(r) &= (1-a r)^{-1}(1-r)^{-1}\left[2\lambda (1-r-a r+a r^{2})+a r(a-1) \right] \\
        &= (1-a r)^{-1}(1-r)^{-1}(1-a)\left[2(a r^{2}-(a+1)r+1)-a r \right] \\
        &= (1-a r)^{-1}(1-r)^{-1}(1-a)\left[2 a r^{2}-(3a+2)r+2\right],
    \end{align*}
    where the second equality is by $a=1-\lambda$. The sign of $h_{1}^{\prime}(r)$ is the same with that of the function $h_{1,1}(r)\coloneqq 2ar^{2}-(3a+2)r+2$. The minimizer of $h_{1,1}(r)$ is $(3a+2)/(4a)\in \left[1,+\infty\right)$, hence the minimum of $h_{1,1}(r)$ over $r\in [0,1/2]$ is $h_{1,1}(1/2)=1-a>0$. Hence $h_{1}^{\prime}(r)>0$. Noting that $h_{1}(0)=0$, equation (\ref{elementary ineq lemma, eq 1}) is proved.

    To prove (\ref{elementary ineq lemma, eq 2}), define
    \begin{equation*}
        h_{2}(r)\coloneqq \log(1-a r) - a\log(1-r).
    \end{equation*}
    Direct calculation gives
    \begin{align*}
        h_{2}^{'}(r) &= (1-a r)^{-1}(1-r)^{-1}\left[-a(1-r)+a(1-a r) \right] \\
        &= a r (1-a r)^{-1}(1-r)^{-1}(1-a),
    \end{align*}
    and it follows that $h_{2}^{'}(r)>0$. Then
    \begin{equation*}
        \max_{r\in [0,1/2]} h_{2}(r) = h_{2}(1/2) = \log(1-a/2)-a\log(1/2)\leq 0+a\log 2\leq \log 2,
    \end{equation*}
    hence equation (\ref{elementary ineq lemma, eq 2}) is proved.

    Note that $h_{2}(r)$ is increasing w.r.t. $r$ and $h_{2}(0)=0$, then equation (\ref{elementary ineq lemma, eq 3}) is proved.
\end{proof}

With Lemma \ref{elementary ineq lemma}, the production in (\ref{appendix eq, connect two v's}) admits an estimate with a compact form, which leads to a tractable inequality connecting two elements of $\{v_{t}\}_{t}$ presented as follows.

\begin{lemma}
\label{lemma, aux seq property 1}
    Assume the algorithm is run for $K\in\mathbb{N}^{+}$ epochs. Given a fixed epoch $k\in [K]$,  denote $\mathcal{R}_{k}\coloneqq n A_{k+1}+\beta$ where $A_{k}\coloneqq \sum_{j=k}^{K}\eta_{j}$. For any $r,j\in [n]$ and $r\geq j$, we have
    \begin{equation}
    \label{lemma, aux seq property 1, eq 1}
        \frac{1}{2}\left(\frac{\mathcal{R}_{k}+(n-j)\eta_{k}}{\mathcal{R}_{k}+(n-r)\eta_{k}}\right)^{a}\leq \frac{v_{\tau_{k,r}+1}}{v_{\tau_{k,j}+1}}\leq \left(\frac{\mathcal{R}_{k}+(n-j)\eta_{k}}{\mathcal{R}_{k}+(n-r)\eta_{k}}\right)^{a}.
    \end{equation}
\end{lemma}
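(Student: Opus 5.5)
The plan is to unroll the recursion of Definition \ref{def, auxiliary seq} between the two indices and estimate the resulting product with Lemma \ref{elementary ineq lemma}. Since $v_{t}=v_{t+1}(1-ar_{t})$, for $j\le r$ we have
\begin{equation*}
\frac{v_{\tau_{k,j}+1}}{v_{\tau_{k,r}+1}}=\prod_{t=\tau_{k,j}+1}^{\tau_{k,r}}(1-ar_{t}),
\end{equation*}
with the empty product equal to $1$ when $r=j$, in which case (\ref{lemma, aux seq property 1, eq 1}) is trivial. The quantity to control is therefore $-\sum_{t}\log(1-ar_{t})$ over $t=\tau_{k,j}+1,\dots,\tau_{k,r}$.

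First I compute the $H_{t}$ explicitly. For $t=\tau_{k,s}$ with $s\in[n]$, all remaining steps of epoch $k$ use stepsize $\eta_{k}$. Later epochs contribute $nA_{k+1}$, and the ghost step contributes $\beta$, so
\begin{equation*}
H_{\tau_{k,s}}=\mathcal{R}_{k}+(n-s+1)\eta_{k}.
\end{equation*}
Then $\prod_{t}(1-r_{t})=\prod_{t}H_{t+1}/H_{t}$ telescopes to
\begin{equation*}
\frac{H_{\tau_{k,r}+1}}{H_{\tau_{k,j}+1}}=\frac{\mathcal{R}_{k}+(n-r)\eta_{k}}{\mathcal{R}_{k}+(n-j)\eta_{k}}.
\end{equation*}
This holds also when $r=n$, since $H_{\tau_{k,n}+1}=\mathcal{R}_{k}$, whether $\tau_{k,n}+1$ is the start of epoch $k+1$ or $M$. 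We need $r_{t}\le 1/2$ to apply the lemma. This holds because $H_{t}\ge\alpha_{t}+\beta\ge 2\alpha_{t}$, using $\beta=\eta_{1}\ge\eta_{k}$ by monotonicity of the stepsizes, and this is exactly why the ghost step was given stepsize $\eta_{1}$.

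The upper bound in (\ref{lemma, aux seq property 1, eq 1}) follows from (\ref{elementary ineq lemma, eq 3}):
\begin{equation*}
\prod_{t}(1-ar_{t})\ge\prod_{t}(1-r_{t})^{a}=\Big(\frac{\mathcal{R}_{k}+(n-r)\eta_{k}}{\mathcal{R}_{k}+(n-j)\eta_{k}}\Big)^{a}.
\end{equation*}
Inverting gives $v_{\tau_{k,r}+1}/v_{\tau_{k,j}+1}\le(\cdot)^{a}$.

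For the lower bound I use (\ref{elementary ineq lemma, eq 1}), which gives $\prod_{t}(1-ar_{t})\le(\text{ratio})^{a}\exp(2\lambda\sum_{t}r_{t})$. It then suffices to show $2\lambda\sum_{t}r_{t}\le\log 2$. This is the main obstacle, because the bound must not depend on the epoch or on the stepsizes. I bound
\begin{equation*}
\sum_{t}r_{t}\le\sum_{s=j+1}^{r}\frac{\eta_{k}}{\mathcal{R}_{k}+(n-s+1)\eta_{k}}\le\sum_{m=1}^{n}\frac{1}{m+1}\le\log(n+1),
\end{equation*}
using $\mathcal{R}_{k}\ge\beta\ge\eta_{k}$. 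Since $\lambda=1/(\log M+1)$ and $M=nK+1\ge n+1$, this gives $\lambda\sum_{t}r_{t}\le 1$. That only yields the constant $e^{-2}$, not $1/2$.

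To reach $1/2$ I would split the product. The term with the largest $r_{t}$ is at most $1/2$, and for it I use (\ref{elementary ineq lemma, eq 2}) at cost $\log 2$. For the remaining terms I would try to show that the factor $2\lambda\sum_{t}r_{t}$ is negligible. If this tighter accounting fails, the fallback is to accept the constant $e^{-2}$ (or $e^{-2}/2$). Only the order of the constant matters for the later use with $C=3e^{3}$, so the $1/2$ in the statement should be achievable with this harmonic-sum bound together with careful constants.
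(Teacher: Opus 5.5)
\paragraph{Where the proposal falls short.} Your reduction and your upper bound are correct and match the paper's argument. You unroll $v_t=v_{t+1}(1-ar_t)$, telescope $\prod_s(1-r_{\tau_{k,s}})=H_{\tau_{k,r}+1}/H_{\tau_{k,j+1}}$, get $r_t\le\frac12$ from $\beta=\eta_1\ge\eta_k$, and apply (\ref{elementary ineq lemma, eq 3}).

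The gap is the lower bound. You do not obtain the stated constant $\frac12$, and neither of your proposed routes can. Set $h(r)\coloneqq\log(1-ar)-a\log(1-r)\ge 0$. Since $(1-ar)^{-1}=(1-r)^{-a}e^{-h(r)}$, the lower bound is exactly the claim $\sum_{s=j+1}^{r}h(r_{\tau_{k,s}})\le\log 2$.
\begin{itemize}
\item Your split spends $\log 2$ on the largest factor via (\ref{elementary ineq lemma, eq 2}). That uses up the entire budget, so any positive bound on the remaining terms pushes the total past $\log 2$.
\item The bound $h(r)\le 2\lambda r$ from (\ref{elementary ineq lemma, eq 1}) loses a genuine constant. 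For $K=1$ one has $\mathcal{R}_1=\beta=\eta_1$ and $r_{\tau_{1,s}}=1/(n-s+2)$. With $j=1$, $r=n$ this gives $2\lambda\sum_s r_{\tau_{1,s}}=2\big(\sum_{m=2}^{n}\tfrac1m\big)/(\log(n+1)+1)\to 2$. So this route cannot do better than (asymptotically) $e^{-2}$. Removing the largest term, which contributes only $\lambda$, changes nothing.
\item Your fallback $e^{-2}$ is not harmless. The proof of (\ref{lemma, aux seq property, eq 3}) uses exactly this lower bound and needs $24\le C/2$. With $e^{-2}$ in place of $\frac12$ it would need $C\ge 24e^{2}>3e^{3}$.
\end{itemize}

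\paragraph{The missing idea.} The defect $h$ is quadratic, not linear, near $0$. We have $h(0)=0$ and
\begin{equation*}
h'(\rho)=\frac{a\lambda\rho}{(1-a\rho)(1-\rho)}\le 4a\lambda\rho \quad \text{for } \rho\in[0,\tfrac12],
\end{equation*}
so $h(r)\le 2a\lambda r^{2}\le r^{2}/2$, because $a\lambda=a(1-a)\le\frac14$. Combine this with your own bound $r_{\tau_{k,s}}\le 1/(n-s+2)$, where the denominators are distinct integers $m\ge 2$. Then
\begin{equation*}
\sum_{s}h(r_{\tau_{k,s}})\le\frac12\sum_{m\ge 2}m^{-2}=\frac12\Big(\frac{\pi^{2}}{6}-1\Big)<\frac13<\log 2,
\end{equation*}
which proves the lemma, even with $e^{-1/3}$ in place of $\frac12$.

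\paragraph{Comparison with the paper.} The paper's proof applies (\ref{elementary ineq lemma, eq 2}) to each factor and then keeps a single $\frac12$ for the whole product. That step is not valid when $r-j\ge 2$: it only yields $2^{-(r-j)}$. So you were right that a per-factor $\log 2$ cannot be afforded. The quadratic estimate above is what actually closes the lemma.
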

\begin{proof}
    For any $t=(k-1)n+i$ where $k\in [K]$ and $i\in [n]$, by Definition \ref{def, auxiliary seq},
    \begin{equation*}
        r_{t} = \alpha_{t}/H_{t} \leq \alpha_{t}/(\alpha_{t}+\beta) = \eta_{k}/(\eta_{k}+\eta_{1})\leq 1/2,
    \end{equation*}
    hence $r_{t}\in [0,1/2]$. Then Lemma \ref{elementary ineq lemma} is applicable with the variable $r$ replaced by $r_{t}$, $t\in [T]$. By Definition \ref{def, auxiliary seq},
    \begin{align*}
        \log \frac{v_{\tau_{k,r}+1}}{v_{\tau_{k,j}+1}} &= \log \frac{v_{\tau_{k,r}+1}}{v_{\tau_{k,r}+1}\prod_{s=j+1}^{r}(1-a r_{\tau_{k,s}})} = - \sum_{s=j+1}^{r}\log(1-a r_{\tau_{k,s}}) \\
        &\leq -a \sum_{s=j+1}^{r}\log(1-r_{\tau_{k,s}}) = -a\log \prod_{s=j+1}^{r}\frac{H_{\tau_{k,s}+1}}{H_{\tau_{k,s}}} \\
        &= -a\log \frac{H_{\tau_{k,r}+1}}{H_{\tau_{k,j+1}}} = -a\log \frac{\mathcal{R}_{k}+(n-r)\eta_{k}}{\mathcal{R}_{k}+(n-j)\eta_{k}},
    \end{align*}
    where first inequality is by (\ref{elementary ineq lemma, eq 3}), the third equality is by $1-r_{t}=H_{t+1}/H
    _{t}$. This proves the latter part of equation (\ref{lemma, aux seq property 1, eq 1}). On the other hand, by (\ref{elementary ineq lemma, eq 2}),
    \begin{equation*}
        \left(1-a r_{t}\right)^{-1} \geq \frac{1}{2}\left(1-r_{t}\right)^{-a}.
    \end{equation*}
    Using this inequality, we have
    \begin{align*}
        \frac{v_{\tau_{k,r}+1}}{v_{\tau_{k,j}+1}} &= \frac{v_{\tau_{k,r}+1}}{v_{\tau_{k,r}+1}\prod_{s=j+1}^{r}(1-a r_{\tau_{k,s}})} = \prod_{s=j+1}^{r}(1-a r_{\tau_{k,s}})^{-1} \\
        &\geq \frac{1}{2}\prod_{s=j+1}^{r}(1-r_{\tau_{k,s}})^{-a} = \frac{1}{2} \left(\frac{\mathcal{R}_{k}+(n-j)\eta_{k}}{\mathcal{R}_{k}+(n-r)\eta_{k}} \right)^{a}.
    \end{align*}
    This proves the former part of equation (\ref{lemma, aux seq property 1, eq 1}).
\end{proof}

With Lemma \ref{lemma, aux seq property 1}, we can prove the following properties of $\{v_{t}\}_{t}$. We provide an overview of the proofs to facilitate understanding. Proving the first two equations reduces to relating the elements $v_{1}$ and $v_{t+1}$ to $v_{M}$ (or equivalently, $v_{M+1}$). Proving the third equation reduces to relating $v_{\tau_{k,j}}$ to $v_{\tau_{k,r}+1}, r\in\{j,j+1,\cdots, n\}$. For the last equation, by relating the $v_{t^{\prime}}$'s in the LHS to $v_{\tau_{k,1}+1}$ and relating the $v_{t^{\prime\prime}}$'s in the RHS to $v_{\tau_{k,1}+1}$ respectively, we can  derive more tractable upper bound of the LHS and lower bound of the RHS. Proving the last equation then reduces to proving the aforementioned upper bound dominates the lower bound.

\begin{lemma}
\label{lemma, aux seq property}
    The sequence $\{v_{t}\}_{t}$ has the following properties:
    \begin{align}
    \label{lemma, aux seq property, eq 1}
        \frac{v_{1}}{\lambda \beta} \leq \frac{C}{H_{1}}, \ &\frac{v_{t+1}}{\lambda \beta} \leq \frac{C}{H_{t}}, \\
    \label{lemma, aux seq property, eq 2}
        \frac{1}{n}\sum_{r=j}^{n} v_{\tau_{k,r}+1} &\leq C\Lambda v_{\tau_{k,j}}, \ j\in [n], \\
    \label{lemma, aux seq property, eq 3}
        \sum_{j=1}^{n}v_{\tau_{k,j}+1}(j-1) &\leq C\Lambda \sum_{j=1}^{n} v_{\tau_{k,j}+1}\frac{(j-1)(n-j+1)}{n-1},
    \end{align}
    where $C$ is an absolute numerical constant defined as follows
    \begin{equation*}
        C \coloneqq \max \left\{3e^{3},2,50 \right\} = 3e^{3}.
    \end{equation*}
\end{lemma}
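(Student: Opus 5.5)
The plan is to reduce all three inequalities to explicit power-type expressions in the stepsizes, using the telescoping identity $\prod(1-r_j)=H_{t_2}/H_{t'}$ together with Lemma \ref{elementary ineq lemma} and Lemma \ref{lemma, aux seq property 1}.

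\textbf{Step 1: inequality (\ref{lemma, aux seq property, eq 1}).} First note that $r_M=\alpha_M/H_M=1$, so $v_M=v_{M+1}(1-a)=\lambda$. Hence for $t\le M-1$ we have $v_{t+1}=\lambda\prod_{s=t+1}^{M-1}(1-ar_s)$, and for $s\le T$ Lemma \ref{lemma, aux seq property 1} already checks $r_s\le 1/2$. Apply (\ref{elementary ineq lemma, eq 1}) to each factor. The $a\log(1-r_s)$ parts telescope to $a\log(\beta/H_{t+1})$. For the error term, $r\le-\log(1-r)$ gives $\sum_s r_s\le\log(H_{t+1}/\beta)\le\log M$, using $\alpha_s\le\eta_1=\beta$ and hence $H_{t+1}\le M\beta$. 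So $2\lambda\sum_s r_s\le 2$, and
\[
v_{t+1}\le e^{2}\lambda\Big(\frac{\beta}{H_{t+1}}\Big)^{a}=e^{2}\lambda\,\frac{\beta}{H_{t+1}}\Big(\frac{H_{t+1}}{\beta}\Big)^{\lambda}\le e^{3}\lambda\,\frac{\beta}{H_{t+1}},
\]
since $M^{\lambda}\le e$. Finally $H_t=\alpha_t+H_{t+1}\le 2H_{t+1}$ (because $\alpha_t\le\beta\le H_{t+1}$), giving $v_{t+1}\le 2e^3\lambda\beta/H_t\le C\lambda\beta/H_t$. The claim for $v_1$ follows from $v_1=(1-ar_1)v_2\le v_2$ combined with the same computation, which yields a bound in terms of $H_2$ and then $H_1\le 2H_2$; the case $t=M$ is immediate from $v_{M+1}=1$ and $H_M=\beta$ ($1/\lambda\le C$).

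\textbf{Step 2: inequality (\ref{lemma, aux seq property, eq 2}).} Write $v_{\tau_{k,j}}=(1-ar_{\tau_{k,j}})v_{\tau_{k,j}+1}\ge\tfrac12 v_{\tau_{k,j}+1}$ since $r\le 1/2$. Then bound each $v_{\tau_{k,r}+1}$, $r\ge j$, by the upper half of Lemma \ref{lemma, aux seq property 1}. Set $s\coloneqq\mathcal{R}_k/\eta_k\ge 1$ (as $\mathcal{R}_k\ge\beta\ge\eta_k$), $N\coloneqq n-j$ and $m\coloneqq n-r$. The sum becomes
\[
(s+N)^{a}\sum_{m=0}^{N}(s+m)^{-a}\le(s+N)^{a}\cdot\frac{(s+N)^{1-a}}{1-a},
\]
by comparison with $\int_{s-1}^{s+N}x^{-a}\,dx$. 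The right side equals $\Lambda(s+N)$. This gives a factor of order $\Lambda N$ whenever $s\lesssim N+1$. When $s\gg N$ all ratios are at most $2$ and the sum is at most $2(N+1)$. Dividing by $n$ yields $\le 2\Lambda\cdot O(1)\cdot v_{\tau_{k,j}+1}\le C\Lambda v_{\tau_{k,j}}$.

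\textbf{Step 3: inequality (\ref{lemma, aux seq property, eq 3}); this is the main obstacle.} Set $g(j)\coloneqq\big((\mathcal{R}_k+(n-1)\eta_k)/(\mathcal{R}_k+(n-j)\eta_k)\big)^{a}$. By Lemma \ref{lemma, aux seq property 1} (with $j=1$), $v_{\tau_{k,j}+1}$ lies between $\tfrac12 g(j)$ and $g(j)$ times $v_{\tau_{k,1}+1}$. It therefore suffices to prove
\[
\sum_j g(j)(j-1)\le\tfrac{C\Lambda}{2}\sum_j g(j)(j-1)(n-j+1)/(n-1).
\]
The indices $j\le n/2$ are harmless, since there $(n-j+1)/(n-1)\ge 1/2$. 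The difficulty is $j>n/2$, where the right-hand weight degenerates and $g$ is largest. For these indices write $m=n-j$, so $g\propto(s+m)^{-a}$ with $s\ge1$. The left tail is at most $n\sum_{m<n/2}(s+m)^{-a}\le n\Lambda(s+n)^{1-a}$ by the integral estimate of Step 2. Because the right side is a sum over all $j$, its $j>n/2$ part alone is at least $\tfrac14\sum_{m<n/2}(m+1)(s+m)^{-a}$. In the case $s\le n$, this is $\gtrsim n^{2}(s+n)^{-a}$, which dominates $n(s+n)^{1-a}/\Lambda$ up to the factor $\Lambda$. In the case $s>n$, all $(s+m)^{-a}$ agree within a factor $2$, and the comparison reduces to $\sum_j(j-1)\lesssim\sum_j(j-1)(n-j+1)/(n-1)$, which holds with a constant. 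I expect tracking the constants in this case split to be the delicate part, but the target constant $50\le C$ leaves ample room.
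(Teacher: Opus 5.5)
Your route is sound and largely follows the paper. Step 1 is the paper's computation; you bound $v_1\le v_2$ instead of treating $t=1$ directly, which costs only a factor $2$. In Step 2 you are more careful than the paper: it calls the passage from $v_{\tau_{k,j}}$ to $v_{\tau_{k,j}+1}$ an ``equivalence'', whereas you correctly pay the factor from $v_{\tau_{k,j}}\ge\tfrac12 v_{\tau_{k,j}+1}$. Your cruder integral bound $\Lambda(s+N)$, which drops the lower limit, forces a case split. The paper avoids this by keeping $((\mathcal{R}_k+(n-j)\eta_k)^{\lambda}-\mathcal{R}_k^{\lambda})/\lambda$ and using $1-y^{\lambda}\le 1-y$. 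In Step 3 you use the same reduction via Lemma \ref{lemma, aux seq property 1} but split the indices at $n/2$. The paper instead compares globally: $I_3\le n d_k^{-a}+n\Lambda(d_k+n-1)^{\lambda}$ against $I_4\ge (d_k+n-1)^{-a}n(n+1)/6$. Two points need repair.

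\textbf{The constant in Step 3.} You correctly reduce to the constant $C\Lambda/2=1.5e^{3}\Lambda\approx 30.1\,\Lambda$, but the estimates you wrote do not achieve it, so the room is not ample. You have right tail $\ge\tfrac14\sum_{m<n/2}(m+1)(s+m)^{-a}\ge\tfrac{n(n+2)}{32}(s+n)^{-a}$, left tail $\le n\Lambda(s+n)^{1-a}$, and $s+n\le 2n$. These give the tail ratio $32\Lambda(s+n)/(n+2)$, which approaches $64\Lambda$. Since $C$ is a fixed number used downstream, you must sharpen as follows.
\begin{itemize}
\item For $j>n/2$ one has $(j-1)/(n-1)\ge\tfrac12$, so the factor is $\tfrac12$, not $\tfrac14$.
\item With $m_0=\lceil n/2\rceil-1$, use $\sum_{m=0}^{m_0}(n-1-m)(s+m)^{-a}\le(n-1)\Lambda(s+m_0)^{\lambda}$, together with $s+m_0\le(3n-1)/2$ when $s\le n$ and $(m_0+1)(m_0+2)\ge n(n+2)/4$.
\end{itemize}
The tail ratio then becomes at most $8(n-1)(3n-1)\Lambda/(n(n+2))\le 24\Lambda$. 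Combined with the factor $2$ for $j\le n/2$ and the constant $6$ in the case $s>n$, this yields $24\Lambda\le C\Lambda/2$, the same constant the paper obtains.

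\textbf{The case $t=M$.} Your claim that this case of the second inequality in (\ref{lemma, aux seq property, eq 1}) is immediate because $1/\lambda\le C$ is false. Here $1/\lambda=\Lambda=\log(nK+1)+1$ is unbounded. Indeed $v_{M+1}/(\lambda\beta)=\Lambda/\beta$ exceeds $C/H_M=C/\beta$ as soon as $\Lambda>C$. That inequality holds only for $t\le T=M-1$, which is exactly what your computation, and the paper's, establishes. Restrict to that range rather than asserting the extra case.
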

\begin{proof}
    By Definition \ref{def, auxiliary seq}, $v_{M}=v_{M+1}(1-a \alpha_{M}/H_{M})=\lambda v_{M+1}=\lambda$. Then
    \begin{equation}
    \label{lemma, aux seq property, eq 4}
        v_{t} = v_{t+1}(1-a r_{t}) = v_{M} \prod_{s=t}^{M-1}(1- a r_{s}) = \lambda \prod_{s=t}^{M-1}(1-a r_{s}).
    \end{equation}
    We have
    \begin{equation}
    \label{lemma, aux seq property, eq 5}
        \log\left(\prod_{s=t}^{M-1}(1-a r_{s}) \right) = \sum_{s=t}^{M-1}\log(1-a r_{s})\leq a\sum_{s=t}^{M-1}\log(1-r_{s})+2\lambda \sum_{s=t}^{M-1}r_{s},
    \end{equation}
    where the inequality is by (\ref{elementary ineq lemma, eq 1}). For the first term in the RHS of (\ref{lemma, aux seq property, eq 5}), we have
    \begin{equation}
    \label{lemma, aux seq property, eq 6}
        a\sum_{s=t}^{M-1}\log(1-r_{s})\leq a\log\left(\prod_{s=t}^{M-1}\frac{H_{s+1}}{H_{s}} \right) = \log\left(\frac{\beta}{H_{t}} \right)^{a}.
    \end{equation}
    For the second term in the RHS of (\ref{lemma, aux seq property, eq 5}), we have
    \begin{equation}
    \label{lemma, aux seq property, eq 7}
        2\lambda \sum_{s=t}^{M-1} r_{s} \leq 2\lambda \sum_{s=t}^{M-1}-\log (1-r_{s}) = 2\lambda \log \left(\frac{H_{t}}{\beta}\right),
    \end{equation}
    where the inequality is by $r\leq -\log(1-r), r\in [0,1/2]$. By (\ref{lemma, aux seq property, eq 4}), (\ref{lemma, aux seq property, eq 5}), (\ref{lemma, aux seq property, eq 6}), and (\ref{lemma, aux seq property, eq 7}),
    \begin{equation}
    \label{lemma, aux seq property, eq 8}
        v_{t} \leq \lambda \left(\frac{\beta}{H_{t}} \right)^{a} \exp \left(2\lambda \log\frac{H_{t}}{\beta} \right).
    \end{equation}
    Since the stepsize sequence $\{\eta_{k}\}_{k}$ is non-increasing, we have $H_{t}/\beta = H_{t}/\eta_{1}\leq M$. By Definition \ref{def, auxiliary seq}, $\lambda \log M = (1+\log M)^{-1}\log M\leq 1$. Hence the exponential term in (\ref{lemma, aux seq property, eq 8}) is upper bounded by $e^{2}$. Then
    \begin{equation}
    \label{lemma, aux seq property, eq 9}
        v_{t} \leq e^{2} \lambda \left(\frac{\beta}{H_{t}} \right)^{a}.
    \end{equation}
    When $t=1$, (\ref{lemma, aux seq property, eq 9}) gives
    \begin{equation}
    \label{lemma, aux seq property, eq 10}
        \frac{v_{1}}{\lambda \beta}\leq \frac{e^{2}}{H_{1}}\frac{H_{1}}{\beta} \left(\frac{\beta}{H_{1}} \right)^{a} = \frac{e^{2}}{H_{1}} \left(\frac{H_{1}}{\beta} \right)^{\lambda}\leq \frac{e^{2}}{H_{1}} \left(\frac{M\beta}{\beta} \right)^{\lambda }\leq \frac{e^{3}}{H_{1}},
    \end{equation}
    where the last inequality is by $M^{\lambda}\leq e$. Equation (\ref{lemma, aux seq property, eq 9}) also gives
    \begin{equation}
    \label{lemma, aux seq property, eq 11}
        \frac{v_{t+1}}{\lambda \beta}\leq \frac{e^{2}}{H_{t}}\frac{H_{t}}{\beta} \left(\frac{\beta}{H_{t}} \right)^{a} \left(\frac{H_{t}}{H_{t+1}} \right)^{a}.
    \end{equation}
    Note that
    \begin{equation*}
        \frac{H_{t+1}}{H_{t}} = 1-\frac{\alpha_{t}}{H_{t}} = 1-r_{t} \geq \frac{1}{2},
    \end{equation*}
    hence $H_{t}/H_{t+1}\leq 2$. Then (\ref{lemma, aux seq property, eq 11}) gives
    \begin{equation}
    \label{lemma, aux seq property, eq 12}
        \frac{v_{t+1}}{\lambda \beta} \leq 2^{a}e \frac{e^{2}}{H_{t}} \leq \frac{2e^{3}}{H_{t}}.
    \end{equation}
    Hence (\ref{lemma, aux seq property, eq 1}) holds by (\ref{lemma, aux seq property, eq 10}) and (\ref{lemma, aux seq property, eq 12}). 
    
    Equation (\ref{lemma, aux seq property, eq 2}) is equivalent to
    \begin{equation*}
        \frac{1}{n}\sum_{r=j}^{n} \frac{v_{\tau_{k,r}+1}}{v_{\tau_{k,j}+1}}\leq C \Lambda.
    \end{equation*}
    By (\ref{lemma, aux seq property 1, eq 1}),
    \begin{equation}
    \label{lemma, aux seq property, eq 13}
        \frac{1}{n}\sum_{r=j}^{n} \frac{v_{\tau_{k,r}+1}}{v_{\tau_{k,j}+1}}\leq \frac{1}{n} \sum_{r=j}^{n} \left(\frac{\mathcal{R}_{k}+(n-j)\eta_{k}}{\mathcal{R}_{k}+(n-r)\eta_{k}} \right)^{a} = \frac{(\mathcal{R}_{k}+(n-j)\eta_{k})^{a}}{n}\sum_{q=0}^{n-j}\left(\mathcal{R}_{k}+q\eta_{k} \right)^{-a}.
    \end{equation}
    Define $h_{3}(x)\coloneqq (\mathcal{R}_{k}+x\eta_{k})^{-a}$. Since $h_{3}(x)$ is decreasing on $[0,+\infty)$, we have
    \begin{equation*}
        \sum_{q=0}^{n-j} h_{3}(q)\leq h_{3}(0)+\int_{0}^{n-j} h_{3}(x) \ {\rm d}x.
    \end{equation*}
    Then
    \begin{align*}
        \sum_{q=0}^{n-j}\left(\mathcal{R}_{k}+q\eta_{k} \right)^{-a} &\leq \mathcal{R}_{k}^{-a}+\int_{0}^{n-j} \left(\mathcal{R}_{k}+x\eta_{k} \right)^{-a}  {\rm d}x \\
        &= \mathcal{R}_{k}^{-a}+\frac{\left(\mathcal{R}_{k}+(n-j)\eta_{k}\right)^{\lambda}-\mathcal{R}_{k}^{\lambda}}{\lambda \eta_{k}}.
    \end{align*}
    Plugging the above inequality into (\ref{lemma, aux seq property, eq 13}) gives
    \begin{equation}
    \label{lemma, aux seq property, eq 14}
        \frac{1}{n}\sum_{r=j}^{n}\frac{v_{\tau_{k,r}+1}}{v_{\tau_{k,j}+1}}\leq I_{1}+I_{2},
    \end{equation}
    where
    \begin{equation*}
        I_{1}\coloneqq \frac{\left(\mathcal{R}_{k}+(n-j)\eta_{k} \right)^{a} \mathcal{R}_{k}^{-a}}{n}, \ I_{2}\coloneqq \frac{\left(\mathcal{R}_{k}+(n-j)\eta_{k} \right)^{a}\left[\left(\mathcal{R}_{k}+(n-j)\eta_{k}\right)^{\lambda}-\mathcal{R}_{k}^{\lambda}\right]}{n\lambda \eta_{k}}.
    \end{equation*}
    We have
    \begin{equation}
    \label{lemma, aux seq property, eq 15}
        I_{1} = \frac{1}{n}\left(1+\frac{(n-j)\eta_{k}}{\mathcal{R}_{k}} \right)^{a} \leq \frac{(1+n-j)^{a}}{n} \leq \frac{n^{a}}{n} \leq 1,
    \end{equation}
    where the first inequality is by $\mathcal{R}_{k}\geq \beta \geq \eta_{k}$. Denote $\tilde{\mathcal{R}}_{k}\coloneqq \mathcal{R}_{k}+(n-j)\eta_{k}$, then
    \begin{equation}
    \label{lemma, aux seq property, eq 16}
        I_{2} = \frac{\tilde{R}_{k}^{a+\lambda} \left[1-\left(\mathcal{R}_{k}/\tilde{\mathcal{R}_{k}}\right)^{\lambda} \right]}{n\lambda \eta_{k}} \leq \frac{\tilde{\mathcal{R}_{k}}\left(1-\mathcal{R}_{k}/\tilde{\mathcal{R}_{k}}\right)}{n\lambda \eta_{k}} = \frac{(n-j)\eta_{k}}{n\lambda \eta_{k}} \leq \lambda^{-1} = \Lambda,
    \end{equation}
    where the first inequality is by $\tilde{\mathcal{R}}_{k}=\mathcal{R}_{k}+(n-j)\eta_{k}\geq \mathcal{R}_{k}$ and $\lambda \in (0,1)$, and the second equality is by $\tilde{\mathcal{R}}_{k}-\mathcal{R}_{k}=(n-j)\eta_{k}$. Noting that $1+\Lambda \leq 2\Lambda$, equations (\ref{lemma, aux seq property, eq 14}), (\ref{lemma, aux seq property, eq 15}), and (\ref{lemma, aux seq property, eq 16}) imply (\ref{lemma, aux seq property, eq 2}). 
    
    Dividing both sides of (\ref{lemma, aux seq property, eq 3}) by $v_{\tau_{k,1}+1}$ and using (\ref{lemma, aux seq property 1, eq 1}), one sees that it suffices to prove the following inequality to prove (\ref{lemma, aux seq property, eq 3})
    \begin{equation*}
        \sum_{j=1}^{n} \left(\mathcal{R}_{k}+(n-j)\eta_{k} \right)^{-a}(j-1) \leq 2^{-1}C\Lambda \sum_{j=1}^{n} \left(\mathcal{R}_{k}+(n-j)\eta_{k} \right)^{-a} \frac{(j-1)(n-j+1)}{n-1},
    \end{equation*}
    where the common factor $(\mathcal{R}_{k}+(n-1)\eta_{k})^{a}$ of both sides is canceled and hence omitted. Denote $d_{k}\coloneqq \mathcal{R}_{k}/\eta_{k}$, then the above inequality is equivalent to
    \begin{equation}
    \label{lemma, aux seq property, eq 17}
        \sum_{j=1}^{n} \left(d_{k}+n-j\right)^{-a}(j-1) \leq C\Lambda \sum_{j=1}^{n} \left(d_{k}+n-j \right)^{-a} \frac{(j-1)(n-j+1)}{n-1}.
    \end{equation}
    For notational convenience, define
    \begin{equation*}
        I_{3}\coloneqq \sum_{j=1}^{n} \left(d_{k}+n-j\right)^{-a}(j-1), \ I_{4}\coloneqq  \sum_{j=1}^{n} \left(d_{k}+n-j \right)^{-a} \frac{(j-1)(n-j+1)}{n-1}.
    \end{equation*}
    We consider two cases to prove (\ref{lemma, aux seq property, eq 17}). In the case where $d_{k}\geq n-1$, 
    we have $d_{k}\leq d_{k}+n-j\leq 2d_{k}$, then
    \begin{equation}
    \label{lemma, aux seq property, eq 18}
        I_{3} \leq d_{k}^{-a}\sum_{j=1}^{n}(j-1)=d_{k}^{-a}\frac{n(n-1)}{2}.
    \end{equation}
    Also note that
    \begin{align*}
        I_{4} &\geq (2d_{k})^{-a}\frac{1}{n-1}\sum_{j=1}^{n} \left[n(j-1)-(j-1)^{2}\right] \\
        &= (2d_{k})^{-a}(n-1)^{-1}\left(\frac{n^{2}(n-1)}{2}-\frac{n(n-1)(2n-1)}{6} 
        \right) \\
        &\geq (1/2) d_{k}^{-a}(n-1)^{-1} \frac{1}{6}n(n-1)(n+1)\geq \frac{d_{k}^{-a}}{6} \frac{n(n-1)}{2} \\
        &\geq \frac{1}{6}\sum_{j=1}^{n} \left(d_{k}+n-j \right)^{-a}(j-1),
    \end{align*}
    where the last inequality is by (\ref{lemma, aux seq property, eq 18}). Hence in this case, we have
    \begin{equation}
    \label{lemma, aux seq property, eq 19}
        I_{3} \leq 6 I_{4}, \ {\rm when} \ d_{k}\geq n-1.
    \end{equation}
    Note that $\mathcal{R}_{k}$ contains the summand $\beta=\eta_{1}\geq \eta_{k}$, we always have $d_{k}\geq 1$. Consider the case where $1\leq d_{k}<n-1$. In this case,
    \begin{align*}
        I_{3} &\leq n \sum_{j=1}^{n} (d_{k}+n-j)^{-a} = n d_{k}^{-a} + \sum_{q=1}^{n-1}(d_{k}+q)^{-a} \\
        &\leq nd_{k}^{-a} + \int_{0}^{n-1}\left(d_{k}+x \right)^{-a} {\rm d}x = nd_{k}^{-a}+n\frac{(d_{k}+n-1)^{\lambda}-d_{k}^{\lambda}}{\lambda} \\
        &\leq nd_{k}^{-a}+n\Lambda \left(d_{k}+n-1 \right)^{\lambda}.
    \end{align*}
    Note that $(d_{k}+n-j)^{-a}\geq (d_{k}+n-1)^{-a}$ for $j\in [n]$, then
    \begin{align*}
        I_{4} &\geq (d_{k}+n-1)^{-a} \sum_{j=1}^{n} \frac{(j-1)(n-j+1)}{n-1}  \\
        &\geq (d_{k}+n-1)^{-a} \sum_{j=1}^{n} \frac{1}{n-1} \left[n(j-1)-(j-1)^{2}\right] \\
        &= \frac{1}{6}(d_{k}+n-1)^{-a}n(n+1).
    \end{align*}
    To bound $I_{3}$ with $I_{4}$, it suffices to consider the following quantity
    \begin{equation}
    \label{lemma, aux seq property, eq 20}
        \frac{nd_{k}^{-a}+n\Lambda \left(d_{k}+n-1 \right)^{\lambda}}{6^{-1}\left(d_{k}+n-1 \right)^{-a}n(n+1)} = 6\left(I_{5}+I_{6} \right),
    \end{equation}
    where
    \begin{equation}
    \label{lemma, aux seq property, eq 21}
        I_{5}\coloneqq \left(\frac{d_{k}+n-1}{d_{k}} \right)^{a}\frac{1}{n+1}, \ I_{6}\coloneqq \Lambda \frac{d_{k}+n-1}{n+1}.
    \end{equation}
    Note that in this case $1\leq d_{k} < n-1$, we have
    \begin{equation}
    \label{lemma, aux seq property, eq 22}
        I_{5} = \left(1+\frac{n-1}{d_{k}} \right)^{a}\frac{1}{n+1} \leq 2^{a}\frac{\left(n-1 \right)^{a}}{n+1} \leq 2 \leq 2\Lambda.
    \end{equation}
    Also note that
    \begin{equation}
    \label{lemma, aux seq property, eq 23}
        I_{6} \leq \Lambda \frac{2(n-1)}{n+1} \leq 2\Lambda.
    \end{equation}
    Combining (\ref{lemma, aux seq property, eq 20}), (\ref{lemma, aux seq property, eq 21}), (\ref{lemma, aux seq property, eq 22}), and (\ref{lemma, aux seq property, eq 23}) gives 
    \begin{equation}
    \label{lemma, aux seq property, eq 24}
        I_{3} \leq 24\Lambda I_{4}, \ {\rm when} \ 1\leq d_{k}<n-1.
    \end{equation}
    Combining (\ref{lemma, aux seq property, eq 19}) and (\ref{lemma, aux seq property, eq 24}) gives
    \begin{equation*}
        I_{3} \leq 24\Lambda I_{4}.
    \end{equation*}
    Plugging the definition of $I_{3}$ and $I_{4}$ into the above equation, we have
    \begin{equation*}
        \sum_{j=1}^{n} \left(d_{k}+n-j \right)^{-a}(j-1) \leq 24\Lambda \sum_{j=1}^{n}\left(d_{k}+n-j \right)^{-a}\frac{(j-1)(n-j+1)}{n-1},
    \end{equation*}
    which proves (\ref{lemma, aux seq property, eq 3}).
\end{proof}

\subsection{Permutations generated by swapping elements and the non-expansiveness of the gradient descent step}

The following notions are needed in subsequent proofs. To the best of our knowledge, these notions can be traced back to \cite{nagaraj2019sgd}, and are used by subsequent works, e.g., \cite{liu2026random}. Consider a permutation $\pi$ of $[n]$ and two indices $i,j\in [n]$ with $j\leq i$, a new permutation is defined as follows
    \begin{equation*}
        \pi(i,j) \coloneqq \left(\pi^{1},\cdots, \pi^{j-1},\pi^{i},\pi^{j+1},\cdots, \pi^{i-1},\pi^{j},\pi^{i+1},\cdots, \pi^{n} \right).
    \end{equation*}
Following the description of \cite{liu2026random}, $\pi(i,j)$ is the permutation generated by exchanging the elements $\pi_{i}$ and $\pi_{j}$ in $\pi$. Similarly, for $i,j\in [n]$ with $j>i$, $\pi(i,j)$ is defined as the permutation generated by exchanging the elements $\pi_{i}$ and $\pi_{j}$ in $\pi$. With the notion of $\pi(i,j)$ defined above, a virtual sequence is defined as follows 
    \begin{equation}
    \label{def, in line, virtual seq, eq 1}
        x_{k}^{l+1}(i,j) \coloneqq x_{k}^{l}(i,j)-\eta_{k}\nabla f_{\pi_{k}^{l}(i,j)}(x_{k}^{l}(i,j)), \ \forall l\in [n], \ {\rm where} \ x_{k}^{1}(i,j)\coloneqq x_{k}^{1}.
    \end{equation}
In words, the sequence $\{x_{k}^{l}(i,j)\}_{l\in [n+1]}$ is generated by the RR method, but under the permutation $\pi_{k}(i,j)$. Based on this virtual sequence, we further define
    \begin{equation}
    \label{def, in line, virtual seq, eq 2}
        d_{k}^{l}(i,j) \coloneqq \eta_{k}\sum_{m=1}^{l-1}q_{\pi_{k}^{m}}, \ u_{k}^{l}(i,j) \coloneqq x_{k}^{l}(i,j)+d_{k}^{l}(i,j), \forall i\in [n+1].
    \end{equation}
Informally, this sequence can be regarded as the adjoint shadow sequence of the virtual sequence $\{x_{k}^{l}(i,j)\}_{l\in [n+1]}$.

The following lemma estimates the distance between $x_{k}^{i}(i,j)$ and $x_{k}^{i}$, whose proof relies on the nonexpansiveness of the update rule in the gradient descent method. To be specific, we perform one gradient descent step starting at $x\in\mathbb{R}^{d}$ and $y\in\mathbb{R}^{d}$ respectively with the same stepsize $\eta\leq 2/L$\footnote{In the following iterations, the function $g:\mathbb{R}^{d}\rightarrow \mathbb{R}$ is convex and $L$-smooth.}
\begin{equation*}
    x^{+} = x - \eta\nabla g(x), \ y^{+} = y - \eta\nabla g(y).
\end{equation*}
Then the distance between $x$ and $y$ does not increase after the iteration, i.e., 
\begin{equation*}
    \Vert x^{+}-y^{+}\Vert \leq \Vert x-y\Vert.
\end{equation*}
Note that $x_{k}^{i}(i,j)$ and $x_{k}^{i}$ have the same starting point $x_{k}^{1}(i,j)=x_{k}^{1}$ (hence the initial distance is $0$), and the distance between the iterates of the corresponding sequences only increases when the gradient descent step is performed w.r.t. different functions, i.e., when the sampled component functions differ. Then the conclusion of the following lemma is obvious, and the reader could refer to the proof of Lemma $3$ in \cite{liu2026random} for a formal proof.

\begin{lemma}
\label{lemma, diff btw virtual and real seq}
     Assume the algorithm is run for $K\in\mathbb{N}^{+}$ epochs and $\eta_{k}\leq 2/L, \forall k\in [K]$. Then for any $k\in [K], i\in [n]$, and $j\in [i-1]$, we have
     \begin{equation*}
         \Vert x_{k}^{i}(i,j)-x_{k}^{i}\Vert \leq \eta_{k} \Vert \nabla f_{\pi_{k}^{i}}(x_{k}^{j})-\nabla f_{\pi_{k}^{j}}(x_{k}^{j})\Vert,
     \end{equation*}
     where $x_{k}(i,j)$ is defined in (\ref{def, in line, virtual seq, eq 1}).
\end{lemma}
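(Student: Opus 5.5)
I will prove the bound by tracking the distance between the two trajectories $\{x_{k}^{l}\}_{l}$ and $\{x_{k}^{l}(i,j)\}_{l}$ within the $k^{\rm{th}}$ epoch, for fixed $k$, $i$ and $j<i$. Both start at $x_{k}^{1}$. The permutation $\pi_{k}(i,j)$ agrees with $\pi_{k}$ in positions $1,\dots,j-1$, so the two trajectories coincide up to index $j$, i.e., $x_{k}^{l}(i,j)=x_{k}^{l}$ for all $l\le j$. In particular $x_{k}^{j}(i,j)=x_{k}^{j}$.

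At position $j$ the two methods use different components: the virtual sequence uses $\pi_{k}^{j}(i,j)=\pi_{k}^{i}$, while the real sequence uses $\pi_{k}^{j}$. Both steps start from the same point $x_{k}^{j}$, so
\begin{equation*}
x_{k}^{j+1}(i,j)-x_{k}^{j+1} = -\eta_{k}\left(\nabla f_{\pi_{k}^{i}}(x_{k}^{j})-\nabla f_{\pi_{k}^{j}}(x_{k}^{j})\right),
\end{equation*}
and its norm is exactly the claimed right-hand side.

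For positions $l=j+1,\dots,i-1$ the two permutations agree again, since $\pi_{k}^{l}(i,j)=\pi_{k}^{l}$ for $j<l<i$. Each step is therefore a gradient step on the same convex $L$-smooth function $g=f_{\pi_{k}^{l}}$ with stepsize $\eta_{k}\le 2/L$. I will invoke the non-expansiveness of $x\mapsto x-\eta\nabla g(x)$ for $\eta\le 2/L$. This follows from co-coercivity, $\langle\nabla g(x)-\nabla g(y),x-y\rangle\ge \frac1L\Vert\nabla g(x)-\nabla g(y)\Vert^2$, which gives
\begin{equation*}
\Vert x^{+}-y^{+}\Vert^2=\Vert x-y\Vert^2-2\eta\langle\nabla g(x)-\nabla g(y),x-y\rangle+\eta^2\Vert\nabla g(x)-\nabla g(y)\Vert^2\le\Vert x-y\Vert^2 .
\end{equation*}
Inducting on $l$ from $j+1$ to $i$ then yields $\Vert x_{k}^{i}(i,j)-x_{k}^{i}\Vert\le\Vert x_{k}^{j+1}(i,j)-x_{k}^{j+1}\Vert$, which is the claim.

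There is no real obstacle. The only care needed is the index bookkeeping: checking that the swap affects position $j$ but not positions strictly between $j$ and $i$, and noting that position $i$ itself is never used to reach $x_{k}^{i}$. The edge case $j=i-1$ needs no induction step at all.
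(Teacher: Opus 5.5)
Your proof is correct and follows the paper's approach. The paper only sketches this lemma: both sequences start at $x_{k}^{1}$, they diverge only at the one step where the sampled components differ, and the gradient step is non-expansive for $\eta_{k}\le 2/L$; it defers the formal proof to Lemma 3 of \cite{liu2026random}. Your index bookkeeping for $\pi_{k}(i,j)$ and your co-coercivity derivation of non-expansiveness supply exactly those details.
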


\subsection{Bounds on component-function deviations}

Equipped with Lemma \ref{lemma, diff btw virtual and real seq}, we are able to prove the following lemma. This lemma bounds $\Omega_{\pi_{k}^{r}}(u_{k}^{p})$,  the difference between a component function $b_{\pi_{k}^{r}}$ and the corresponding sum function $B$ evaluated at the same point $u_{k}^{p}$, where the point $u_{k}^{p}$ and the index $\pi_{k}^{r}$ are dependent r.v.'s. Such quantities are classical in the analysis of RR \cite{nagaraj2019sgd, liu2025improved} and the estimation of them constitutes one of the major challenges in these analyses.

\begin{lemma}
\label{lemma, RR component vs finite-sum}
    Assume the algorithm is run for $K\in\mathbb{N}^{+}$ epochs and consider a fixed epoch  $k\in [K]$. For any given $\varepsilon\in (0,1/4)$, we have
    \begin{equation*}
        \left| \mathbb{E}\Omega_{\pi_{k}^{r}}(u_{k}^{p}) \right| \leq 2\varepsilon \mathbb{E}B(u_{k}^{p}) + \frac{64L^{2}\eta_{k}^{2}}{\varepsilon n} \sum_{1\leq m<p} \mathbb{E} b_{\pi_{k}^{m}}(x_{k}^{m}) + \frac{48L\eta_{k}^{2}(p-1)}{\varepsilon n}\sigma_{*}^{2},
    \end{equation*}
    where $r,p\in [n], r\ge p$.
\end{lemma}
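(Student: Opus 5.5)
The plan is the exchangeable-pairs (swap) argument of \cite{nagaraj2019sgd, liu2026random}, applied to the shadow sequence. Since $u_{k}^{p}$ depends only on $\pi_{k}^{1},\dots,\pi_{k}^{p-1}$ and $x_{k}^{1}$, the index $\pi_{k}^{r}$ with $r\ge p$ is not independent of $u_{k}^{p}$. Conditioned on the history, it is uniform over the set $[n]\setminus\{\pi_{k}^{1},\dots,\pi_{k}^{p-1}\}$ of remaining indices. Write $\mathbb{E}\Omega_{\pi_{k}^{r}}(u_{k}^{p}) = \mathbb{E}\,b_{\pi_{k}^{r}}(u_{k}^{p}) - \mathbb{E}B(u_{k}^{p})$.

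The first step is to express $\mathbb{E}B(u_{k}^{p})$ through the swap. Since $B=\frac1n\sum_{m}b_{m}$, we have
\begin{equation*}
\mathbb{E}B(u_{k}^{p})=\frac1n\sum_{m=1}^{n}\mathbb{E}\,b_{\pi_{k}^{m}}(u_{k}^{p}).
\end{equation*}
For each $m<p$, the permutation $\pi_{k}(r,m)$ has the same law as $\pi_{k}$. Hence $\mathbb{E}\,b_{\pi_{k}^{m}}(u_{k}^{p}) = \mathbb{E}\,b_{\pi_{k}^{r}}(u_{k}^{p}(r,m))$, where $u_{k}^{p}(r,m)$ is the shadow iterate in (\ref{def, in line, virtual seq, eq 2}) built from the swapped permutation. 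Here I would take the $d$-part to use the swapped indices as well, since that is what exchangeability requires. For $m\ge p$ the swap does not affect $u_{k}^{p}$, so $\mathbb{E}\,b_{\pi_{k}^{m}}(u_{k}^{p})=\mathbb{E}\,b_{\pi_{k}^{r}}(u_{k}^{p})$. Subtracting gives
\begin{equation*}
\mathbb{E}\Omega_{\pi_{k}^{r}}(u_{k}^{p}) = \frac1n\sum_{1\le m<p}\mathbb{E}\bigl[b_{\pi_{k}^{r}}(u_{k}^{p}) - b_{\pi_{k}^{r}}(u_{k}^{p}(r,m))\bigr].
\end{equation*}

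The second step bounds each difference using (\ref{lemma, special func property, eq 2}) for $h=b_{\pi_{k}^{r}}$:
\begin{equation*}
|b(u)-b(u')|\le \Vert\nabla b(u)\Vert\Vert u-u'\Vert+\tfrac L2\Vert u-u'\Vert^2.
\end{equation*}
Young's inequality then gives at most $\varepsilon b(u)+(L/\varepsilon)\Vert u-u'\Vert^2$, using $\Vert\nabla b\Vert^2\le 2Lb$ and $\varepsilon<1/4$. Averaging $b_{\pi_{k}^{r}}(u_{k}^{p})$ over the swap index turns the $\varepsilon b$ term into a multiple of $\varepsilon\,\mathbb{E}B(u_{k}^{p})$ plus $\varepsilon\,\mathbb{E}\Omega$ again. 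That term can be absorbed, which is where the factor $2\varepsilon$ comes from. Alternatively, I would place the gradient at the unswapped point and use $b_{\pi^{r}}\le$ an average bound, choosing whichever ordering gives $B(u_{k}^{p})$ directly.

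The third step bounds the displacement:
\begin{equation*}
\Vert u_{k}^{p}-u_{k}^{p}(r,m)\Vert\le\Vert x_{k}^{p}-x_{k}^{p}(r,m)\Vert+\Vert d_{k}^{p}-d_{k}^{p}(r,m)\Vert.
\end{equation*}
Since $m<p\le r$, the swap replaces $q_{\pi_{k}^{m}}$ by $q_{\pi_{k}^{r}}$ in the $d$-sum, so the $d$-part equals $\eta_{k}\Vert q_{\pi_{k}^{r}}-q_{\pi_{k}^{m}}\Vert$. For the $x$-part, the two trajectories coincide up to step $m$. At step $m$ they take gradients of different components, and afterwards by nonexpansiveness (as in Lemma \ref{lemma, diff btw virtual and real seq}, using $\eta_{k}\le 2/L$) the gap is at most $\eta_{k}\Vert\nabla f_{\pi_{k}^{r}}(x_{k}^{m})-\nabla f_{\pi_{k}^{m}}(x_{k}^{m})\Vert$. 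Writing $\nabla f_{i}(x)=\nabla b_{i}(x)+q_{i}$ gives
\begin{equation*}
\Vert u_{k}^{p}-u_{k}^{p}(r,m)\Vert^2\lesssim \eta_{k}^2\bigl(\Vert\nabla b_{\pi_{k}^{r}}(x_{k}^{m})\Vert^2+\Vert\nabla b_{\pi_{k}^{m}}(x_{k}^{m})\Vert^2+\Vert q_{\pi_{k}^{r}}\Vert^2+\Vert q_{\pi_{k}^{m}}\Vert^2\bigr).
\end{equation*}
Each $\Vert\nabla b\Vert^2$ is at most $2Lb$. The $q$ terms have expectation $\sigma_{*}^2$ because each index is marginally uniform. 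The term $b_{\pi_{k}^{m}}(x_{k}^{m})$ appears directly in the statement. The term $b_{\pi_{k}^{r}}(x_{k}^{m})$, with $r>m$, is the delicate one. Its expectation equals that of $b_{\pi_{k}^{m}}(x_{k}^{m}(r,m))$ by exchangeability, and I would bound it by the unswapped version via the same argument, or accept the relation $\mathbb{E} b_{\pi_{k}^{r}}(x_{k}^{m}) \le \mathbb{E}B(x_{k}^{m})\cdot\frac{n}{n-m+1}$. Multiplying by $L/(\varepsilon n)$ and summing over $m<p$ then gives the $L^2\eta_k^2\sum_{m<p}\mathbb{E}b_{\pi_k^m}(x_k^m)/(\varepsilon n)$ and $L\eta_k^2(p-1)\sigma_*^2/(\varepsilon n)$ terms. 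The main obstacle is controlling the cross term $b_{\pi_{k}^{r}}(x_{k}^{m})$ by $b_{\pi_{k}^{m}}(x_{k}^{m})$ without introducing an $n$-dependent constant; I expect the constants 64 and 48 to absorb these estimates.
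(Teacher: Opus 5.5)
Your route is the paper's: the same swap identity reducing $\mathbb{E}\Omega_{\pi_k^r}(u_k^p)$ to $\frac1n\sum_{m<p}$ of differences, the same use of (\ref{lemma, special func property, eq 2}) with the gradient at the unswapped point followed by Young's inequality, the same nonexpansiveness bound on the displacement, and the same absorption of $\varepsilon|\mathbb{E}\Omega_{\pi_k^r}(u_k^p)|$ using $1-\varepsilon\ge 1/2$. You are also right that $d_k^l(i,j)$ must be formed from the swapped permutation; that is how the paper's proof actually uses it.

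The gap is the step you flag yourself. The cross term $\mathbb{E}\,b_{\pi_k^r}(x_k^m)$ with $r>m$ is left unresolved. Your fallback $\mathbb{E}\,b_{\pi_k^r}(x_k^m)\le\frac{n}{n-m+1}\mathbb{E}B(x_k^m)$ does not give the stated inequality, for two reasons. First, the factor $\frac{n}{n-m+1}$ reaches $n/2$ at $m=n-1$, so no absolute constant such as $64$ can absorb it. Second, it replaces $b_{\pi_k^m}(x_k^m)$ by $B(x_k^m)$, which is not the quantity appearing in the statement.

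The missing observation is that this relation is an exact equality. The iterate $x_k^m$ is determined by $\pi_1,\dots,\pi_{k-1},\pi_k^1,\dots,\pi_k^{m-1}$. Conditionally on these, both $\pi_k^r$ and $\pi_k^m$ are uniform on the $n-m+1$ remaining indices. By the tower property, $\mathbb{E}\,b_{\pi_k^r}(x_k^m)=\mathbb{E}\,b_{\pi_k^m}(x_k^m)$.

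You were one step from this. In your own exchangeability identity $\mathbb{E}\,b_{\pi_k^r}(x_k^m)=\mathbb{E}\,b_{\pi_k^m}(x_k^m(r,m))$, the swap only touches positions $m$ and $r$. Hence $x_k^m(r,m)=x_k^m$, and no further argument is needed.

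With this in hand, the constants come out exactly:
\begin{itemize}
\item Expand $2\eta_k^2\Vert(\nabla b_{\pi_k^r}-\nabla b_{\pi_k^m})(x_k^m)+(q_{\pi_k^r}-q_{\pi_k^m})\Vert^2+2\eta_k^2\Vert q_{\pi_k^r}-q_{\pi_k^m}\Vert^2$ and use $\Vert\nabla b_i\Vert^2\le 2Lb_i$. This gives $\mathbb{E}\Vert u_k^p(r,m)-u_k^p\Vert^2\le 32L\eta_k^2\,\mathbb{E}\,b_{\pi_k^m}(x_k^m)+24\eta_k^2\sigma_{*}^{2}$.
\item Multiply by $L/\varepsilon$ and sum over $m<p$ with the factor $1/n$.
\item Bound $\frac{p-1}{n}\varepsilon\,\mathbb{E}\,b_{\pi_k^r}(u_k^p)\le\varepsilon\mathbb{E}B(u_k^p)+\varepsilon|\mathbb{E}\Omega_{\pi_k^r}(u_k^p)|$, then divide by $1-\varepsilon\ge 1/2$.
\end{itemize}
This yields exactly $2\varepsilon$, $64$ and $48$.

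Incidentally, no averaging over the swap index is needed in your second step. The term $\varepsilon\,b_{\pi_k^r}(u_k^p)$ does not depend on $m$, and $\mathbb{E}\,b_{\pi_k^r}(u_k^p)=\mathbb{E}B(u_k^p)+\mathbb{E}\Omega_{\pi_k^r}(u_k^p)$ by definition.
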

\begin{proof}
    Since $B(u_{k}^{p})=n^{-1}\sum_{m=1}^{n}b_{m}(u_{k}^{p})$, we have
    \begin{equation}
    \label{lemma, RR component vs finite-sum, eq 1}
        B(u_{k}^{p})-b_{\pi_{k}^{r}}(u_{k}^{p}) = \frac{1}{n}\sum_{m=1}^{n}\left(b_{m}(u_{k}^{p})-b_{\pi_{k}^{r}}(u_{k}^{p}) \right) = \frac{1}{n} \sum_{m=1}^{n}\left(b_{\pi_{k}^{m}}(u_{k}^{p})-b_{\pi_{k}^{r}}(u_{k}^{p}) \right),
    \end{equation}
    where the last equality is by $\{\pi_{k}^{m} | m\in [n] \}=[n]$. Note that 
    \begin{equation*}
        \pi_{k}(r,m) \overset{\mathcal{D}}{=} \pi_{k}, \ \left(\pi_{k}(r,m), u_{k}^{p}(r,m) \right) \overset{\mathcal{D}}{=} \left(\pi_{k}, u_{k}^{p} \right), \ p\in [n],
    \end{equation*}
    then
    \begin{equation}
    \label{lemma, RR component vs finite-sum, eq 2}
        \mathbb{E} b_{\pi_{k}^{m}}(u_{k}^{p}) = \mathbb{E} b_{\pi_{k}^{m}(r,m)}(u_{k}^{p}(r,m)) = \mathbb{E} b_{\pi_{k}^{r}}(u_{k}^{p}(r,m)).
    \end{equation}
    By definitions (\ref{def, in line, virtual seq, eq 1}) and (\ref{def, in line, virtual seq, eq 2}),
    \begin{equation}
    \label{lemma, RR component vs finite-sum, eq 3}
        u_{k}^{p}(r,m) = u_{k}^{p}, \ \mathbb{E}b_{\pi_{k}^{r}}(u_{k}^{p}(r,m)) = \mathbb{E}b_{\pi_{k}^{r}}(u_{k}^{p}), \ m \geq p.
    \end{equation}
    Taking expectation in (\ref{lemma, RR component vs finite-sum, eq 1}) and using (\ref{lemma, RR component vs finite-sum, eq 2}) and (\ref{lemma, RR component vs finite-sum, eq 3}) lead to
    \begin{equation*}
        \mathbb{E} \Omega_{\pi_{k}^{r}}(u_{k}^{p}) = \frac{-1}{n}\sum_{1\leq m<p}\mathbb{E}\left[b_{\pi_{k}^{r}}(u_{k}^{p}(r,m))-b_{\pi_{k}^{r}}(u_{k}^{p}) \right].
    \end{equation*}
    By Jensen's inequality, we have
    \begin{equation}
    \label{lemma, RR component vs finite-sum, eq 4}
        \left|\mathbb{E}\Omega_{\pi_{k}^{r}}(u_{k}^{p})\right| \leq \frac{1}{n}\sum_{1\leq m<p} \mathbb{E} \left| b_{\pi_{k}^{r}}(u_{k}^{p}(r,m))-b_{\pi_{k}^{r}}(u_{k}^{p}) \right|.
    \end{equation}
    By Lemma \ref{lemma, special func property}, we have
    \begin{align}
    \notag
        \left| b_{\pi_{k}^{r}}(u_{k}^{p}(r,m))-b_{\pi_{k}^{r}}(u_{k}^{p}) \right| &\leq \Vert \nabla b_{\pi_{k}^{r}}(u_{k}^{p})\Vert \Vert u_{k}^{p}(r,m)-u_{k}^{p}\Vert + \frac{L}{2} \Vert u_{k}^{p}(r,m)-u_{k}^{p}\Vert^2 \\
        &\leq \varepsilon b_{\pi_{k}^{r}}(u_{k}^{p}) + \frac{L}{\varepsilon} \Vert u_{k}^{p}(r,m)-u_{k}^{p}\Vert^2,
        \label{lemma, RR component vs finite-sum, eq 5}
    \end{align}
    where the first inequality is by (\ref{lemma, special func property, eq 2}), and the second inequality is by Young's inequality, (\ref{lemma, special func property, eq 1}), and the setting $\varepsilon\in (0,1/4)$. The quantity $\Vert u_{k}^{p}(r,m)-u_{k}^{p}\Vert^2$ is further bounded as follows. By (\ref{def, in line, virtual seq, eq 2}), we have
    \begin{align*}
        \Vert u_{k}^{p}(r,m)-u_{k}^{p}\Vert^2 &= \Vert \left( x_{k}^{p}(r,m)-x_{k}^{p}\right) + \eta_{k}\left(q_{\pi_{k}^{r}}-q_{\pi_{k}^{m}}\right)\Vert^2 \\
        &\leq 2\eta_{k}^{2}\Vert \nabla f_{\pi_{k}^{r}}(x_{k}^{m})-\nabla f_{\pi_{k}^{m}}(x_{k}^{m})\Vert^2 + 2\eta_{k}^{2} \Vert q_{\pi_{k}^{r}}-q_{\pi_{k}^{m}}\Vert^2\\
        &= 2\eta_{k}^{2} \Vert \left(\nabla b_{\pi_{k}^{r}}(x_{k}^{m})-\nabla b_{\pi_{k}^{m}}(x_{k}^{m})\right)+\left(q_{\pi_{k}^{r}}-q_{\pi_{k}^{m}} \right)\Vert^2+2\eta_{k}^{2} \Vert q_{\pi_{k}^{r}}-q_{\pi_{k}^{m}}\Vert^2 \\
        &\leq 8\eta_{k}^{2}\Vert \nabla b_{\pi_{k}^{r}}(x_{k}^{m})\Vert^2 + 8\eta_{k}^{2}\Vert \nabla b_{\pi_{k}^{m}}(x_{k}^{m})\Vert^2 + 12\eta_{k}^{2}\Vert q_{\pi_{k}^{r}}\Vert^2 + 12\eta_{k}^{2}\Vert q_{\pi_{k}^{m}}\Vert^2 \\
        &\leq 16L\eta_{k}^{2} b_{\pi_{k}^{r}}(x_{k}^{m}) + 16L\eta_{k}^{2} b_{\pi_{k}^{m}}(x_{k}^{m}) + 12\eta_{k}^{2}\Vert q_{\pi_{k}^{r}}\Vert^2 + 12\eta_{k}^{2}\Vert q_{\pi_{k}^{m}}\Vert^2,
    \end{align*}
    where the first inequality is by Lemma \ref{lemma, diff btw virtual and real seq}, the second equality is by Definition \ref{def, basics}, and the last inequality is by Lemma \ref{lemma, special func property}. Taking expectation in the above inequality gives
    \begin{equation}
    \label{lemma, RR component vs finite-sum, eq 6}
        \mathbb{E} \Vert u_{k}^{p}(r,m)-u_{k}^{p}\Vert^2 \leq 32L\eta_{k}^{2} \mathbb{E}b_{\pi_{k}^{m}}(x_{k}^{m}) + 24\eta_{k}^{2}\sigma_{*}^{2},
    \end{equation}
    where we used $\mathbb{E} \Vert q_{\pi_{k}^{r}}\Vert^2=\mathbb{E}\Vert q_{\pi_{k}^{m}}\Vert^2=\sigma_{*}^{2}$ and
    \begin{equation*}
        \mathbb{E}b_{\pi_{k}^{r}}(x_{k}^{m}) = \mathbb{E}\left\{ \mathbb{E} \left[ b_{\pi_{k}^{r}}(x_{k}^{m})| x_{k}^{m}\right]\right\} = \mathbb{E} b_{\pi_{k}^{m}}(x_{k}^{m}), \  r>m,
    \end{equation*}
    where the first equality is by the tower property of conditional expectation. Combining (\ref{lemma, RR component vs finite-sum, eq 4}), (\ref{lemma, RR component vs finite-sum, eq 5}), and (\ref{lemma, RR component vs finite-sum, eq 6}) gives
    \begin{align*}
        \left|\mathbb{E}\Omega_{\pi_{k}^{r}}(u_{k}^{p}) \right| &\leq \frac{1}{n}\sum_{1\leq m<p} \left(\varepsilon \mathbb{E}b_{\pi_{k}^{r}}(u_{k}^{p}) + 32L^{2}\eta_{k}^{2}\varepsilon^{-1}\mathbb{E}b_{\pi_{k}^{m}}(x_{k}^{m}) + 24L\eta_{k}^{2}\varepsilon^{-1} \sigma_{*}^{2} \right) \\
        &= \frac{1}{n} \sum_{1\leq m<p} \left(\varepsilon \mathbb{E}\Omega_{\pi_{k}^{r}}(u_{k}^{p}) + \varepsilon \mathbb{E}B(u_{k}^{p}) + 32L^{2}\eta_{k}^{2}\varepsilon^{-1}\mathbb{E}b_{\pi_{k}^{m}}(x_{k}^{m}) + 24L\eta_{k}^{2}\varepsilon^{-1} \sigma_{*}^{2} \right) \\
        &\leq \varepsilon \left| \mathbb{E}\Omega_{\pi_{k}^{r}}(u_{k}^{p})\right| + \varepsilon \mathbb{E}B(u_{k}^{p}) + \frac{32L^{2}\eta_{k}^{2}}{\varepsilon n}
        \sum_{1\leq m<p} \mathbb{E}b_{\pi_{k}^{m}}(x_{k}^{m}) + \frac{24L\eta_{k}^{2}(p-1)}{\varepsilon n} \sigma_{*}^{2}.
    \end{align*}
    In the above inequality, moving the term $\varepsilon |\mathbb{E}\Omega_{\pi_{k}^{r}}(u_{k}^{p})|$ to the LHS and using $1-\varepsilon\geq 3/4\geq 1/2$ give us
    \begin{equation*}
        \left| \mathbb{E}\Omega_{\pi_{k}^{r}}(u_{k}^{p}) \right| \leq 2\varepsilon \mathbb{E}B(u_{k}^{p}) + \frac{64L^{2}\eta_{k}^{2}}{\varepsilon n} \sum_{1\leq m<p} \mathbb{E} b_{\pi_{k}^{m}}(x_{k}^{m}) + \frac{48L\eta_{k}^{2}(p-1)}{\varepsilon n}\sigma_{*}^{2},
    \end{equation*}
    which is the desired inequality.
\end{proof}

\begin{lemma}
\label{lemma, RR component vs finite-sum v2}
    Assume the algorithm is run for $K\in\mathbb{N}^{+}$ epochs and consider a fixed epoch  $k\in [K]$. For any given $\varepsilon\in (0,1/4)$, we have
    \begin{equation*}
        \left|\mathbb{E}\Omega_{\pi_{k}^{r}}(x_{k}^{r}) \right| \leq 6\varepsilon \mathbb{E} B(u_{k}^{r}) + \frac{128L^{2}\eta_{k}^{2}}{\varepsilon n}\sum_{1\leq m<r} \mathbb{E}b_{\pi_{k}^{m}}(x_{k}^{m}) + \frac{96L\eta_{k}^{2}(r-1)}{\varepsilon n}\sigma_{*}^{2} + \frac{4L}{\varepsilon}\mathbb{E}\Vert d_{k}^{r}\Vert^2,
    \end{equation*}
    where $r\in [n]$.
\end{lemma}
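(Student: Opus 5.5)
The plan is to reduce to Lemma \ref{lemma, RR component vs finite-sum} with $p=r$, using $x_{k}^{r}=u_{k}^{r}-d_{k}^{r}$ to move the evaluation point. Split
\begin{equation*}
    \Omega_{\pi_{k}^{r}}(x_{k}^{r}) = \Omega_{\pi_{k}^{r}}(u_{k}^{r}) + \left[b_{\pi_{k}^{r}}(x_{k}^{r})-b_{\pi_{k}^{r}}(u_{k}^{r})\right] - \left[B(x_{k}^{r})-B(u_{k}^{r})\right].
\end{equation*}
Taking expectations and applying the triangle inequality, the first term is controlled directly by Lemma \ref{lemma, RR component vs finite-sum} (with $p=r$). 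This gives $2\varepsilon\mathbb{E}B(u_{k}^{r})$, the sum $\frac{64L^{2}\eta_{k}^{2}}{\varepsilon n}\sum_{m<r}\mathbb{E}b_{\pi_{k}^{m}}(x_{k}^{m})$, and $\frac{48L\eta_{k}^{2}(r-1)}{\varepsilon n}\sigma_{*}^{2}$. These are half of the stated constants, so there is room to spare.

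For the two bracketed differences I will use (\ref{lemma, special func property, eq 2}) of Lemma \ref{lemma, special func property}, anchored at $y=u_{k}^{r}$, together with Young's inequality and (\ref{lemma, special func property, eq 1}):
\begin{equation*}
    |h(x_{k}^{r})-h(u_{k}^{r})| \leq \varepsilon h(u_{k}^{r}) + \left(\tfrac{L}{2\varepsilon}+\tfrac{L}{2}\right)\Vert d_{k}^{r}\Vert^2 \leq \varepsilon h(u_{k}^{r}) + \tfrac{L}{\varepsilon}\Vert d_{k}^{r}\Vert^2 .
\end{equation*}
Here $h\in\{b_{\pi_{k}^{r}},B\}$, and the last step uses $\varepsilon<1/4$. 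For $h=B$ this yields $\varepsilon\mathbb{E}B(u_{k}^{r})+\frac{L}{\varepsilon}\mathbb{E}\Vert d_{k}^{r}\Vert^2$.

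For $h=b_{\pi_{k}^{r}}$ I get $\varepsilon\mathbb{E}b_{\pi_{k}^{r}}(u_{k}^{r})+\frac{L}{\varepsilon}\mathbb{E}\Vert d_{k}^{r}\Vert^2$. I then write $b_{\pi_{k}^{r}}(u_{k}^{r})=B(u_{k}^{r})+\Omega_{\pi_{k}^{r}}(u_{k}^{r})$ and bound $\varepsilon|\mathbb{E}\Omega_{\pi_{k}^{r}}(u_{k}^{r})|$ again by Lemma \ref{lemma, RR component vs finite-sum}. Since $\varepsilon<1/4$, this only adds a small multiple of the same terms. Note that $b_{\pi_{k}^{r}}$ is random, but for each fixed realization it satisfies the hypotheses of Lemma \ref{lemma, special func property}, so the pointwise inequality holds before taking expectations.

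Collecting terms gives:
\begin{itemize}
    \item $B$-coefficient: $2\varepsilon+\varepsilon+\varepsilon+2\varepsilon^{2}\leq 6\varepsilon$;
    \item $b_{\pi_{k}^{m}}$-sum coefficient: $64(1+\varepsilon)\leq 128$;
    \item $\sigma_{*}^{2}$ coefficient: $48(1+\varepsilon)\leq 96$;
    \item $\Vert d_{k}^{r}\Vert^2$ coefficient: $2L/\varepsilon\leq 4L/\varepsilon$.
\end{itemize}
This matches the statement.

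The only delicate point is the $b_{\pi_{k}^{r}}(u_{k}^{r})$ term. Its expectation is not $\mathbb{E}B(u_{k}^{r})$, because $u_{k}^{r}$ depends on $\pi_{k}^{r}$ through $d_{k}^{r}$. This is exactly why I route it back through Lemma \ref{lemma, RR component vs finite-sum} rather than invoking the tower property.
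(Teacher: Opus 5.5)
Your proposal is correct and is essentially the paper's own proof. It uses the same triangle-inequality split of $\Omega_{\pi_k^r}(x_k^r)$ around $u_k^r$, the same use of (\ref{lemma, special func property, eq 2}) plus Young's inequality anchored at $u_k^r$, and the same rewriting $\varepsilon\,\mathbb{E}b_{\pi_k^r}(u_k^r)\le\varepsilon\,\mathbb{E}B(u_k^r)+\varepsilon|\mathbb{E}\Omega_{\pi_k^r}(u_k^r)|$ followed by Lemma \ref{lemma, RR component vs finite-sum} with $p=r$; the only difference is that the paper bounds $1+\varepsilon\le 2$ before invoking it, whereas you carry $1+\varepsilon$ through.

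One minor wording point in your closing remark: $u_k^r$ is a function of $\pi_k^1,\dots,\pi_k^{r-1}$ only, not of $\pi_k^r$. The tower property fails because, under sampling without replacement, $\pi_k^r$ is correlated with those earlier indices, and this affects $x_k^r$ in exactly the same way; $\pi_k^r$ does not appear in $d_k^r$.
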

\begin{proof}
    We have
    \begin{align*}
        \left| \mathbb{E}\Omega_{\pi_{k}^{r}}(x_{k}^{r})\right| &\leq \left|\mathbb{E}\Omega_{\pi_{k}^{r}}(u_{k}^{r}) \right| + \left|\mathbb{E}\Omega_{\pi_{k}^{r}}(x_{k}^{r})-\mathbb{E}\Omega_{\pi_{k}^{r}}(u_{k}^{r}) \right| \\
        &\leq \left|\mathbb{E}\Omega_{\pi_{k}^{r}}(u_{k}^{r}) \right| + \mathbb{E} \left| \Omega_{\pi_{k}^{r}}(x_{k}^{r})-\Omega_{\pi_{k}^{r}}(u_{k}^{r})\right| \\
        &\leq \left|\mathbb{E}\Omega_{\pi_{k}^{r}}(u_{k}^{r}) \right| + \mathbb{E} \left| b_{\pi_{k}^{r}}(x_{k}^{r})-b_{\pi_{k}^{r}}(u_{k}^{r})\right| + \mathbb{E} \left| B(x_{k}^{r})-B(u_{k}^{r})\right| \\
        &\leq \left|\mathbb{E}\Omega_{\pi_{k}^{r}}(u_{k}^{r}) \right| + \mathbb{E}\Vert \nabla b_{\pi_{k}^{r}}(u_{k}^{r})\Vert \Vert x_{k}^{r}-u_{k}^{r}\Vert + \frac{L}{2}\mathbb{E}\Vert x_{k}^{r}-u_{k}^{r}\Vert^2 \\
        &\quad +\mathbb{E}\Vert \nabla B(u_{k}^{r})\Vert \Vert x_{k}^{r}-u_{k}^{r}\Vert + \frac{L}{2}\mathbb{E}\Vert x_{k}^{r}-u_{k}^{r}\Vert^2 \\
        &\leq \left|\mathbb{E}\Omega_{\pi_{k}^{r}}(u_{k}^{r}) \right| + \varepsilon \mathbb{E} b_{\pi_{k}^{r}}(u_{k}^{r}) + \frac{L}{\varepsilon}\mathbb{E} \Vert x_{k}^{r}-u_{k}^{r}\Vert^2 + \varepsilon \mathbb{E}B(u_{k}^{r}) + \frac{L}{\varepsilon} \mathbb{E}\Vert x_{k}^{r}-u_{k}^{r}\Vert^2 \\
        &\leq \left|\mathbb{E}\Omega_{\pi_{k}^{r}}(u_{k}^{r}) \right| + \varepsilon \left| \mathbb{E}\Omega_{\pi_{k}^{r}}(u_{k}^{r})\right| + 2\varepsilon \mathbb{E}B(u_{k}^{r}) + \frac{2L}{\varepsilon}\mathbb{E}\Vert d_{k}^{r}\Vert^2 \\
        &\leq 2\left|\mathbb{E}\Omega_{\pi_{k}^{r}}(u_{k}^{r}) \right| + 2\varepsilon \mathbb{E}B(u_{k}^{r}) + \frac{2L}{\varepsilon}\mathbb{E}\Vert d_{k}^{r}\Vert^2,
    \end{align*}
    where the second inequality is by Jensen's inequality, the fourth inequality is by Lemma \ref{lemma, special func property}, the fifth inequality is by Lemma \ref{lemma, special func property} and $\varepsilon\in (0,1/4)$, and in the sixth inequality we use Definition \ref{def, basics}. Using Lemma \ref{lemma, RR component vs finite-sum} with $r=p$ to upper bound the term $2\left|\mathbb{E}\Omega_{\pi_{k}^{r}}(u_{k}^{r}) \right|$ of the above inequality gives the desired equation\footnote{Some absolute numerical constants in the resulting inequality are slightly larger than necessary, which is due to a miscalculation in an early derivation. Since we do not try to optimize the constants, the slightly loose inequality is still used in this version of the manuscript and does not affect the main result.}. 
\end{proof}

With Lemma \ref{lemma, RR component vs finite-sum} and Lemma \ref{lemma, RR component vs finite-sum v2}, we can prove the following equation which bounds a quantity arising when we consider the weighted sum of the one-step inequality from Lemma \ref{lemma, one step lemma}. Lemma \ref{lemma, RR component vs finite-sum}, Lemma \ref{lemma, RR component vs finite-sum v2}, and the following lemma upper bound their target terms with (a weighted sum of) $B(u_{t})$, which will be absorbed by similar terms in the LHS of the weighted sum of the aforementioned one-step inequality.

\begin{lemma}
\label{lemma, bound the overall bias}
    Define
    \begin{equation*}
        \mathcal{B} \coloneqq \sum_{s=1}^{M}\Delta v_{s}\sum_{t=s}^{M} \alpha_{t}\mathbb{E}\Omega_{i_{t}}(u_{s}) - \sum_{t=1}^{M} \alpha_{t}v_{t+1}\mathbb{E}\Omega_{i_{t}}(x_{t}),
    \end{equation*}
    and set
    \begin{equation*}
        \varepsilon = \frac{\lambda}{C}, \ \eta_{1}\leq \frac{1}{17 C^{3/2}\Lambda^{3/2}L}.
    \end{equation*}
    Then we have
    \begin{equation*}
        \mathcal{B} \leq \left|\mathcal{B} \right| \leq  \frac{\lambda}{2}\sum_{k=1}^{K}\sum_{p=1}^{n}\eta_{k}v_{\tau_{k,p}+1}\mathbb{E}B(u_{k}^{p}) + 153C^{2}\Lambda^{2}L \sum_{k=1}^{K}\sum_{p=1}^{n}\eta_{k}v_{\tau_{k,p}+1}\mathbb{E}\Vert d_{k}^{p}\Vert^2,
    \end{equation*}
    and we denote the above upper bound of $|\mathcal{B}|$ as $|\mathcal{B}|_{\rm{up}}$.
\end{lemma}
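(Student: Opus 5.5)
The plan is to identify which terms of $\mathcal{B}$ vanish by independence, bound the surviving within-epoch terms with Lemma \ref{lemma, RR component vs finite-sum} and Lemma \ref{lemma, RR component vs finite-sum v2}, and then use the properties of $\{v_t\}_t$ in Lemma \ref{lemma, aux seq property} to return everything to the two target sums.

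\textbf{Step 1: discard the zero terms.} The key fact is that for any random point $y$ independent of an index $i\sim\operatorname{Unif}[n]$ we have $\mathbb{E}\,\Omega_i(y)=\mathbb{E}[b_i(y)-B(y)]=0$. This applies in two situations.
\begin{itemize}
\item For $t=M$, the index $i_M$ is a fresh uniform index and $x_M=u_M$. Hence all ghost-iteration terms vanish in both parts of $\mathcal{B}$.
\item For $s<t$ with $s$ in an epoch strictly before that of $t$, the point $u_s$ is determined by earlier permutations. The index $i_t=\pi_k^r$ is then marginally uniform and independent of $u_s$, so these terms vanish too.
\end{itemize}
Note also that $u_{\tau_{k+1,1}}=u_k^{n+1}$ belongs to the preceding epoch in this sense. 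Therefore only pairs $s=\tau_{k,p}$, $t=\tau_{k,r}$ with $r\ge p$ in the same epoch survive. Writing $\mathcal{B}_1$ and $\mathcal{B}_2$ for the two parts,
\begin{equation*}
|\mathcal{B}|\le \sum_{k=1}^K\sum_{p=1}^n \Delta v_{\tau_{k,p}}\,\eta_k\sum_{r=p}^n\bigl|\mathbb{E}\Omega_{\pi_k^r}(u_k^p)\bigr|+\sum_{k=1}^K\sum_{r=1}^n\eta_k v_{\tau_{k,r}+1}\bigl|\mathbb{E}\Omega_{\pi_k^r}(x_k^r)\bigr|.
\end{equation*}

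\textbf{Step 2: bound each part pointwise.} For $\mathcal{B}_1$ I apply Lemma \ref{lemma, RR component vs finite-sum} to each summand; the resulting bound does not depend on $r$. This leaves the factor $\Delta v_s(n-p+1)\eta_k$. Since $\Delta v_s=a(\alpha_s/H_s)v_{s+1}$ and $H_s\ge (n-p+1)\eta_k$, this factor is at most $\eta_k v_{\tau_{k,p}+1}$. For $\mathcal{B}_2$ I apply Lemma \ref{lemma, RR component vs finite-sum v2} directly. With $\varepsilon=\lambda/C$ and $C\ge 50$, the combined coefficient of $\eta_kv_{\tau_{k,p}+1}\mathbb{E}B(u_k^p)$ is $8\varepsilon\le 8\lambda/50$. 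This leaves room under $\lambda/2$ for the terms produced in Step 3.

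\textbf{Step 3: handle the $b_{\pi_k^m}(x_k^m)$ terms (main obstacle).} The sums $\sum_{m<p}\mathbb{E}b_{\pi_k^m}(x_k^m)$ remain, each weighted by $L^2\eta_k^2/(\varepsilon n)$.
\begin{itemize}
\item First I swap the order of summation, $\sum_p v_{\tau_{k,p}+1}\sum_{m<p}=\sum_m\sum_{p>m}v_{\tau_{k,p}+1}$. By (\ref{lemma, aux seq property, eq 2}) and the monotonicity $v_{\tau_{k,m}}\le v_{\tau_{k,m}+1}$, the inner sum is at most $nC\Lambda v_{\tau_{k,m}+1}$. So the weight per $m$ becomes $CL^2\eta_k^2\Lambda/\varepsilon=C^2\Lambda^2L^2\eta_k^2$, which is at most $1/(289 C)$ by the stepsize condition.
\item Next I convert $\mathbb{E}b_{\pi_k^m}(x_k^m)=\mathbb{E}B(x_k^m)+\mathbb{E}\Omega_{\pi_k^m}(x_k^m)$. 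I bound $B(x)\le (1+\rho)B(u)+(L/\rho)\|d\|^2$ as in the proof of Lemma \ref{lemma, one step lemma}, and bound $|\mathbb{E}\Omega|$ again by Lemma \ref{lemma, RR component vs finite-sum v2}.
\item That second use reproduces a $b$-sum, but with a much smaller factor of order $C^2\Lambda^2L^2\eta_k^2$. I would therefore close it by absorption: sum the weighted inequality over $m$ and move that small multiple of the same weighted $b$-sum to the left-hand side.
\end{itemize}
This is where the constant bookkeeping is delicate. One must check that everything landing on $\mathbb{E}B(u_k^m)$ keeps the total coefficient at most $\lambda/2$. One must also check that the $\|d\|^2$ and $\sigma_*^2$ pieces stay $O(C^2\Lambda^2L)$.

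\textbf{Step 4: convert the $\sigma_*^2$ terms.} Terms of the form $\eta_k^3L(p-1)\sigma_*^2/(\varepsilon n)$ remain, weighted by $v_{\tau_{k,p}+1}$. By (\ref{lemma, aux seq property, eq 3}) the weighted sum of $(p-1)$ is dominated by $C\Lambda\sum_p v_{\tau_{k,p}+1}\frac{(p-1)(n-p+1)}{n-1}$. By (\ref{SWR lemma, eq 2}), $\eta_k^2\frac{(p-1)(n-p+1)}{n-1}\sigma_*^2=\mathbb{E}\|d_k^p\|^2$, which is the target form. This step also costs a factor $\eta_k/n\le 1$, since $\eta_k\le 1/L$ and the $L$ is kept outside. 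Together with the direct $4L/\varepsilon\,\mathbb{E}\|d_k^r\|^2$ term from Lemma \ref{lemma, RR component vs finite-sum v2}, this gives the coefficient $153C^2\Lambda^2L$ on $\eta_kv_{\tau_{k,p}+1}\mathbb{E}\|d_k^p\|^2$.
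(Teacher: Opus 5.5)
Your plan is correct and follows the paper's proof essentially step for step. The cross-epoch (and ghost) terms vanish, and $\mathcal{B}_1$ is handled by Lemma~\ref{lemma, RR component vs finite-sum} with $\eta_k\Delta v_{\tau_{k,p}}(n-p+1)\le \eta_k v_{\tau_{k,p}+1}$. The term $\mathcal{B}_2$ is handled by Lemma~\ref{lemma, RR component vs finite-sum v2}. The $b_{\pi_k^m}(x_k^m)$-sums are treated by swapping the order of summation and using (\ref{lemma, aux seq property, eq 2}), followed by an absorption argument. The $\sigma_*^2$-terms are handled by (\ref{lemma, aux seq property, eq 3}) and Lemma~\ref{lemma, SWR lemma}.

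The only variation is in the absorption step. You split $b_{\pi_k^m}(x_k^m)=B(x_k^m)+\Omega_{\pi_k^m}(x_k^m)$ and reuse Lemma~\ref{lemma, RR component vs finite-sum v2}. The paper instead first bounds $b_{\pi_k^p}(x_k^p)\le(1+\varepsilon)b_{\pi_k^p}(u_k^p)+\frac{2L}{\varepsilon}\Vert d_k^p\Vert^2$ and reuses Lemma~\ref{lemma, RR component vs finite-sum}, which is equivalent.

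Two bookkeeping fixes are needed.
\begin{itemize}
\item The stepsize condition actually gives $C^2\Lambda^2L^2\eta_1^2\le \lambda/(289C)$, and you need that factor $\lambda$. With only $1/(289C)$, the coefficient of $\sum_{k,p}\eta_kv_{\tau_{k,p}+1}\mathbb{E}B(u_k^p)$ would exceed $\lambda/2$ once $\Lambda$ is large.
\item In Step 4 the powers of $\eta_k$ already match exactly, so the only factor discarded is $1/n\le 1$. The remark about $\eta_k/n$ and $\eta_k\le 1/L$ should be dropped.
\end{itemize}
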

\begin{proof}
    For notational convenience, define $c_{k,p}\coloneqq \eta_{k}v_{\tau_{k,p}+1}$.
    We also define
    \begin{equation*}
        \mathcal{B}_{1} \coloneqq \sum_{s=1}^{M}\Delta v_{s}\sum_{t=s}^{M} \alpha_{t}\mathbb{E}\Omega_{i_{t}}(u_{s}), \ \mathcal{B}_{2} \coloneqq \sum_{t=1}^{M} \alpha_{t}v_{t+1}\mathbb{E}\Omega_{i_{t}}(x_{t}),
    \end{equation*}
    then $\mathcal{B}=\mathcal{B}_{1}-\mathcal{B}_{2}$. In the inner summation $\sum_{t=s}^{M} \alpha_{t}\mathbb{E}\Omega_{i_{t}}(u_{s})$ of $\mathcal{B}_{1}$, when $t$ and $s$ are not in the same epoch, we have (note that $t\geq s$)
    \begin{equation*}
        \mathbb{E}\Omega_{i_{t}}(u_{s}) = \mathbb{E}\left\{\mathbb{E}\left[\Omega_{i_{t}}(u_{s}) | u_{\lceil s/n\rceil}  \right] \right\} = 0,
    \end{equation*}
    where the second equality is because $i_{t}\sim \operatorname{Unif} [n]$ conditioned on all the history up to the end of the epoch which $u_{s}$ is in. Hence $\mathcal{B}_{1}$ can be reformulated as follows
    \begin{equation*}
        \mathcal{B}_{1} = \sum_{k=1}^{K} \eta_{k} \sum_{p=1}^{n} \Delta v_{\tau_{k,p}} \sum_{r=p}^{n} \mathbb{E}\Omega_{\pi_{k}^{r}}(u_{k}^{p}).
    \end{equation*}
    Denote $\gamma_{k,p}\coloneqq \eta_{k}\Delta v_{\tau_{k,p}}(n-p+1)$. Note that $\eta_{k}>0$ and $\Delta v_{\tau_{k,p}}>0$ (since the sequence $\{v_{t}\}_{t}$ is increasing), by Lemma \ref{lemma, RR component vs finite-sum}, we have
    \begin{align}
    \notag
        \mathcal{B}_{1} &\leq \sum_{k=1}^{K}\eta_{k} \sum_{p=1}^{n} \Delta v_{\tau_{k,p}} \sum_{r=p}^{n} \left( 2\varepsilon \mathbb{E}B(u_{k}^{p}) + \frac{64L^{2}\eta_{k}^{2}}{\varepsilon n} \sum_{1\leq m<p} \mathbb{E} b_{\pi_{k}^{m}}(x_{k}^{m}) + \frac{48L\eta_{k}^{2}(p-1)}{\varepsilon n}\sigma_{*}^{2} \right) \\
    \label{lemma, bound the overall bias, eq 1}
        &\leq \sum_{k=1}^{K}\sum_{p=1}^{n} \gamma_{k,p} \left( 2\varepsilon \mathbb{E}B(u_{k}^{p}) + \frac{64L^{2}\eta_{k}^{2}}{\varepsilon n} \sum_{1\leq m<p} \mathbb{E} b_{\pi_{k}^{m}}(x_{k}^{m}) + \frac{48L\eta_{k}^{2}(p-1)}{\varepsilon n}\sigma_{*}^{2} \right),
    \end{align}
    where the second inequality is by the fact that the upper bound of $|\mathbb{E}\Omega_{\pi_{k}^{r}}(u_{k}^{p})|$ in Lemma \ref{lemma, RR component vs finite-sum} does not depend on $r$. We then upper bound $\gamma_{k,p}$ with $c_{k,p}$ as follows. By Definition \ref{def, auxiliary seq}, $\Delta v_{t} = a\alpha_{t}v_{t+1}/H_{t}$. When $t=\tau_{k,p}$, we have
    \begin{equation*}
        \Delta v_{\tau_{k,p}} = a\eta_{k}v_{\tau_{k,p}+1}/H_{\tau_{k,p}} = ac_{k,p}/H_{k,p}.
    \end{equation*}
    Then 
    \begin{equation}
    \label{lemma, bound the overall bias, eq 2}
        \gamma_{k,p} = ac_{k,p}\frac{(n-p+1)\eta_{k}}{H_{\tau_{k,p}}} \leq ac_{k,p} \leq c_{k,p},
    \end{equation}
    where the first inequality is by
    \begin{equation*}
        H_{\tau_{k,p}} = \sum_{s=p}^{n} \alpha_{\tau_{k,s}} + \sum_{t=\tau_{k+1,1}}^{M}\alpha_{t} \geq  \sum_{s=p}^{n} \alpha_{\tau_{k,s}} = (n-p+1)\eta_{k}.
    \end{equation*}
    Define
    \begin{align*}
        S_{1,1} &\coloneqq \varepsilon \sum_{k=1}^{K}\sum_{p=1}^{n} c_{k,p}\mathbb{E}B(u_{k}^{p}), \ S_{1,2}\coloneqq \sum_{k=1}^{K} \frac{L^{2}\eta_{k}^{2}}{\varepsilon n} \sum_{p=1}^{n} c_{k,p} \sum_{1\leq m<p} \mathbb{E}b_{\pi_{k}^{m}}(x_{k}^{m}), \\
        S_{1,3} &\coloneqq \sum_{k=1}^{K}\frac{L\eta_{k}^{2}}{\varepsilon n}\sum_{p=1}^{n} c_{k,p}(p-1) \sigma_{*}^{2}.
    \end{align*}
    Then combining (\ref{lemma, bound the overall bias, eq 1}) and (\ref{lemma, bound the overall bias, eq 2}) gives
    \begin{equation}
    \label{lemma, bound the overall bias, eq 3}
        \mathcal{B}_{1} \leq 2S_{1,1} + 64S_{1,2} + 48S_{1,3}.
    \end{equation}
    Define
    \begin{equation*}
        \mathcal{S}_{U}\coloneqq \sum_{k=1}^{K}\sum_{p=1}^{n} c_{k,p}\mathbb{E}B(u_{k}^{p}), \ \mathcal{S}_{X}\coloneqq \sum_{k=1}^{K}\sum_{p=1}^{n}c_{k,p} \mathbb{E}b_{\pi_{k}^{p}}(x_{k}^{p}), \ \mathcal{S}_{D}\coloneqq \sum_{k=1}^{K}\sum_{p=1}^{n} c_{k,p}\mathbb{E}\Vert d_{k}^{p}\Vert^2.
    \end{equation*}
    It is clear that
    \begin{equation}
    \label{lemma, bound the overall bias, eq 4}
        S_{1,1} = \varepsilon \mathcal{S}_{U}.
    \end{equation}
    For $S_{1,2}$, we have
    \begin{align}
    \notag
        S_{1,2} &= \sum_{k=1}^{K} \frac{L^{2}\eta_{k}^{2}}{\varepsilon n} \sum_{p=1}^{n}c_{k,p} \sum_{1\leq m<p} \mathbb{E}b_{\pi_{k}^{m}}(x_{k}^{m})
        = \sum_{k=1}^{K} \frac{L^{2}\eta_{k}^{2}}{\varepsilon n} \sum_{m=1}^{n-1} \mathbb{E}b_{\pi_{k}^{m}}(x_{k}^{m})\sum_{p=m+1}^{n}c_{k,p}\\
        \notag
        &= \sum_{k=1}^{K}\frac{L^{2}\eta_{k}^{2}}{\varepsilon} \sum_{m=1}^{n-1}\mathbb{E}b_{\pi_{k}^{m}}(x_{k}^{m}) \eta_{k} \sum_{p=m+1}^{n}\frac{v_{\tau_{k,p}+1}}{n} \\
        \notag
        &\leq C\Lambda \sum_{k=1}^{K}\frac{L^{2}\eta_{k}^{2}}{\varepsilon} \sum_{m=1}^{n} \eta_{k}v_{\tau_{k,m}+1} \mathbb{E}b_{\pi_{k}^{m}}(x_{k}^{m}) \\
        &= C\Lambda \sum_{k=1}^{K}\frac{L^{2}\eta_{k}^{2}}{\varepsilon} \sum_{m=1}^{n} c_{k,m}\mathbb{E}b_{\pi_{k}^{m}}(x_{k}^{m})
        \leq \frac{C\Lambda L^2\eta_{1}^{2}}{\varepsilon} \mathcal{S}_{X},
        \label{lemma, bound the overall bias, eq 5}
    \end{align}
    where the second equality is by interchanging the order of summation, the third equality is by the definition $c_{k,p}=\eta_{k}v_{\tau_{k,p}+1}$, the first inequality is by Lemma \ref{lemma, aux seq property}, and the last inequality is by $\eta_{k}\leq \eta_{1}$. For $S_{1,3}$, we have
    \begin{align}
    \notag
        S_{1,3} &= \sum_{k=1}^{K} \frac{L\eta_{k}^{2}}{\varepsilon n} \sum_{p=1}^{n} c_{k,p}(p-1)\sigma_{*}^{2} =  \sum_{k=1}^{K} \frac{L\eta_{k}^{3}\sigma_{*}^{2}}{\varepsilon n} \sum_{p=1}^{n} v_{\tau_{k,p}+1}(p-1) \\
    \notag
        &\leq C\Lambda \sum_{k=1}^{K} \frac{L\eta_{k}^{3}\sigma_{*}^{2}}{\varepsilon n}\sum_{p=1}^{n} v_{\tau_{k,p}+1} \frac{(p-1)(n-p+1)}{n-1} \\
    \notag
        &= C\Lambda \sum_{k=1}^{K}\frac{L}{\varepsilon n} \sum_{p=1}^{n}\left(\eta_{k} v_{\tau_{k,p}+1} \right) \eta_{k}^{2}\frac{(p-1)(n-p+1)}{n-1}\sigma_{*}^{2} \\
    \label{lemma, bound the overall bias, eq 6}
        &= \frac{CL\Lambda}{\varepsilon n}\sum_{k=1}^{K}\sum_{p=1}^{n}c_{k,p}\mathbb{E}\Vert d_{k}^{p}\Vert^2 = \frac{CL\Lambda }{\varepsilon n}\mathcal{S}_{D},
    \end{align}
    where the second equality is by the definition of $c_{k,p}$, the first inequality is by Lemma \ref{lemma, aux seq property}, and the fourth equality is by Lemma \ref{lemma, SWR lemma}. Plugging (\ref{lemma, bound the overall bias, eq 4}), (\ref{lemma, bound the overall bias, eq 5}), and (\ref{lemma, bound the overall bias, eq 6}) into (\ref{lemma, bound the overall bias, eq 3}) gives
    \begin{equation}
    \label{lemma, bound the overall bias, eq 7}
        \mathcal{B}_{1} \leq 2\varepsilon \mathcal{S}_{U} + \frac{64 C\Lambda L^{2}\eta_{1}^{2}}{\varepsilon} \mathcal{S}_{X} + \frac{48CL\Lambda}{\varepsilon n} \mathcal{S}_{D}.
    \end{equation}
    For the bounding of $\mathcal{B}_{2}=\sum_{t=1}^{M}\alpha_{t}v_{t+1}\mathbb{E} \Omega_{i_{t}}(x_{t})$, we rewrite $\mathcal{B}_{2}$ as follows
    \begin{equation*}
        \mathcal{B}_{2} = \sum_{k=1}^{K}\eta_{k}\sum_{p=1}^{n}v_{\tau_{k,p}+1} \mathbb{E}\Omega_{\pi_{k}^{p}}(x_{k}^{p})\leq \sum_{k=1}^{K}\eta_{k}\sum_{p=1}^{n}v_{\tau_{k,p}+1} \left| \mathbb{E}\Omega_{\pi_{k}^{p}}(x_{k}^{p})\right|.
    \end{equation*}
    By Lemma \ref{lemma, RR component vs finite-sum v2}, we have
    \begin{align}
    \notag
        \left| \mathcal{B}_{2} \right| &\leq \sum_{k=1}^{K}\sum_{p=1}^{n} 6\varepsilon \eta_{k}v_{\tau_{k,p}+1} \mathbb{E}B(u_{k}^{p}) + \sum_{k=1}^{K} \frac{128L^{2}\eta_{k}^{2}}{\varepsilon n} \sum_{p=1}^{n}\eta_{k}v_{\tau_{k,p}+1} \sum_{1\leq m<p}\mathbb{E}b_{\pi_{k}^{m}}(x_{k}^{m}) \\
    \notag
        & \quad +\sum_{k=1}^{K}\sum_{p=1}^{n}\eta_{k}v_{\tau_{k,p}+1}\frac{96L\eta_{k}^{2}(p-1)}{\varepsilon n} \sigma_{*}^{2} + \frac{4L}{\varepsilon}\sum_{k=1}^{K}\sum_{p=1}^{n} \eta_{k}v_{\tau_{k,p}+1}\mathbb{E}\Vert d_{k}^{p}\Vert^2 \\
    \notag
        &\leq 6\varepsilon \mathcal{S}_{U} + \frac{128L^{2}\eta_{1}^{2}}{\varepsilon} \sum_{k=1}^{K} \eta_{k} \sum_{m=1}^{n}\mathbb{E}b_{\pi_{k}^{m}}(x_{k}^{m}) \sum_{p=m+1}^{n} \frac{v_{\tau_{k,p}+1}}{n} \\
    \notag
        & \quad + \frac{96L}{\varepsilon n}\sum_{k=1}^{K}\eta_{k}^{3}\sigma_{*}^{2}\sum_{p=1}^{n} v_{\tau_{k,p}+1}(p-1) + \frac{4L}{\varepsilon} \mathcal{S}_{D} \\
    \notag
        &\leq 6\varepsilon \mathcal{S}_{U} + \frac{128C\Lambda L^{2}\eta_{1}^{2}}{\varepsilon}\sum_{k=1}^{K}\sum_{p=1}^{n}\eta_{k}v_{\tau_{k,m}+1}\mathbb{E}b_{\pi_{k}^{m}}(x_{k}^{m}) + \frac{4L}{\varepsilon}\mathcal{S}_{D} \\
    \notag
        & \quad + \frac{96C\Lambda L}{\varepsilon n} \sum_{k=1}^{K}\sum_{p=1}^{n} \eta_{k}v_{\tau_{k,p}+1} \eta_{k}^{2}\frac{(p-1)(n-p+1)}{n-1}\sigma_{*}^{2} \\
    \label{lemma, bound the overall bias, eq 8}
        &\leq 6\varepsilon \mathcal{S}_{U} + \frac{128C\Lambda L^{2}\eta_{1}^{2}}{\varepsilon}\mathcal{S}_{X} + \frac{96C\Lambda L}{\varepsilon n} \mathcal{S}_{D} + \frac{4L}{\varepsilon}\mathcal{S}_{D},
    \end{align}
    where the second inequality is by $\eta_{k}\leq \eta_{1}$, the third inequality is by Lemma \ref{lemma, aux seq property}, and the last inequality is by Lemma \ref{lemma, SWR lemma}. Note that
    \begin{equation*}
        \frac{48C\Lambda L}{\varepsilon n}+\frac{96C\Lambda L}{\varepsilon n} + \frac{4L}{\varepsilon} \leq \frac{144C\Lambda L}{\varepsilon} + \frac{4C\Lambda L}{\varepsilon} = \frac{148C\Lambda L}{\varepsilon},
    \end{equation*}
    where the inequality is by $n\geq 1$, $C\geq 1$, and $\Lambda\geq 1$. Combining the above inequality, (\ref{lemma, bound the overall bias, eq 7}), and (\ref{lemma, bound the overall bias, eq 8}) gives 
    \begin{equation}
    \label{lemma, bound the overall bias, eq 9}
        \mathcal{B} = \mathcal{B}_{1} - \mathcal{B}_{2} \leq 8\varepsilon \mathcal{S}_{U} + \frac{192C\Lambda L^{2}\eta_{1}^{2}}{\varepsilon}\mathcal{S}_{X} + \frac{148C\Lambda L}{\varepsilon}\mathcal{S}_{D}.
    \end{equation}
    We upper bound $\mathcal{S}_{X}$ with $\mathcal{S}_{U}$ and $\mathcal{S}_{D}$ as follows. By $L-$smoothness of $b_{\pi_{k}^{p}}(x_{k}^{p})$, 
    \begin{align*}
        b_{\pi_{k}^{p}}(x_{k}^{p}) & \leq b_{\pi_{k}^{p}}(u_{k}^{p}) + \langle \nabla b_{\pi_{k}^{p}}(u_{k}^{p}),x_{k}^{p}-u_{k}^{p}\rangle + \frac{L}{2}\Vert x_{k}^{p}-u_{k}^{p}\Vert^2 \\
        &\leq b_{\pi_{k}^{p}}(u_{k}^{p}) + \frac{\varepsilon}{2L}\Vert \nabla b_{\pi_{k}^{p}}(u_{k}^{p})\Vert^2 + \frac{L}{\varepsilon}\Vert x_{k}^{p}-u_{k}^{p}\Vert^2 + \frac{L}{2}\Vert x_{k}^{p}-u_{k}^{p}\Vert^2 \\
        &\leq (1+\varepsilon)b_{\pi_{k}^{p}}(u_{k}^{p}) + \frac{2L}{\varepsilon}\Vert x_{k}^{p}-u_{k}^{p}\Vert^2 \\
        &= (1+\varepsilon) B(u_{k}^{p}) + (1+\varepsilon)\Omega_{\pi_{k}^{p}}(u_{k}^{p}) + \frac{2L}{\varepsilon}\Vert x_{k}^{p}-u_{k}^{p}\Vert^2,
    \end{align*}
    where the second inequality is by Young's inequality, the third inequality is by Lemma \ref{lemma, special func property}, and the last equality is by Definition \ref{def, basics}. Then
    \begin{align*}
        \mathcal{S}_{X} &= \sum_{k=1}^{K}\sum_{p=1}^{n} c_{k,p}\mathbb{E}b_{\pi_{k}^{p}}(x_{k}^{p}) \\
        &\leq \sum_{k=1}^{K}\sum_{p=1}^{n} \left[(1+\varepsilon )c_{k,p}\mathbb{E}B(u_{k}^{p})  + 2L c_{k,p}\varepsilon^{-1}\mathbb{E}\Vert x_{k}^{p}-u_{k}^{p}\Vert^2 \right] \\
        & \quad + (1+\varepsilon)\sum_{k=1}^{K}\sum_{p=1}^{n}  c_{k,p}\left[2\varepsilon \mathbb{E}B(u_{k}^{p}) + \frac{64L^{2}\eta_{k}^{2}}{\varepsilon n} \sum_{1\leq m<p} \mathbb{E} b_{\pi_{k}^{m}}(x_{k}^{m}) + \frac{48L\eta_{k}^{2}(p-1)}{\varepsilon n}\sigma_{*}^{2}\right] \\
        &= 3(1+\varepsilon)\mathcal{S}_{U} + \frac{2L}{\varepsilon}\mathcal{S}_{D} + (1+\varepsilon) \sum_{k=1}^{K} \frac{64L^{2}\eta_{k}^{3}}{\varepsilon} \sum_{m=1}^{n-1}\mathbb{E}b_{\pi_{k}^{m}}(x_{k}^{m})\sum_{p=m+1}^{n}\frac{v_{\tau_{k,p}+1}}{n} \\
        & \quad + (1+\varepsilon) \sum_{k=1}^{K} \frac{48L\eta_{k}^{3}}{\varepsilon n}\sigma_{*}^{2}\sum_{p=1}^{n}v_{\tau_{k,p}+1}(p-1) \\
        &\leq 3(1+\varepsilon)\mathcal{S}_{U} + \frac{2L}{\varepsilon}\mathcal{S}_{D} + (1+\varepsilon)C\Lambda\sum_{k=1}^{K}\frac{64L^{2}\eta_{k}^{2}}{\varepsilon}\sum_{m=1}^{n} \eta_{k}v_{\tau_{k,m}+1}\mathbb{E}b_{\pi_{k}^{m}}(x_{k}^{m}) \\
        & \quad + (1+\varepsilon)C\Lambda\sum_{k=1}^{K} \frac{48L}{\varepsilon n}\sum_{p=1}^{n} (\eta_{k}v_{\tau_{k,p}+1})\eta_{k}^{2}\frac{(p-1)(n-p+1)}{n-1}\sigma_{*}^{2} \\
        &\leq 3(1+\varepsilon)\mathcal{S}_{U} + \frac{2L}{\varepsilon}\mathcal{S}_{D} + \frac{90 C \Lambda L^{2}\eta_{1}^{2}}{\varepsilon}\mathcal{S}_{X} + \frac{60 C\Lambda L}{\varepsilon n}\mathcal{S}_{D},
    \end{align*}
    where the first inequality is by Lemma \ref{lemma, RR component vs finite-sum}, the second equality is by interchanging the order of summation, the second inequality is by Lemma \ref{lemma, aux seq property}, and the last inequality is by Lemma \ref{lemma, SWR lemma}. Note that $C\geq 1, \Lambda \geq 1$, and $n\geq 1$, we have 
    \begin{equation*}
        \mathcal{S}_{X} \leq 3(1+\varepsilon)\mathcal{S}_{U} + \frac{90C\Lambda L^{2}\eta_{1}^{2}}{\varepsilon}\mathcal{S}_{X} + \frac{62C\Lambda L}{\varepsilon}\mathcal{S}_{D}.
    \end{equation*}
    By the settings
    \begin{equation*}
        \varepsilon = \frac{\lambda}{C}, \ \eta_{1}\leq \frac{1}{17 C^{3/2}\Lambda^{3/2}L},
    \end{equation*}
    we have $90C\Lambda L^{2}\eta_{1}^{2}\varepsilon^{-1}\leq \lambda/(2C)\leq 1/2$. Hence
    \begin{equation}
    \label{lemma, bound the overall bias, eq 10}
        \mathcal{S}_{X} \leq 6(1+\varepsilon)\mathcal{S}_{U} + \frac{124C\Lambda L}{\varepsilon}\mathcal{S}_{D}\leq 8\mathcal{S}_{U} + 124C^{2}\Lambda^{2}L\mathcal{S}_{D}.
    \end{equation}
    By the settings of $\varepsilon$ and $\eta_{1}$, we also have $192C\Lambda L^{2}\eta_{1}^{2}\varepsilon^{-1}\leq 3\lambda/(2C)$. Hence (\ref{lemma, bound the overall bias, eq 9}) implies
    \begin{equation*}
        \mathcal{B}\leq \frac{8\lambda}{C} \mathcal{S}_{U} + \frac{3\lambda}{2C}\mathcal{S}_{X} + 148C^{2}\Lambda^{2}L\mathcal{S}_{D}.
    \end{equation*}
    Plugging (\ref{lemma, bound the overall bias, eq 10}) into the above inequality gives
    \begin{align*}
        \mathcal{B} &\leq \frac{20\lambda}{C}\mathcal{S}_{U} + \frac{3}{2\cdot 40}124C^{2}\Lambda^{2}L\mathcal{S}_{D} + 148C^{2}\Lambda^{2}L\mathcal{S}_{D} \\
        &\leq \frac{20\lambda}{C}\mathcal{S}_{U} + 153C^{2}\Lambda^{2}L\mathcal{S}_{D} \leq \frac{\lambda}{2}\mathcal{S}_{U} + 153C^{2}\Lambda^{2}L\mathcal{S}_{D},
    \end{align*}
    which is the desired inequality.
\end{proof}

\subsection{A bound on the inexactness of the shadow sequence iteration}

\begin{lemma}
\label{lemma, weighted variance at optimum}
    We have
    \begin{equation*}
        \sum_{t=1}^{M}\frac{\alpha_{t}\mathbb{E}\Vert d_{t}\Vert^2}{H_{t}} \leq \sum_{k=1}^{K}\frac{n\eta_{k}^{3}\sigma_{*}^{2}}{A_{k+1}},
    \end{equation*}
    where $A_{k+1}$ is previously defined as $A_{k+1}=\sum_{j=k+1}^{K}\eta_{j}$.
\end{lemma}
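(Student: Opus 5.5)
The plan is to split the sum over $t\in[M]$ by epochs, evaluate $\mathbb{E}\Vert d_{t}\Vert^2$ exactly with Lemma \ref{lemma, SWR lemma}, and then lower bound $H_{t}$ crudely enough that the inner sum over the epoch becomes elementary.

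\textbf{The ghost term.} For $t=M$ we have $d_{M}=d_{K+1}^{1}=0$, since the ghost iteration starts at $u_{M}=x_{M}$. So the $M$-th term vanishes, and the sum reduces to $t\in[T]$.

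\textbf{Exact second moments.} For $t=\tau_{k,i}$ we have $\alpha_{t}=\eta_{k}$. By (\ref{SWR lemma, eq 2}),
\begin{equation*}
    \mathbb{E}\Vert d_{t}\Vert^2=\eta_{k}^{2}\frac{(i-1)(n-i+1)}{n-1}\sigma_{*}^{2}.
\end{equation*}

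\textbf{Lower bound on $H_{t}$.} Write
\begin{equation*}
    H_{\tau_{k,i}}=(n-i+1)\eta_{k}+nA_{k+1}+\beta\geq (n-i+1)\eta_{k}+nA_{k+1}.
\end{equation*}
The simplest route is to drop the first summand, which gives $H_{\tau_{k,i}}\geq nA_{k+1}$. Each epoch then contributes at most
\begin{equation*}
    \frac{\eta_{k}^{3}\sigma_{*}^{2}}{nA_{k+1}}\sum_{i=1}^{n}\frac{(i-1)(n-i+1)}{n-1}.
\end{equation*}

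\textbf{The combinatorial sum.} By the computation already done in Lemma \ref{lemma, aux seq property},
\begin{equation*}
    \sum_{i=1}^{n}(i-1)(n-i+1)=\frac{n(n-1)(n+1)}{6}.
\end{equation*}
Hence the inner sum equals $n(n+1)/6\leq n^{2}/3\leq n^{2}$, and epoch $k$ contributes at most $n\eta_{k}^{3}\sigma_{*}^{2}/A_{k+1}$. Summing over $k\in[K]$ gives the claim. The case $n=1$ is trivial, since then $d\equiv 0$.

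\textbf{Last epoch.} The only delicate point is $k=K$, where $A_{K+1}=0$. There the right-hand side is $+\infty$ under the convention $1/0=\infty$, so the bound holds trivially. If a finite bound is wanted for that epoch, I would keep $\beta$ in the denominator instead of dropping it: $H_{\tau_{K,i}}\geq \beta\geq\eta_{K}$, and this yields a contribution of order $n^{2}\eta_{K}^{2}\sigma_{*}^{2}$. Alternatively, keep the term $(n-i+1)\eta_{k}$, which cancels the factor $(n-i+1)$ and leaves a contribution of order $\eta_{k}^{2}\sigma_{*}^{2}n/2$.

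I expect no step here to be a real obstacle. The only care needed is the bookkeeping of the $H_{t}$ lower bound and the treatment of the last epoch.
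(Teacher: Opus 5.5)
Your argument is correct for the statement as written. Its skeleton is the paper's: split by epochs, discard the ghost term because $d_{M}=0$, insert (\ref{SWR lemma, eq 2}), and lower-bound $H_{t}$.

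The two routes differ only in the bound on $H_{\tau_{k,p}}=(n-p+1)\eta_{k}+nA_{k+1}+\beta$. The paper drops only $\beta$ and then uses $\frac{n-p+1}{n(A_{k+1}/\eta_{k})+(n-p+1)}\leq\frac{1}{A_{k+1}/\eta_{k}+1}$, which holds because $n-p+1\leq n$. After that only $\sum_{p}\frac{p-1}{n-1}=\frac{n}{2}$ is needed, and epoch $k$ contributes at most $\frac{n\eta_{k}^{3}\sigma_{*}^{2}}{2A_{k}}$. Only in the very last step is $A_{k}$ weakened to $A_{k+1}$. You drop $(n-i+1)\eta_{k}$ as well and use the exact sum $\frac{n(n+1)}{6}$.

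Your route is slightly simpler, but the paper's intermediate bound is worth more. It is finite at $k=K$, so no convention for $A_{K+1}=0$ is needed. Its denominators $A_{k}=\sum_{l=k}^{K}\eta_{l}$ are also exactly the ones in Theorem \ref{theorem, the hard bound}, which the $A_{k+1}$ version (with its infinite last summand) does not actually deliver.

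Your bound $\frac{(n+1)\eta_{k}^{3}\sigma_{*}^{2}}{6A_{k+1}}$ alone cannot give the $A_{k}$ form when $A_{k+1}\ll\eta_{k}$. To get it, apply your alternative bound $\frac{n\eta_{k}^{2}\sigma_{*}^{2}}{2}$ (the one that keeps $(n-i+1)\eta_{k}$) in every epoch, not just the last, and take the minimum of the two. That minimum is at most $\frac{n\eta_{k}^{3}\sigma_{*}^{2}}{A_{k}}$, because $\max\{\eta_{k},A_{k+1}\}\geq A_{k}/2$.
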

\begin{proof}
    We have
    \begin{align*}
        \sum_{t=1}^{M} \frac{\alpha_{t}\mathbb{E}\Vert d_{t}\Vert^2}{H_{t}} &= \sum_{k=1}^{K}\eta_{k}\sum_{p=1}^{n}\frac{\mathbb{E}\Vert d_{k}^{p}\Vert^2}{H_{\tau_{k,p}}} = \sigma_{*}^{2}\sum_{k=1}^{K}\eta_{k} \sum_{p=1}^{n} \frac{(p-1)(n-p+1)}{n-1}\frac{\eta_{k}^{2}}{H_{\tau_{k,p}}} \\
        &= \sigma_{*}^{2}\sum_{k=1}^{K}\eta_{k} \sum_{p=1}^{n} \frac{(p-1)(n-p+1)}{n-1}\frac{\eta_{k}^{2}}{nA_{k+1}+(n-p+1)\eta_{k}} \\
        &= \sigma_{*}^{2}\sum_{k=1}^{K}\eta_{k}^{2}\sum_{p=1}^{n}\frac{p-1}{n-1}\frac{n-p+1}{n(A_{k+1}/\eta_{k})+(n-p+1)} \\
        &\leq \sigma_{*}^{2}\sum_{k=1}^{K}\eta_{k}^{2}\sum_{p=1}^{n}\frac{p-1}{n-1}\frac{1}{(A_{k+1}/\eta_{k})+1} \\
        &= \sigma_{*}^{2}\sum_{k=1}^{K}\eta_{k}^{2}\frac{n}{2(A_{k+1}/\eta_{k})+2} \leq \sum_{k=1}^{K}\frac{n\eta_{k}^{3}\sigma_{*}^{2}}{A_{k+1}},
    \end{align*}
    where the second equality is by Lemma \ref{lemma, SWR lemma}.
\end{proof}

\section{Proof of Theorem \ref{theorem, the hard bound}}

\begin{proof}
    By Lemma \ref{lemma, one step lemma}, we have
    \begin{align}
    \notag
        B(u_{t})-B(z_{t+1}) &\leq \frac{\Vert u_{t}-z_{t+1}\Vert^2 - \Vert u_{t+1}-z_{t+1}\Vert^2}{2\alpha_{t}} + \Omega_{i_{t}}(z_{t+1})-\Omega_{i_{t}}(x_{t}) \\
    \label{theorem, the hard bound, eq 0}
        & \quad +(2L\alpha_{t}+\rho) b_{i_{t}}(z_{t+1}) + 2\rho B(u_{t}) + \frac{3L}{\rho} \Vert d_{t}\Vert^2,
    \end{align}
    where $\rho \coloneqq \frac{\lambda}{20}$ and the sequence $\{z_{t}\}_{t}$ is defined as follows
    \begin{equation}
    \label{theorem, the hard bound, eq 1}
        z_{t+1} \coloneqq \frac{v_{t}}{v_{t+1}}z_{t}+\left(1-\frac{v_{t}}{v_{t+1}} \right)u_{t}, \ z_{1} \coloneqq x_{*}.
    \end{equation}
    Multiplying both sides of (\ref{theorem, the hard bound, eq 0}) by $c_{t}\coloneqq\alpha_{t}v_{t+1}$ gives
    \begin{align}
    \notag
        c_{t}\left(B(u_{t})-B(z_{t+1})\right) &\leq \frac{v_{t+1}\Vert u_{t}-z_{t+1}\Vert^2 - v_{t+1}\Vert u_{t+1}-z_{t+1}\Vert^2}{2} + c_{t}\Omega_{i_{t}}(z_{t+1}) \\
    \label{theorem, the hard bound, eq 2}
        & \quad -c_{t}\Omega_{i_{t}}(x_{t})+(2L\alpha_{t}+\rho)c_{t} b_{i_{t}}(z_{t+1}) + 2\rho c_{t} B(u_{t}) + \frac{3L c_{t}}{\rho} \Vert d_{t}\Vert^2.
    \end{align}
    By (\ref{theorem, the hard bound, eq 1}), we have $z_{t+1}-u_{t}=(v_{t}/v_{t+1})(z_{t}-u_{t})$, then
    \begin{equation*}
        v_{t+1}\Vert u_{t}-z_{t+1}\Vert^2 = v_{t+1}\frac{v_{t}^{2}}{v_{t+1}^{2}}\Vert u_{t}-z_{t}\Vert^2 \leq v_{t}\Vert u_{t}-z_{t}\Vert^2,
    \end{equation*}
    where the inequality is by $v_{t+1}\geq v_{t}$. Hence
    \begin{equation*}
        \frac{v_{t+1}\Vert u_{t}-z_{t+1}\Vert^2 - v_{t+1}\Vert u_{t+1}-z_{t+1}\Vert^2}{2} \leq \frac{v_{t}\Vert u_{t}-z_{t}\Vert^2 - v_{t+1}\Vert u_{t+1}-v_{t+1}\Vert^2}{2},
    \end{equation*}
    and the RHS of the above inequality will telescope in the summation w.r.t. $t$. By (\ref{theorem, the hard bound, eq 1}), 
    \begin{equation}
    \label{theorem, the hard bound, eq 3}
        z_{t+1} = \frac{v_{1}}{v_{t+1}}x_{*} + \sum_{s=1}^{t}\Delta v_{s} u_{s},
    \end{equation}
    i.e., $z_{t+1}$ is a convex combination of the points $x_{*}, u_{1}, u_{2}, \cdots, u_{t}$. Hence
    \begin{align}
    \notag
        c_{t}\Omega_{i_{t}}(z_{t+1}) &= \alpha_{t} v_{t+1}\Omega_{i_{t}}(z_{t+1}) = \alpha_{t}v_{t+1}b_{i_{t}}(z_{t+1}) - c_{t}B(z_{t+1}) \\
    \notag
        &\leq \alpha_{t}v_{1}b_{i_{t}}(x_{*}) + \alpha_{t}\sum_{s=1}^{t}\Delta v_{s} b_{i_{t}}(u_{s}) - c_{t}B(z_{t+1}) \\
    \label{theorem, the hard bound, eq 4}
        &\leq \alpha_{t}\sum_{s=1}^{t}\Delta v_{s} b_{i_{t}}(u_{s}) - c_{t}B(z_{t+1}).
    \end{align}
    Summing (\ref{theorem, the hard bound, eq 2}) from $t=1$ to $M$ and taking expectation give
    \begin{align}
    \notag
        \sum_{t=1}^{M} c_{t}\mathbb{E}B(u_{t}) &\leq \frac{v_{1}\Vert x_{1}-x_{*}\Vert^2}{2} + \sum_{t=1}^{M}\alpha_{t} \sum_{s=1}^{t}\Delta v_{s} \mathbb{E} b_{i_{t}}(u_{s}) - \sum_{t=1}^{M} c_{t}\mathbb{E}\Omega_{i_{t}}(x_{t}) \\
    \label{theorem, the hard bound, eq 5}
        & \quad + \tilde{R}_{Z} + 2\rho \sum_{t=1}^{M}c_{t}\mathbb{E}B(u_{t}) + \tilde{R}_{D}, 
    \end{align}
    where the quantities $\tilde{R}_{Z}$ and $\tilde{R}_{D}$ are defined as follows
    \begin{equation*}
        \tilde{R}_{Z} \coloneqq \sum_{t=1}^{M} (2L\alpha_{t}+\rho)c_{t}\mathbb{E}b_{i_{t}}(z_{t+1}), \ \tilde{R}_{D} \coloneqq \frac{3L}{\rho} \sum_{t=1}^{M} c_{t}\mathbb{E} \Vert d_{t}\Vert^2.
    \end{equation*}
    For the second term in the RHS of (\ref{theorem, the hard bound, eq 5}), we have
    \begin{align*}
        \sum_{t=1}^{M}\alpha_{t}\sum_{s=1}^{t}\Delta v_{s}b_{i_{t}}(u_{s}) &= \sum_{t=1}^{M} \alpha_{t}\sum_{s=1}^{t}\Delta v_{s}B(u_{s}) + \sum_{t=1}^{M}\alpha_{t}\sum_{s=1}^{t}\Delta v_{s}\Omega_{i_{t}}(u_{s}) \\
        &= \sum_{s=1}^{M} \Delta v_{s}B(u_{s})\sum_{t=s}^{M}\alpha_{t} + \sum_{t=1}^{M}\alpha_{t}\sum_{s=1}^{t}\Delta v_{s}\Omega_{i_{t}}(u_{s}) \\
        &=\sum_{s=1}^{M}\Delta v_{s}H_{s}B(u_{s}) + \sum_{t=1}^{M}\alpha_{t}\sum_{s=1}^{t}\Delta v_{s}\Omega_{i_{t}}(u_{s}) \\
        &= \sum_{t=1}^{M} a c_{t} B(u_{t}) + \sum_{t=1}^{M}\alpha_{t}\sum_{s=1}^{t}\Delta v_{s}\Omega_{i_{t}}(u_{s}),
    \end{align*}
    where the second equality is by interchanging the order of summation, and the last equality is by Definition \ref{def, auxiliary seq}. Hence
    \begin{equation}
    \label{theorem, the hard bound, eq 6}
        \mathbb{E} \sum_{t=1}^{M}\alpha_{t}\sum_{s=1}^{t}\Delta v_{s}b_{i_{t}}(u_{s}) = a \sum_{t=1}^{M}c_{t}\mathbb{E}B(u_{t}) + \sum_{t=1}^{M}\alpha_{t}\sum_{s=1}^{t}\Delta v_{s}\mathbb{E}\Omega_{i_{t}}(u_{s}).
    \end{equation}
    Denote $D\coloneqq \Vert x_{1}-x_{*}\Vert$, recall the definitions in Lemma \ref{lemma, bound the overall bias}, plug (\ref{theorem, the hard bound, eq 6}) into (\ref{theorem, the hard bound, eq 5}) and note that $\lambda = 1-a$, we have
    \begin{equation}
    \label{theorem, the hard bound, eq 7}
        \lambda \mathcal{S}_{U} \leq \frac{v_{1}D^{2}}{2} + \mathcal{B} +\tilde{R}_{Z} +2\rho \mathcal{S}_{U} + \tilde{R}_{D}.
    \end{equation}
    Denote $\tilde{\rho} \coloneqq \max_{1\leq t\leq M} (2L\alpha_{t}+\rho)=2L\eta_{1}+\rho$. We have
    \begin{align*}
        \tilde{R}_{Z} &= \sum_{t=1}^{M} (2L\alpha_{t}+\rho) c_{t}\mathbb{E} b_{i_{t}}(z_{t+1}) \leq \tilde{\rho}\sum_{t=1}^{M}\alpha_{t} v_{t+1}\mathbb{E}b_{i_{t}}(z_{t+1}) \\
        &\leq \tilde{\rho} \sum_{t=1}^{M} \alpha_{t} \sum_{s=1}^{t} \Delta v_{s}\mathbb{E}b_{i_{t}}(u_{s}) = \tilde{\rho} \sum_{t=1}^{M}\alpha_{t} \sum_{s=1}^{t}\Delta v_{s} \left[\mathbb{E}B(u_{s})+ \mathbb{E}\Omega_{i_{t}}(u_{s})\right] \\
        &= \tilde{\rho} \sum_{s=1}^{M}\Delta v_{s}\mathbb{E}B(u_{s})\sum_{t=s}^{M} \alpha_{t} + \tilde{\rho} \sum_{s=1}^{M}\Delta v_{s}\sum_{t=s}^{M}\alpha_{t} \mathbb{E}\Omega_{i_{t}}(u_{s}) \\
        &\leq \tilde{\rho} \sum_{s=1}^{M} \Delta v_{s} H_{s} \mathbb{E}B(u_{s}) + \tilde{\rho} \left| \mathcal{B} \right|_{\rm{up}} \leq \tilde{\rho} \sum_{t=1}^{M}c_{t}\mathbb{E}B(u_{t}) + \tilde{\rho}\left| \mathcal{B} \right|_{\rm{up}},
    \end{align*}
    where the second inequality is by (\ref{theorem, the hard bound, eq 3}), the third equality is by interchanging the order of summation, and the last inequality is by Definition \ref{def, auxiliary seq} (where it is shown that $\Delta v_{s} H_{s} = a\alpha_{s}v_{s+1} \leq \alpha_{s}v_{s+1} =c_{s}$)\footnote{For the third inequality, note that $|\mathcal{B}|_{\rm{up}}$ in Lemma \ref{lemma, bound the overall bias} contains a summand which is the upper bound of $\sum_{s=1}^{M}\Delta v_{s}\sum_{t=s}^{M}\alpha_{t}|\mathbb{E}\Omega_{i_{t}}(u_{s})|$.}. Recall the definition of $\mathcal{S}_{U}$, the above inequality can be written as
    \begin{equation}
    \label{theorem, the hard bound, eq 8}
        \tilde{R}_{Z} \leq \tilde{\rho} \mathcal{S}_{U} + \tilde{\rho} \left| \mathcal{B} \right|_{\rm{up}}.
    \end{equation}
    Plugging (\ref{theorem, the hard bound, eq 8}) into (\ref{theorem, the hard bound, eq 7}) gives
    \begin{align*}
        \lambda \mathcal{S}_{U} &\leq \frac{v_{1}D^{2}}{2} + (1+\tilde{\rho})\left| \mathcal{B}\right|_{\rm{up}} + 3\tilde{\rho}\mathcal{S}_{U} + \tilde{R}_{D} \\
        &\leq \frac{v_{1}D^{2}}{2} + \left[2^{-1}(1+\tilde{\rho})\lambda + 3\tilde{\rho} \right]\mathcal{S}_{U} + \left[153(1+\tilde{\rho})C^{2}\Lambda^{2}L + 3L\rho^{-1} \right]\mathcal{S}_{D} \\
        &\leq \frac{v_{1}D^{2}}{2} + \frac{17\lambda}{20}\mathcal{S}_{U} + 171C^{2}\Lambda^{2}L\mathcal{S}_{D},
    \end{align*}
    where the second inequality is by Lemma \ref{lemma, bound the overall bias}, and the last inequality is by the settings of $\eta_{1}$ and $\rho$. Rearranging the above inequality gives
    \begin{equation}
    \label{theorem, the hard bound, eq 9}
        \lambda \sum_{t=1}^{M}\alpha_{t}v_{t+1}\mathbb{E}B(u_{s}) = \lambda \mathcal{S}_{U} \leq 5v_{1}D^{2} + 1710C^{2}\Lambda^{2}L\mathcal{S}_{D}.
    \end{equation}
    By Definition \ref{def, auxiliary seq}, $\alpha_{M}=\beta, v_{M+1}=1$. By Definition \ref{def, basics} and Definition \ref{def, global index}, $u_{M}=x_{M}$ is the last iterate of the algorithm. Dropping all terms except for $\lambda \alpha_{M} v_{M+1}\mathbb{E}B(u_{M})$ in the LHS of (\ref{theorem, the hard bound, eq 9}) gives
    \begin{equation*}
        \lambda \beta \mathbb{E}B(x_{M}) \leq 5v_{1}D^{2} + 1710C^{2}\Lambda^{2}L \sum_{t=1}^{M}\alpha_{t}v_{t+1}\mathbb{E}\Vert d_{t}\Vert^2.
    \end{equation*}
    Dividing both sides by $\lambda \beta$ and using Lemma \ref{lemma, aux seq property}, we have
    \begin{equation*}
        \mathbb{E}B(x_{M}) \leq \frac{5CD^{2}}{H_{1}} + 1710C^{3}\Lambda^{2}L\sum_{t=1}^{M}\frac{\alpha_{t}}{H_{t}}\mathbb{E}\Vert d_{t}\Vert^2.
    \end{equation*}
    By Lemma \ref{lemma, weighted variance at optimum}, we further have
    \begin{equation*}
        \mathbb{E} \left[f(x_{M})-f_{*} \right] \leq \frac{5CD^{2}}{H_{1}} + 1710C^{3}\Lambda^{2}L\sum_{k=1}^{K} \frac{n\sigma_{*}^{2}\eta_{k}^{3}}{A_{k+1}},
    \end{equation*}
    which is the desired inequality.
\end{proof}

\section{Technical lemmas for the proof of Theorem \ref{theorem, the other bound}}

In the subsequent proofs, we will adopt many notations defined before, the reader could refer to Definition \ref{def, basics}, Definition \ref{def, global index}, and Definition \ref{def, auxiliary seq} for them. Also, we mention some notations defined in the proofs of former propositions that will still be used subsequently. The reference point sequence $\{z_{t}\}_{t}$ is still defined as (\ref{theorem, the hard bound, eq 1}), and $A_{k} = \sum_{j=k}^{K}\eta_{j}$ is the sum of the stepsizes used from the  $k^{\rm{th}}$ epoch to the $K^{\rm{th}}$ epoch. Also, the following new definition is needed.

\begin{definition}
\label{def, the other bound, basics}
    Define\footnote{We use notation $G$ rather than $B$ here to emphasize that $G$ is the finite-sum counterpart of $g_{i}$.}
    \begin{equation*}
        g_{i}(x)\coloneqq f_{i}(x)-f_{i}(x_{*}), \ G(x)\coloneqq f(x)-f(x_{*}), \ \tilde{\Omega}_{i}(x)\coloneqq g_{i}(x)-G(x),
    \end{equation*}
    where $i\in [n]$. It is immediate from the definitions that $G(x)=n^{-1}\sum_{i=1}^{n}g_{i}(x)$.
\end{definition}

Since the subsequent proofs share many common arguments with former proofs, we omit corresponding comments for brevity.

\subsection{An one-step descent inequality}

\begin{lemma}
\label{lemma, the other bound, one step inequality}
    Consider $x^{+}=x-\alpha \nabla f_{i}(x)$ where $0<\alpha\leq 1/(6L)$. For any $z\in\mathbb{R}^{d}$, we have
    \begin{equation*}
        G(x)-G(z) \leq \frac{\Vert x-z\Vert^2 - \Vert x^{+}-z\Vert^2}{2\alpha} + \tilde{\Omega}_{i}(z) - \tilde{\Omega}_{i}(x) -\frac{1}{2}b_{i}(z,x) + 3L\alpha b_{i}(z) + \frac{3\alpha}{2}\Vert q_{i}\Vert^2,
    \end{equation*}
    where $b_{i}(x,y)\coloneqq D_{f_{i}}(x,y)$ is the Bregman divergence associated with the component function $f_{i}$.
\end{lemma}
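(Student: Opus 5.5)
The plan is to mirror the first lines of the proof of Lemma \ref{lemma, one step lemma}, using the exact three-point identity instead of a smoothness inequality, and then to split the leftover squared-gradient term into three pieces. Throughout, $b_i(z,x)=D_{f_i}(z,x)$ and $b_i(z)=D_{f_i}(z,x_*)$.

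\textbf{Step 1 (distance expansion).} From $x^{+}=x-\alpha\nabla f_i(x)$ we have
\begin{equation*}
\Vert x^{+}-z\Vert^2=\Vert x-z\Vert^2-2\alpha\langle \nabla f_i(x),x-z\rangle+\alpha^2\Vert\nabla f_i(x)\Vert^2 .
\end{equation*}
By the definition of the Bregman divergence,
\begin{equation*}
\langle \nabla f_i(x),x-z\rangle=f_i(x)-f_i(z)+b_i(z,x)=g_i(x)-g_i(z)+b_i(z,x).
\end{equation*}
Substituting the second identity into the first and dividing by $2\alpha$ gives the exact relation
\begin{equation*}
g_i(x)-g_i(z)=\frac{\Vert x-z\Vert^2-\Vert x^{+}-z\Vert^2}{2\alpha}-b_i(z,x)+\frac{\alpha}{2}\Vert\nabla f_i(x)\Vert^2 .
\end{equation*}
Now write $g_i=G+\tilde{\Omega}_i$ from Definition \ref{def, the other bound, basics}. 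This turns the left-hand side into $G(x)-G(z)$ and produces the term $\tilde{\Omega}_i(z)-\tilde{\Omega}_i(x)$ on the right.

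\textbf{Step 2 (the main step).} It then suffices to show
\begin{equation*}
\frac{\alpha}{2}\Vert\nabla f_i(x)\Vert^2\leq \frac12 b_i(z,x)+3L\alpha\, b_i(z)+\frac{3\alpha}{2}\Vert q_i\Vert^2 .
\end{equation*}
This is where the specific constants must come out exactly. I would decompose
\begin{equation*}
\nabla f_i(x)=\bigl(\nabla f_i(x)-\nabla f_i(z)\bigr)+\bigl(\nabla f_i(z)-q_i\bigr)+q_i
\end{equation*}
and apply $\Vert a+b+c\Vert^2\leq 3(\Vert a\Vert^2+\Vert b\Vert^2+\Vert c\Vert^2)$. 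This bounds $\frac{\alpha}{2}\Vert\nabla f_i(x)\Vert^2$ by $\frac{3\alpha}{2}$ times the sum of the three squared norms. Each piece is then handled separately.

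1. For the first piece, convexity and $L$-smoothness of $f_i$ give the standard inequality
\begin{equation*}
\Vert\nabla f_i(x)-\nabla f_i(z)\Vert^2\leq 2L\,D_{f_i}(z,x)=2L\,b_i(z,x).
\end{equation*}
So this piece contributes at most $3L\alpha\, b_i(z,x)$, which is at most $\frac12 b_i(z,x)$ because $\alpha\leq 1/(6L)$.

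2. For the second piece, note that $\nabla f_i(z)-q_i=\nabla b_i(z)$. By (\ref{lemma, special func property, eq 1}) of Lemma \ref{lemma, special func property}, this piece is bounded by $2L\,b_i(z)$ and contributes $3L\alpha\, b_i(z)$.

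3. The third piece contributes exactly $\frac{3\alpha}{2}\Vert q_i\Vert^2$.

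\textbf{Step 3 (combine).} Inserting the bound of Step 2 into the identity of Step 1 gives the net coefficient $-1+\frac12=-\frac12$ on $b_i(z,x)$. This yields exactly the claimed inequality. No step appears genuinely difficult; the only care needed is that the factor $3$ from the three-term split, combined with $\alpha\leq 1/(6L)$, is precisely what turns $3L\alpha\,b_i(z,x)$ into $\frac12 b_i(z,x)$.
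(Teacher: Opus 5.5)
Your proposal is correct and follows the paper's proof essentially step for step. Both use the exact three-point identity, the split $\nabla f_i(x)=(\nabla f_i(x)-\nabla f_i(z))+\nabla b_i(z)+q_i$ with the factor $3$, co-coercivity and (\ref{lemma, special func property, eq 1}) to obtain $6L\,b_i(z,x)+6L\,b_i(z)+3\Vert q_i\Vert^2$, and finally $6L\alpha\leq 1$ to turn $(3L\alpha-1)b_i(z,x)$ into $-\frac12 b_i(z,x)$ before rewriting $g_i=G+\tilde{\Omega}_i$.
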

\begin{proof}
    Since $x^{+}=x-\alpha\nabla f_{i}(x)$,
    \begin{equation*}
        \Vert x^{+}-z\Vert^2 = \Vert x-\alpha\nabla f_{i}(x)-z\Vert^2 = \Vert x-z\Vert^2 -2\alpha \langle \nabla f_{i}(x),x-z\rangle + \alpha^2 \Vert \nabla f_{i}(x)\Vert^2.
    \end{equation*}
    By the definition of the Bregman divergence,
    \begin{equation*}
        b_{i}(z,x) = f_{i}(z) - f_{i}(x) -\langle \nabla f_{i}(x),z-x\rangle.
    \end{equation*}
    Combining the above two equations gives
    \begin{equation}
    \label{lemma, the other bound, one step inequality, eq 1}
        f_{i}(x) - f_{i}(z) = \frac{\Vert x-z\Vert^2 - \Vert x^{+}-z\Vert^2}{2\alpha} - b_{i}(z,x) + \frac{\alpha}{2} \Vert \nabla f_{i}(x)\Vert^2.
     \end{equation}
     Note that
     \begin{align*}
         \Vert \nabla f_{i}(x)\Vert^2 &\leq 3\Vert \nabla f_{i}(x)-\nabla f_{i}(z)\Vert^2 + 3\Vert \nabla f_{i}(z)-\nabla f_{i}(x_{*})\Vert^3 + 3\Vert \nabla f_{i}(x_{*})\Vert^2 \\
         &\leq 6L b_{i}(z,x) + 3\Vert \nabla b_{i}(z)\Vert^2 + 3\Vert q_{i}\Vert^2 
         \leq 6L b_{i}(z,x) + 6L b_{i}(z) +3\Vert q_{i}\Vert^2,
     \end{align*}
     where the second inequality is by convexity and $L$-smoothness of $f_{i}$ and Definition \ref{def, basics}, and the last inequality is by Lemma \ref{lemma, special func property}. Plugging the above inequality into (\ref{lemma, the other bound, one step inequality, eq 1}) gives
     \begin{align}
     \notag
         f_{i}(x) - f_{i}(z) &\leq \frac{\Vert x-z\Vert^2 - \Vert x^{+}-z\Vert^2}{2\alpha} + \left(3L\alpha - 1 \right)b_{i}(z,x) + 3L\alpha b_{i}(z) + \frac{3\alpha}{2}\Vert q_{i}\Vert^2 \\
    \label{lemma, the other bound, one step inequality, eq 2}
         &\leq \frac{\Vert x-z\Vert^2 - \Vert x^{+}-z\Vert^2}{2\alpha} - \frac{1}{2}b_{i}(z,x) + 3L\alpha b_{i}(z) +\frac{3\alpha}{2}\Vert q_{i}\Vert^2,
     \end{align}
     where the second inequality is by the stepsize condition $6L\alpha\leq 1$. Also note that
     \begin{equation}
     \label{lemma, the other bound, one step inequality, eq 3}
         f_{i}(x)-f_{i}(z) = g_{i}(x)-g_{i}(z) = G(x)-G(z)+\tilde{\Omega}_{i}(x)-\tilde{\Omega}_{i}(z).
     \end{equation}
     Plugging (\ref{lemma, the other bound, one step inequality, eq 3}) into (\ref{lemma, the other bound, one step inequality, eq 2}) gives the desired equation.
\end{proof}

\subsection{Bounds on component-function deviations}

\begin{lemma}
    \label{lemma, the other bound, RR component vs finite-sum}
    Assume $1\leq p \leq r \leq n$ and $L\eta_{k}\leq 1$ in the algorithm. Then the following inequalities holds
    \begin{align*}
        \left| \mathbb{E}\tilde{\Omega}_{\pi_{k}^{r}}(x_{k}^{p})\right| &\leq 4\eta_{k} \left[\mathbb{E}\Vert \nabla f_{\pi_{k}^{p}}(x_{k}^{p})\Vert^2 + \frac{1}{n}\sum_{j=1}^{p-1}\mathbb{E}\Vert \nabla f_{\pi_{k}^{j}}(x_{k}^{j})\Vert^2 + \sigma_{*}^{2}\right], \\
        \left| \mathbb{E}\Omega_{\pi_{k}^{r}}(x_{k}^{p})\right| &\leq 4\eta_{k} \left[\mathbb{E}\Vert \nabla f_{\pi_{k}^{p}}(x_{k}^{p})\Vert^2 + \frac{1}{n}\sum_{j=1}^{p-1}\mathbb{E}\Vert \nabla f_{\pi_{k}^{j}}(x_{k}^{j})\Vert^2 + \sigma_{*}^{2}\right].
    \end{align*}
\end{lemma}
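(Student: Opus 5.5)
We follow the exchangeable-pairs argument from the proof of Lemma \ref{lemma, RR component vs finite-sum}, but now evaluate at $x_{k}^{p}$ instead of $u_{k}^{p}$, so no shift $d_{k}^{p}$ appears. Both inequalities are handled together. Write $\phi_{i}$ for either $g_{i}$ or $b_{i}$ and $\Phi$ for the corresponding average $G$ or $B$. Note that $\nabla g_{i}=\nabla f_{i}$ and $\nabla b_{i}=\nabla f_{i}-q_{i}$, and that $\tilde{\Omega}_{i}-\Omega_{i}=\langle q_{i},\cdot-x_{*}\rangle$ plus a constant; the two cases differ only in which gradients appear.

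\textbf{Step 1: reduction to swaps.} Since $\{\pi_{k}^{m}\}_{m}=[n]$,
\[
\Phi(x_{k}^{p})-\phi_{\pi_{k}^{r}}(x_{k}^{p})=\frac1n\sum_{m=1}^{n}\bigl(\phi_{\pi_{k}^{m}}(x_{k}^{p})-\phi_{\pi_{k}^{r}}(x_{k}^{p})\bigr).
\]
We use $(\pi_{k}(r,m),x_{k}^{p}(r,m))\overset{\mathcal{D}}{=}(\pi_{k},x_{k}^{p})$. For $m\ge p$, the point $x_{k}^{p}$ is unaffected by the swap (here $r\ge p$ is used). Hence
\[
\mathbb{E}\Omega=-\frac1n\sum_{m<p}\mathbb{E}\bigl[\phi_{\pi_{k}^{r}}(x_{k}^{p}(r,m))-\phi_{\pi_{k}^{r}}(x_{k}^{p})\bigr].
\]

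\textbf{Step 2: Lipschitz-type bound.} Unlike before, we will not use Lemma \ref{lemma, special func property}. Instead we use the smoothness bound
\[
|\phi(y)-\phi(x)|\le \|\nabla\phi(x)\|\|y-x\|+\tfrac L2\|y-x\|^{2}.
\]
By Lemma \ref{lemma, diff btw virtual and real seq} (valid since $L\eta_{k}\le 1\le 2$),
\[
\|x_{k}^{p}(r,m)-x_{k}^{p}\|\le \eta_{k}\|\nabla f_{\pi_{k}^{r}}(x_{k}^{m})-\nabla f_{\pi_{k}^{m}}(x_{k}^{m})\|.
\]
Young's inequality with weight $\eta_{k}$ then gives, for each $m<p$, a bound of the form
\[
\frac{\eta_{k}}{2}\|\nabla\phi_{\pi_{k}^{r}}(x_{k}^{p})\|^{2}+\eta_{k}(1+L\eta_{k})\,\|\nabla f_{\pi_{k}^{r}}(x_{k}^{m})-\nabla f_{\pi_{k}^{m}}(x_{k}^{m})\|^{2},
\]
and we use $L\eta_{k}\le1$ to control the factor $1+L\eta_{k}$.

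\textbf{Step 3: symmetrization.} Take expectations and use the conditional uniformity of the unseen indices.

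For $r>m$, $\mathbb{E}\|\nabla f_{\pi_{k}^{r}}(x_{k}^{m})\|^{2}$ must be turned into a quantity along the trajectory. This is the main obstacle. The clean route is to reuse the symmetry of the first term. Write
\[
\nabla f_{\pi_{k}^{r}}(x_{k}^{m})=\bigl(\nabla f_{\pi_{k}^{r}}(x_{k}^{m})-q_{\pi_{k}^{r}}\bigr)+q_{\pi_{k}^{r}},
\]
and then use $\|\nabla b_{i}(x)\|^{2}\le 2L\,b_{i}(x)$ together with $\mathbb{E}b_{\pi_{k}^{r}}(x_{k}^{m})=\mathbb{E}b_{\pi_{k}^{m}}(x_{k}^{m})$. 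Since $b_{i}(x)\le \frac1{2L}\|\nabla f_{i}(x)-q_{i}\|^{2}$ is not available (only the reverse inequality holds), I would instead work directly with squared-gradient averages:
\[
\mathbb{E}\|\nabla f_{\pi_{k}^{r}}(x_{k}^{m})\|^{2}=\mathbb{E}\frac{1}{n-m}\sum_{l>m}\|\nabla f_{\pi_{k}^{l}}(x_{k}^{m})\|^{2}.
\]
This average is bounded by $\frac{n}{n-m}\cdot 2\bigl(\|\nabla f(x_{k}^{m})\|^{2}\text{-type terms}\bigr)$. If that becomes too lossy, I would fall back on bounding it by $\frac{n}{n-m}\cdot\frac1n\sum_{i}\|\nabla f_{i}(x_{k}^{m})\|^{2}$ and relating $\frac1n\sum_i\|\nabla f_i(x)\|^2\le 4LB(x)+2\sigma_{*}^{2}$-type quantities back to $\|\nabla f_{\pi_{k}^{m}}(x_{k}^{m})\|^{2}$ in expectation, via the exchangeability $\mathbb{E}\|\nabla f_{\pi_{k}^{m}}(x_{k}^{m})\|^{2}=\mathbb{E}\|\nabla f_{\pi_{k}^{r}}(x_{k}^{m})\|^{2}$ conditioned on the first $m-1$ indices.

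\textbf{Step 4: collecting terms.} The first term $\frac{\eta_{k}}{2}\mathbb{E}\|\nabla\phi_{\pi_{k}^{r}}(x_{k}^{p})\|^{2}$ is of size at most
\[
\eta_{k}\bigl(\mathbb{E}\|\nabla f_{\pi_{k}^{p}}(x_{k}^{p})\|^{2}+\sigma_{*}^{2}\bigr),
\]
after the same exchange $r\leftrightarrow p$ and using $\|q_{i}\|^{2}$ averaging to $\sigma_{*}^{2}$ (the $-q_{i}$ shift appears in the $b$ case). Summing over $m<p$ with the prefactor $1/n$ gives $\frac{1}{n}\sum_{j<p}$ terms and $\frac{p-1}{n}\le1$ copies of the $\sigma_{*}^{2}$ and $p$-th-gradient terms. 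The constant $4$ absorbs all the numerical factors.
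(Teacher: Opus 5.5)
Your Steps 1, 2 and 4 follow the paper's argument: the swap reduction, the convexity-plus-smoothness bound, Young's inequality with weight $\eta_k$, Lemma \ref{lemma, diff btw virtual and real seq}, and $L\eta_k\le 1$. The paper first replaces $\pi_k^r$ by $\pi_k^p$, using conditional exchangeability given $\pi_k^1,\dots,\pi_k^{p-1}$, and then swaps $(p,j)$, so Lemma \ref{lemma, diff btw virtual and real seq} applies verbatim. Your direct $(r,m)$ swap with $r>p$ uses the same nonexpansiveness argument slightly outside that lemma's literal statement, which is harmless.

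\textbf{The gap is Step 3.} What you call the main obstacle is an exact one-line identity, and it is exactly what the paper uses. For $m<p\le r$, $x_k^m$ is determined by $\pi_1,\dots,\pi_{k-1},\pi_k^1,\dots,\pi_k^{m-1}$. Conditionally on these, $\pi_k^r$ and $\pi_k^m$ are both uniform on the $n-m+1$ unused indices, so
\[
\mathbb{E}\Vert\nabla f_{\pi_k^r}(x_k^m)\Vert^2=\mathbb{E}\Vert\nabla f_{\pi_k^m}(x_k^m)\Vert^2 .
\]
The routes you actually sketch do not prove the lemma.
\begin{itemize}
\item Passing to $\frac{n}{n-m}\cdot\frac1n\sum_i\Vert\nabla f_i(x_k^m)\Vert^2$ costs a factor up to $n$ (at $m=n-1$), which is incompatible with the $n$-free constant.
\item Once you have the full average, you cannot get back to $\mathbb{E}\Vert\nabla f_{\pi_k^m}(x_k^m)\Vert^2$. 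The latter averages only over unused indices, while the full average also contains $\Vert\nabla f_{\pi_k^j}(x_k^m)\Vert^2$ for the already-used positions $j<m$. These are not trajectory quantities of the form on the right-hand side.
\item The variant through $4LB(x)+2\sigma_*^2$ fails too, since $B$ is not bounded by squared gradients.
\end{itemize}
You do state the correct exchangeability at the very end, but only as a patch on the averaged bound, where it cannot be used. Apply it directly to $\mathbb{E}\Vert\nabla f_{\pi_k^r}(x_k^m)\Vert^2$ and the detour disappears.

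\textbf{Second, your Young step loses a factor $2$, and that breaks the constant $4$.} Write $\Delta=\nabla f_{\pi_k^r}(x_k^m)-\nabla f_{\pi_k^m}(x_k^m)$. The sharp bound is
\[
\Bigl(\frac{1}{2\eta_k}+\frac L2\Bigr)\Vert x_k^p(r,m)-x_k^p\Vert^2\le\frac{\eta_k(1+L\eta_k)}{2}\Vert\Delta\Vert^2\le\eta_k\Vert\Delta\Vert^2,
\]
not $\eta_k(1+L\eta_k)\Vert\Delta\Vert^2$. With your factor, $\Vert\Delta\Vert^2\le 2\Vert\nabla f_{\pi_k^r}(x_k^m)\Vert^2+2\Vert\nabla f_{\pi_k^m}(x_k^m)\Vert^2$ and the identity above give the coefficient $8$ in front of $\frac1n\sum_{j<p}\mathbb{E}\Vert\nabla f_{\pi_k^j}(x_k^j)\Vert^2$. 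So ``the constant $4$ absorbs all the numerical factors'' is false as written. With the sharp factor, each $m<p$ contributes at most
\[
\eta_k\mathbb{E}\Vert\nabla f_{\pi_k^p}(x_k^p)\Vert^2+\eta_k\sigma_*^2+4\eta_k\mathbb{E}\Vert\nabla f_{\pi_k^m}(x_k^m)\Vert^2,
\]
where the $\sigma_*^2$ term is needed only for $b_i$. The prefactor $\frac1n$ together with $\frac{p-1}{n}\le 1$ then yields the stated bound.
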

\begin{proof}
    Conditioned on $\pi_{1}, \cdots, \pi_{k-1}, \pi_{k}^{1}, \pi_{k}^{2},\cdots, \pi_{k}^{p-1}$, the r.v. $\pi_{k}^{p}$ has the same distribution as $\pi_{k}^{r}$, hence
    \begin{equation*}
        \mathbb{E} \tilde{\Omega}_{\pi_{k}^{r}}(x_{k}^{p}) = \mathbb{E}\tilde{\Omega}_{\pi_{k}^{p}}(x_{k}^{p}).
    \end{equation*}
    In a way similar to the beginning of the proof of Lemma \ref{lemma, RR component vs finite-sum}, we have
    \begin{equation}
    \label{lemma, the other bound, RR component vs finite-sum, eq 1}
        \left| \mathbb{E}\tilde{\Omega}_{\pi_{k}^{p}}(x_{k}^{p}) \right| \leq \frac{1}{n}\sum_{1\leq j<p} \left| \mathbb{E}\left[g_{\pi_{k}^{p}}(x_{k}^{p}(p,j))-g_{\pi_{k}^{p}}(x_{k}^{p}) \right] \right| \leq \frac{1}{n}\sum_{1\leq j<p} \mathbb{E} \left| g_{\pi_{k}^{p}}(x_{k}^{p}(p,j))-g_{\pi_{k}^{p}}(x_{k}^{p}) \right|,
    \end{equation}
    where the inequality is by Jensen's inequality. We have
    \begin{align*}
        \left| g_{\pi_{k}^{p}}(x_{k}^{p}(p,j))-g_{\pi_{k}^{p}}(x_{k}^{p}) \right| &\leq \Vert \nabla g_{\pi_{k}^{p}}(x_{k}^{p})\Vert \Vert x_{k}^{p}(p,j)-x_{k}^{p}\Vert + \frac{L}{2} \Vert x_{k}^{p}(p,j)-x_{k}^{p}\Vert^2 \\
        &\leq \frac{\eta_{k}}{2}\Vert \nabla g_{\pi_{k}^{p}}(x_{k}^{p})\Vert^2 + \frac{1}{2\eta_{k}}\Vert x_{k}^{p}(p,j)-x_{k}^{p}\Vert^2 + \frac{L}{2}\Vert x_{k}^{p}(p,j)-x_{k}^{p}\Vert^2 \\
        &\leq \eta_{k}\Vert \nabla g_{\pi_{k}^{p}}(x_{k}^{p})\Vert^2 + \eta_{k}\Vert q_{\pi_{k}^{p}}\Vert^2 + \frac{\eta_{k}}{2}\Vert \nabla f_{\pi_{k}^{p}}(x_{k}^{j})-\nabla f_{\pi_{k}^{j}}(x_{k}^{j})\Vert^2 \\
        & \quad + \frac{L\eta_{k}^{2}}{2}\Vert \nabla f_{\pi_{k}^{p}}(x_{k}^{j})-\nabla f_{\pi_{k}^{j}}(x_{k}^{j})\Vert^2 \\
        &\leq \eta_{k}\Vert \nabla f_{\pi_{k}^{p}}(x_{k}^{p})\Vert^2 + \eta_{k}\Vert q_{\pi_{k}^{p}}\Vert^2 + \eta_{k} \Vert \nabla f_{\pi_{k}^{p}}(x_{k}^{j})-\nabla f_{\pi_{k}^{j}}(x_{k}^{j})\Vert^2 \\
        &\leq \eta_{k}\Vert \nabla f_{\pi_{k}^{p}}(x_{k}^{p})\Vert^2 + 2\eta_{k}\Vert \nabla f_{\pi_{k}^{p}}(x_{k}^{j})\Vert^2 + 2\eta_{k} \Vert \nabla f_{\pi_{k}^{j}}(x_{k}^{j})\Vert^2 \\
        &\quad +\eta_{k}\Vert q_{\pi_{k}^{p}}\Vert^2,
    \end{align*}
    where the first inequality is by (\ref{lemma, special func property, eq 2}) (note that the proof of (\ref{lemma, special func property, eq 2}) does not require the function $h$ to achieve its minimum at $x_{*}$), the second inequality is by Young's inequality, the third inequality is by Lemma \ref{lemma, diff btw virtual and real seq}, and the fourth inequality is by the stepsize condition $L\eta_{k}\leq 1$\footnote{In the third inequality, the term $\eta_{k} \Vert q_{\pi_{k}^{p}}\Vert^2$ can actually be removed. However, since this term will appear in the upper bound of $|\mathbb{E}\Omega_{\pi_{k}^{r}}(x_{k}^{p})|$, we keep it so that we can have the same upper bounds for $|\mathbb{E}\tilde{\Omega}_{\pi_{k}^{r}}(x_{k}^{p})|$ and $|\mathbb{E}\Omega_{\pi_{k}^{r}}(x_{k}^{p})|$, which facilitates the subsequent calculations.}. Taking expectation in the above inequality gives
    \begin{equation*}
        \mathbb{E}\left| g_{\pi_{k}^{p}}(x_{k}^{p}(p,j))-g_{\pi_{k}^{p}}(x_{k}^{p}) \right| \leq 4\eta_{k}\mathbb{E} \Vert \nabla f_{\pi_{k}^{p}}(x_{k}^{p})\Vert^2 + 4\eta_{k} \mathbb{E}\Vert \nabla f_{\pi_{k}^{j}}(x_{k}^{j})\Vert^2 + 4\eta_{k}\sigma_{*}^{2},
    \end{equation*}
    where we use the fact that conditioned on $\pi_{1},\cdots,\pi_{k-1},\pi_{k}^{1},\cdots, \pi_{k}^{j-1}$, the r.v. $\pi_{k}^{j}$ has the same distribution as $\pi_{k}^{p}$ for $j<p$. Plugging the above inequality into (\ref{lemma, the other bound, RR component vs finite-sum, eq 1}) gives the first desired equation. By applying the above estimations almost verbatim, we get the upper bound of $| \mathbb{E}\Omega_{\pi_{k}^{r}}(x_{k}^{p})|$. This part of proof is omitted for brevity.
\end{proof}

For notational convenience, define
\begin{align*}
    \tilde{\mathcal{S}}_{X} &\coloneqq \sum_{t=1}^{M}\alpha_{t}v_{t+1}\mathbb{E}G(x_{t}), \ \tilde{\mathcal{S}}_{D} \coloneqq \sum_{t=1}^{M}\alpha_{t}v_{t+1}\mathbb{E}b_{i_{t}}(z_{t+1},x_{t}), \\
    \tilde{\mathcal{S}}_{Z} &\coloneqq \sum_{t=1}^{M} \alpha_{t}v_{t+1}\mathbb{E}b_{i_{t}}(z_{t+1}), \ \tilde{\mathcal{S}}_{V} \coloneqq \sum_{t=1}^{M}\alpha_{t}^{2}v_{t+1}\mathbb{E}\Vert q_{i_{t}}\Vert^2 = \sigma_{*}^{2}\sum_{t=1}^{M}\alpha_{t}^{2}v_{t+1}.
\end{align*}
We would like to upper bound the following quantity
\begin{equation*}
    \tilde{\mathcal{B}} \coloneqq \sum_{s=1}^{M}\Delta v_{s}\sum_{t=s}^{M}\alpha_{t} \left| \mathbb{E}\tilde{\Omega}_{i_{t}}(x_{s})\right| + \sum_{t=1}^{M}\alpha_{t}v_{t+1}\left| \mathbb{E}\tilde{\Omega}_{i_{t}}(x_{t})\right|
\end{equation*}
with $\tilde{\mathcal{S}}_{X}$, $\tilde{\mathcal{S}}_{D}$, $\tilde{\mathcal{S}}_{Z}$, and $\tilde{\mathcal{S}}_{V}$.

\begin{lemma}
\label{lemma, the other bound, bounding the overall bias}
    Assume $L\eta_{k}\leq 1$ in the algorithm. Then
    \begin{equation*}
        \tilde{\mathcal{B}} \leq 60C\Lambda L\eta_{1} \left(\tilde{\mathcal{S}}_{D}+\tilde{\mathcal{S}}_{Z} \right) + 60C\Lambda \tilde{\mathcal{S}}_{V},
    \end{equation*}
    where the above upper bound of $\tilde{\mathcal{B}}$ is denoted as $\tilde{\mathcal{B}}_{\rm{up}}$. We also have
    \begin{equation*}
        \sum_{s=1}^{M}\Delta v_{s}\sum_{t=s}^{M}\alpha_{t} \left| \mathbb{E}\Omega_{i_{t}}(x_{s})\right| + \sum_{t=1}^{M}\alpha_{t}v_{t+1}\left| \mathbb{E}\Omega_{i_{t}}(x_{t})\right|\leq \tilde{\mathcal{B}}_{\rm{up}}.
    \end{equation*}
\end{lemma}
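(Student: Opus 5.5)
The plan is to mirror the proof of Lemma \ref{lemma, bound the overall bias}, with Lemma \ref{lemma, the other bound, RR component vs finite-sum} replacing Lemmas \ref{lemma, RR component vs finite-sum} and \ref{lemma, RR component vs finite-sum v2}. Both pieces of $\tilde{\mathcal{B}}$ are first reduced to sums of $\eta_{k}\,\mathbb{E}\Vert \nabla f_{\pi_{k}^{p}}(x_{k}^{p})\Vert^2$ weighted by $c_{k,p}=\eta_{k}v_{\tau_{k,p}+1}$.

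\textbf{Step 1 (reduce the first sum to within-epoch terms).} When $t$ and $s$ lie in different epochs, conditioning on the history up to the end of the epoch containing $s$ gives $\mathbb{E}\tilde{\Omega}_{i_{t}}(x_{s})=0$. The ghost index $M$ is uniform and independent, so it also contributes zero. The first sum therefore becomes $\sum_{k}\eta_{k}\sum_{p}\Delta v_{\tau_{k,p}}\sum_{r\ge p}|\mathbb{E}\tilde{\Omega}_{\pi_{k}^{r}}(x_{k}^{p})|$. The bound in Lemma \ref{lemma, the other bound, RR component vs finite-sum} does not depend on $r$, so this is at most $\sum_{k,p}\gamma_{k,p}\,(\cdots)$ with $\gamma_{k,p}=\eta_{k}\Delta v_{\tau_{k,p}}(n-p+1)\le c_{k,p}$, exactly as in (\ref{lemma, bound the overall bias, eq 2}). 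The second sum is directly $\sum_{k,p}c_{k,p}|\mathbb{E}\tilde{\Omega}_{\pi_{k}^{p}}(x_{k}^{p})|$. Hence
\begin{equation*}
\tilde{\mathcal{B}}\le 8\sum_{k,p}c_{k,p}\eta_{k}\Big[\mathbb{E}\Vert\nabla f_{\pi_{k}^{p}}(x_{k}^{p})\Vert^2+\frac{1}{n}\sum_{j<p}\mathbb{E}\Vert\nabla f_{\pi_{k}^{j}}(x_{k}^{j})\Vert^2+\sigma_{*}^{2}\Big].
\end{equation*}

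\textbf{Step 2 (handle the lagged sum).} In the middle term I interchange the order of summation and apply (\ref{lemma, aux seq property, eq 2}) in the form $\frac{1}{n}\sum_{p>j}v_{\tau_{k,p}+1}\le C\Lambda v_{\tau_{k,j}}\le C\Lambda v_{\tau_{k,j}+1}$. The whole bracket is then controlled by $C\Lambda$ times $\sum_{t}\alpha_{t}^{2}v_{t+1}\mathbb{E}\Vert\nabla f_{i_{t}}(x_{t})\Vert^2$ plus $\tilde{\mathcal{S}}_{V}$, with an absolute constant in front of the combination.

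\textbf{Step 3 (gradient norm in terms of the target quantities).} I use the same three-term split as in the proof of Lemma \ref{lemma, the other bound, one step inequality}:
\begin{equation*}
\Vert\nabla f_{i_{t}}(x_{t})\Vert^2\le 6L\,b_{i_{t}}(z_{t+1},x_{t})+6L\,b_{i_{t}}(z_{t+1})+3\Vert q_{i_{t}}\Vert^2 .
\end{equation*}
Multiplying by $\alpha_{t}^{2}v_{t+1}$, using $\alpha_{t}\le\eta_{1}$ on the first two terms, and summing over $t$ gives
\begin{equation*}
\sum_{t}\alpha_{t}^{2}v_{t+1}\mathbb{E}\Vert\nabla f_{i_{t}}(x_{t})\Vert^2\le 6L\eta_{1}(\tilde{\mathcal{S}}_{D}+\tilde{\mathcal{S}}_{Z})+3\tilde{\mathcal{S}}_{V}.
\end{equation*}
Collecting the constants from Steps 1 and 2 should give at most $60C\Lambda$ in front of each term, using $C\Lambda\ge1$.

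\textbf{Step 4 (the $\Omega$ version).} Lemma \ref{lemma, the other bound, RR component vs finite-sum} gives the identical bound for $|\mathbb{E}\Omega_{\pi_{k}^{r}}(x_{k}^{p})|$. The cross-epoch vanishing also holds for $\Omega$, because $\mathbb{E}_{i}b_{i}=B$. Repeating Steps 1–3 verbatim therefore yields the second claim.

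The main obstacle is the cross-epoch step together with the ghost term at $t=M$, where $x_{M}$ and $i_{M}$ must be handled so that the weights line up. Beyond that, the constant bookkeeping only requires care.
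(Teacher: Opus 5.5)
Your proposal is correct and is essentially the paper's proof: you kill the cross-epoch and ghost terms, absorb the first sum of $\tilde{\mathcal{B}}$ into terms of the form $\alpha_{t}v_{t+1}|\mathbb{E}\tilde{\Omega}_{i_{t}}(x_{t})|$, apply Lemma \ref{lemma, the other bound, RR component vs finite-sum}, handle the lagged gradient sum with (\ref{lemma, aux seq property, eq 2}), and finish with the three-term split of $\Vert\nabla f_{i_{t}}(x_{t})\Vert^2$. (For the absorption the paper uses $\Delta v_{s}H_{s}\le \alpha_{s}v_{s+1}$ where you use the within-epoch bound $\gamma_{k,p}\le c_{k,p}$; this is the same bookkeeping.) One small nit: getting the constant $60C\Lambda$ from $6\cdot 8(1+C\Lambda)$ requires $C\Lambda\ge 4$, not merely $C\Lambda\ge 1$; this holds because $C=3e^{3}\ge 50$.
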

\begin{proof}
    Conditioned on $i_{1},i_{2},\cdots, i_{t-1}$, the r.v. $i_{t}$ has the same distribution as $i_{s}$ for $t$ and $s$ in the same epoch with $t\geq s$, hence in this case we have $\mathbb{E} \tilde{\Omega}_{i_{t}}(x_{s}) = \mathbb{E}\tilde{\Omega}_{i_{s}}(x_{s})$. When $t$ and $s$ ($t\geq s$) are not in the same epoch, we have $\mathbb{E}\tilde{\Omega}_{i_{t}}(x_{s}) = 0$.
    Define
    \begin{equation*}
        \tilde{\mathcal{B}}_{1} \coloneqq \sum_{s=1}^{M}\Delta v_{s}\sum_{t=s}^{M}\alpha_{t} \left| \mathbb{E}\tilde{\Omega}_{i_{s}}(x_{s})\right|, \ \tilde{\mathcal{B}}_{2} \coloneqq \sum_{t=1}^{M}\alpha_{t}v_{t+1}\left| \mathbb{E}\tilde{\Omega}_{i_{t}}(x_{t})\right|,
    \end{equation*}
    then $\tilde{\mathcal{B}} = \tilde{\mathcal{B}}_{1}+\tilde{\mathcal{B}}_{2}$. We have
    \begin{align*}
        \tilde{\mathcal{B}}_{1} &= \sum_{s=1}^{M}\Delta v_{s}\left| \mathbb{E}\tilde{\Omega}_{i_{s}}(x_{s})\right| \sum_{t=s}^{M}\alpha_{t} = \sum_{s=1}^{M}\Delta v_{s}H_{s} \left| \mathbb{E}\tilde{\Omega}_{i_{s}}(x_{s})\right| \leq \sum_{t=1}^{M} \alpha_{t}v_{t+1} \left| \mathbb{E} \tilde{\Omega}_{i_{t}}(x_{t})\right| \\
        & \leq 4\sum_{t=1}^{M}\alpha_{t}^{2}v_{t+1}\left[\mathbb{E}\Vert \nabla f_{i_{t}}(x_{t})\Vert^2 + \frac{1}{n}\sum_{j<t, j\sim t}\mathbb{E}\Vert \nabla f_{i_{j}}(x_{j})\Vert^2 + \sigma_{*}^2 \right],
    \end{align*}
    where the  first inequality is by Definition \ref{def, auxiliary seq},  the notation $j\sim t$ means the index $j$ is in the same epoch with the index $t$, and the second inequality is by Lemma \ref{lemma, the other bound, RR component vs finite-sum}. By Lemma \ref{lemma, the other bound, RR component vs finite-sum}, we also have
    \begin{equation*}
        \tilde{\mathcal{B}}_{2} \leq 4\sum_{t=1}^{M}\alpha_{t}^{2}v_{t+1}\left[\mathbb{E}\Vert \nabla f_{i_{t}}(x_{t})\Vert^2 + \frac{1}{n}\sum_{j<t, j\sim t}\mathbb{E}\Vert \nabla f_{i_{j}}(x_{j})\Vert^2 + \sigma_{*}^2 \right].
    \end{equation*}
    Combining the above two inequalities gives
    \begin{align}
    \notag
        \tilde{\mathcal{B}} &\leq 8\sum_{t=1}^{M}\alpha_{t}^{2}v_{t+1}\left[\mathbb{E}\Vert \nabla f_{i_{t}}(x_{t})\Vert^2 + \frac{1}{n}\sum_{j<t, j\sim t}\mathbb{E}\Vert \nabla f_{i_{j}}(x_{j})\Vert^2 + \sigma_{*}^2 \right] \\
    \notag
        &= 8\sum_{t=1}^{M} \alpha_{t}^{2}v_{t+1}\mathbb{E}\Vert \nabla f_{i_{t}}(x_{t})\Vert^2 + \frac{8}{n}\sum_{k=1}^{K}\eta_{k}^{2}\sum_{p=1}^{n}v_{\tau_{k,p}+1}\sum_{j=1}^{p-1}\mathbb{E}\Vert \nabla f_{\pi_{k}^{j}}(x_{k}^{j})\Vert^2 + 8\tilde{\mathcal{S}}_{V} \\
    \notag
        &= 8\sum_{t=1}^{M} \alpha_{t}^{2}v_{t+1}\mathbb{E}\Vert \nabla f_{i_{t}}(x_{t})\Vert^2 + \frac{8}{n}\sum_{k=1}^{K}\eta_{k}^{2} \sum_{j=1}^{n-1}\mathbb{E}\Vert \nabla f_{\pi_{k}^{j}}(x_{k}^{j})\Vert^2 \sum_{p=j+1}^{n}v_{\tau_{k,p}+1} + 8\tilde{\mathcal{S}}_{V} \\
    \notag
        &\leq 8\sum_{t=1}^{M} \alpha_{t}^{2}v_{t+1}\mathbb{E}\Vert \nabla f_{i_{t}}(x_{t})\Vert^2 + 8C\Lambda \sum_{k=1}^{K}\eta_{k}^{2}\sum_{j=1}^{n}v_{\tau_{k,j}+1}\mathbb{E}\Vert \nabla f_{\pi_{k}^{j}}(x_{k}^{j})\Vert^2 + 8\tilde{\mathcal{S}}_{V} \\
    \label{lemma, the other bound, bounding the overall bias, eq 1}
        &\leq 10C\Lambda \sum_{t=1}^{M}\alpha_{t}^{2}v_{t+1}\mathbb{E}\Vert \nabla f_{i_{t}}(x_{t})\Vert^2 + 10\tilde{\mathcal{S}}_{V},
    \end{align}
    where the second equality is by interchanging the order of summation, the second inequality is by Lemma \ref{lemma, aux seq property}, and the last inequality is by $C\geq 50$ and $\Lambda \geq 1$. Note that
    \begin{align*}
        \Vert \nabla f_{i_{t}}(x_{t})\Vert^2 &\leq 3\Vert \nabla f_{i_{t}}(x_{t})-\nabla f_{i_{t}}(z_{t+1})\Vert^2 + 3\Vert \nabla f_{i_{t}}(z_{t+1})-\nabla f_{i_{t}}(x_{*})\Vert^2 + 3\Vert \nabla f_{i_{t}}(x_{*})\Vert^2 \\
        &\leq 6Lb_{i_{t}}(z_{t+1},x_{t}) + 6Lb_{i_{t}}(z_{t+1}) + 3\Vert q_{i_{t}}\Vert^2,
    \end{align*}
    where the last inequality is by co-coercivity of $f_{i_{t}}$ and Definition \ref{def, basics}. Taking expectation in the above inequality gives
    \begin{equation}
    \label{lemma, the other bound, bounding the overall bias, eq 2}
        \mathbb{E}\Vert \nabla f_{i_{t}}(x_{t})\Vert^2 \leq 6L\mathbb{E}b_{i_{t}}(z_{t+1},x_{t}) + 6L\mathbb{E}b_{i_{t}}(z_{t+1}) + 3\sigma_{*}^{2}.
    \end{equation}
    Plugging (\ref{lemma, the other bound, bounding the overall bias, eq 2}) into (\ref{lemma, the other bound, bounding the overall bias, eq 1}) gives
    \begin{align*}
        \tilde{\mathcal{B}} &\leq 60C\Lambda L\eta_{1} \sum_{t=1}^{M}\alpha_{t}v_{t+1}\mathbb{E} b_{i_{t}}(z_{t+1},x_{t}) + 60C\Lambda L\eta_{1} \sum_{t=1}^{M}\alpha_{t}v_{t+1}\mathbb{E}b_{i_{t}}(z_{t+1}) + 40C\Lambda \tilde{\mathcal{S}}_{V} \\
        &= 60C\Lambda L\eta_{1} \left(\tilde{S}_{D}+\tilde{S}_{Z}\right) + 60C\Lambda \tilde{\mathcal{S}}_{V},
    \end{align*}
    where the first inequality is by $C \geq 50$ and $\Lambda \geq 1$. Note that we have the same upper bound for $\left| \mathbb{E}\tilde{\Omega}_{\pi_{k}^{r}}(x_{k}^{p})\right|$ and $\left| \mathbb{E}\Omega_{\pi_{k}^{r}}(x_{k}^{p})\right|$ in Lemma \ref{lemma, the other bound, RR component vs finite-sum}, then the second desired inequality of this lemma is proved.
\end{proof}

\begin{lemma}
\label{lemma, the other bound, bound S at Z}
    Assume the initial stepsize $\eta_{1}$ satisfies $120C\Lambda L\eta_{1}\leq 1$. Then
    \begin{align*}
        \tilde{\mathcal{B}} &\leq 120 C\Lambda L\eta_{1}\left(\tilde{\mathcal{S}}_{D} + \tilde{\mathcal{S}}_{X} \right) + 120 C\Lambda \tilde{\mathcal{S}}_{V}, \\
        \tilde{\mathcal{S}}_{Z} &\leq 2\left(\tilde{\mathcal{S}}_{D} + \tilde{\mathcal{S}}_{X} \right) + 120 C\Lambda \tilde{\mathcal{S}}_{V}.
    \end{align*}
\end{lemma}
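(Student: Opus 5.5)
The plan is to bound $\tilde{\mathcal{S}}_{Z}$ in terms of $\tilde{\mathcal{S}}_{X}$, $\tilde{\mathcal{S}}_{D}$, $\tilde{\mathcal{S}}_{V}$, using only the convex-combination structure of $z_{t+1}$ and the bias bound of Lemma \ref{lemma, the other bound, bounding the overall bias}. Substituting the result into $\tilde{\mathcal{B}}_{\rm{up}}$ and absorbing terms then gives the first claim.

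\textbf{Step 1 (bound $\tilde{\mathcal{S}}_{Z}$ via convexity).} By (\ref{theorem, the hard bound, eq 3}) in the form of the $z$-recursion with $u$ replaced by $x$, $z_{t+1}=\frac{v_{1}}{v_{t+1}}x_{*}+\sum_{s=1}^{t}\frac{\Delta v_{s}}{v_{t+1}}x_{s}$. Since $b_{i}$ is convex with $b_{i}(x_{*})=0$,
\[
v_{t+1}b_{i_{t}}(z_{t+1})\leq \sum_{s=1}^{t}\Delta v_{s}b_{i_{t}}(x_{s}).
\]
I will write $b_{i_{t}}(x_{s})=B(x_{s})+\Omega_{i_{t}}(x_{s})$ and note $B=G$. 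Multiplying by $\alpha_{t}$, summing and interchanging sums gives
\[
\tilde{\mathcal{S}}_{Z}\leq \sum_{s}\Delta v_{s}H_{s}\mathbb{E}G(x_{s})+\sum_{s}\Delta v_{s}\sum_{t\geq s}\alpha_{t}\mathbb{E}\Omega_{i_{t}}(x_{s}).
\]
The first term is at most $a\tilde{\mathcal{S}}_{X}\leq \tilde{\mathcal{S}}_{X}$, because $\Delta v_{s}H_{s}=a\alpha_{s}v_{s+1}$. The second term is controlled by the second inequality of Lemma \ref{lemma, the other bound, bounding the overall bias}, which bounds it by $\tilde{\mathcal{B}}_{\rm{up}}$. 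Hence
\[
\tilde{\mathcal{S}}_{Z}\leq \tilde{\mathcal{S}}_{X}+60C\Lambda L\eta_{1}(\tilde{\mathcal{S}}_{D}+\tilde{\mathcal{S}}_{Z})+60C\Lambda\tilde{\mathcal{S}}_{V}.
\]

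\textbf{Step 2 (absorb).} Since $120C\Lambda L\eta_{1}\leq 1$, the coefficient $60C\Lambda L\eta_{1}\leq 1/2$. Moving the $\tilde{\mathcal{S}}_{Z}$ term to the left yields
\[
\tfrac12\tilde{\mathcal{S}}_{Z}\leq \tilde{\mathcal{S}}_{X}+\tfrac12\tilde{\mathcal{S}}_{D}+60C\Lambda\tilde{\mathcal{S}}_{V},
\]
that is, $\tilde{\mathcal{S}}_{Z}\leq 2\tilde{\mathcal{S}}_{X}+\tilde{\mathcal{S}}_{D}+120C\Lambda\tilde{\mathcal{S}}_{V}$. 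This implies the second claim.

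\textbf{Step 3 (first claim).} I will plug Step 2 into $\tilde{\mathcal{B}}\leq 60C\Lambda L\eta_{1}(\tilde{\mathcal{S}}_{D}+\tilde{\mathcal{S}}_{Z})+60C\Lambda\tilde{\mathcal{S}}_{V}$. This gives
\[
\tilde{\mathcal{B}}\leq 60C\Lambda L\eta_{1}\bigl(2\tilde{\mathcal{S}}_{D}+2\tilde{\mathcal{S}}_{X}\bigr)+\bigl(60C\Lambda L\eta_{1}\cdot120C\Lambda+60C\Lambda\bigr)\tilde{\mathcal{S}}_{V}.
\]
Using $60C\Lambda L\eta_{1}\leq 1/2$ again, the $\tilde{\mathcal{S}}_{V}$ coefficient is at most $60C\Lambda+60C\Lambda=120C\Lambda$, which is the stated bound.

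\textbf{Main obstacle.} The one delicate point is in Step 1. The $z$ sequence in this part is formed from the $x_{t}$ (not $u_{t}$), and $\mathbb{E}\Omega_{i_{t}}(x_{s})$ must be handled with signs. I therefore rely on the absolute-value version in Lemma \ref{lemma, the other bound, bounding the overall bias} to avoid sign issues.
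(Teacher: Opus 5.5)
Your proposal is correct and follows the paper's proof step for step. Convexity of $b_{i_{t}}$ along the convex combination $z_{t+1}$, combined with the second inequality of Lemma~\ref{lemma, the other bound, bounding the overall bias}, gives $\tilde{\mathcal{S}}_{Z}\leq \tilde{\mathcal{S}}_{X}+\tilde{\mathcal{B}}_{\mathrm{up}}$; the absorption via $60C\Lambda L\eta_{1}\leq 1/2$ and the substitution back into $\tilde{\mathcal{B}}_{\mathrm{up}}$ are then identical to the paper's.

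The only difference is in how the cross term is handled. You keep $\sum_{s}\Delta v_{s}\sum_{t\geq s}\alpha_{t}\mathbb{E}\Omega_{i_{t}}(x_{s})$ and bound it by the first sum in that lemma. The paper instead first replaces $\mathbb{E}b_{i_{t}}(x_{s})$ by $\mathbb{E}b_{i_{s}}(x_{s})$, which is not valid in general when $t$ lies in a later epoch than $s$, and then uses the second sum. Your bookkeeping is therefore, if anything, the cleaner of the two.
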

\begin{proof}
    By (\ref{theorem, the hard bound, eq 3}), we have
    \begin{equation*}
        v_{t+1}b_{i_{t}}(z_{t+1}) \leq v_{1}b_{i_{t}}(x_{*}) + \sum_{s=1}^{t}\Delta v_{s} b_{i_{t}}(x_{s}) = \sum_{s=1}^{t}\Delta v_{s} b_{i_{t}}(x_{s}).
    \end{equation*}
    Then
    \begin{align*}
        \tilde{\mathcal{S}}_{Z} &= \sum_{t=1}^{M}\alpha_{t}v_{t+1}\mathbb{E}b_{i_{t}}(z_{t+1}) \leq \sum_{t=1}^{M}\alpha_{t}\sum_{s=1}^{t}\Delta v_{s}\mathbb{E}b_{i_{t}}(x_{s}) \\
        &= \sum_{t=1}^{M}\alpha_{t}\sum_{s=1}^{t}\Delta v_{s} \mathbb{E}b_{i_{s}}(x_{s})
        = \sum_{s=1}^{M}\Delta v_{s}\mathbb{E}b_{i_{s}}(x_{s})\sum_{t=s}^{M}\alpha_{t} \\
        &= \sum_{s=1}^{M} \Delta v_{s}H_{s}\mathbb{E}b_{i_{s}}(x_{s}) \leq a\sum_{t=1}^{M} \alpha_{t}v_{t+1}\mathbb{E}G(x_{t}) + a\sum_{t=1}^{M}\alpha_{t}v_{t+1}\mathbb{E}\Omega_{i_{t}}(x_{t}),
    \end{align*}
    where the third equality is by interchanging the order of summation, and the last inequality is by Definition \ref{def, auxiliary seq} (also note that $G(x_{t})=B(x_{t})=f(x_{t})-f_{*}$). Recall the definition of $\tilde{\mathcal{S}}_{X}$ and Lemma \ref{lemma, the other bound, bounding the overall bias}, we have
    \begin{align*}
        \tilde{\mathcal{S}}_{Z} &\leq \tilde{\mathcal{S}}_{X} + \tilde{\mathcal{B}}_{\rm{up}}= \tilde{\mathcal{S}}_{X}+60C\Lambda L\eta_{1}(\tilde{\mathcal{S}}_{D}+\tilde{\mathcal{S}}_{Z})+60C\Lambda \tilde{\mathcal{S}}_{V} \\
        &\leq \tilde{\mathcal{S}}_{X} + \frac{1}{2}\tilde{\mathcal{S}}_{D} + \frac{1}{2}\tilde{\mathcal{S}}_{Z} + 60C\Lambda \tilde{\mathcal{S}}_{V},
    \end{align*}
    where the second inequality is by the setting $120C\Lambda L\eta_{1}\leq 1$. Rearranging the above inequality gives
    \begin{equation*}
        \tilde{\mathcal{S}}_{Z} \leq \tilde{\mathcal{S}}_{D} + 2\tilde{\mathcal{S}}_{X} + 120 C\Lambda \tilde{\mathcal{S}}_{V}.
    \end{equation*}
    By Lemma \ref{lemma, the other bound, bounding the overall bias} and the above equation,
    \begin{align*}
        \tilde{\mathcal{B}} &\leq 60C\Lambda L\eta_{1}\left(\tilde{\mathcal{S}}_{D}+\tilde{\mathcal{S}}_{Z} \right) + 60C\Lambda \tilde{\mathcal{S}}_{V}  \\
        &\leq 60C\Lambda L\eta_{1}\tilde{\mathcal{S}}_{D} + 60C\Lambda L\eta_{1}\tilde{\mathcal{S}}_{D} + 120 C\Lambda L\eta_{1}\tilde{\mathcal{S}}_{X} + 60 C\Lambda \tilde{\mathcal{S}}_{V} + 60C\Lambda \tilde{\mathcal{S}}_{V} \\
        &= 120C\Lambda L\eta_{1}\left(\tilde{\mathcal{S}}_{D}+\tilde{\mathcal{S}}_{X} \right) + 120C\Lambda \tilde{\mathcal{S}}_{V},
    \end{align*}
    where the second inequality is by the setting $120C\Lambda L\eta_{1}\leq 1$.
\end{proof}

\section{Proof of Theorem \ref{theorem, the other bound}}

\begin{proof}
    By Lemma \ref{lemma, the other bound, one step inequality}, we have
    \begin{align}
    \notag
        G(x_{t}) - G(z_{t+1}) &\leq \frac{\Vert x_{t}-z_{t+1}\Vert^2 - \Vert x_{t+1}-z_{t+1}\Vert^2}{2\alpha_{t}} + \tilde{\Omega}_{i_{t}}(z_{t+1}) - \tilde{\Omega}_{i_{t}}(x_{t}) \\
    \label{theorem, the other bound, eq 0}
        &\quad - \frac{1}{2}b_{i_{t}}(z_{t+1},x_{t}) + 3L\alpha_{t}b_{i_{t}}(z_{t+1}) + \frac{3\alpha_{t}}{2}\Vert q_{i_{t}}\Vert^2.
    \end{align}
    Multiplying both sides of (\ref{theorem, the other bound, eq 0}) by $c_{t}=\alpha_{t}v_{t+1}$ gives
    \begin{align}
    \notag
        c_{t}\left(G(x_{t}) - G(z_{t+1}) \right) &\leq \frac{v_{t+1}\Vert x_{t}-z_{t+1}\Vert^2 - v_{t+1}\Vert x_{t+1}-z_{t+1}\Vert^2}{2} + c_{t}\tilde{\Omega}_{i_{t}}(z_{t+1}) \\
    \label{theorem, the other bound, eq 2}
        &\quad - c_{t}\tilde{\Omega}_{i_{t}}(x_{t}) - \frac{c_{t}}{2}b_{i_{t}}(z_{t+1},x_{t}) + 3Lc_{t}\alpha_{t}b_{i_{t}}(z_{t+1}) + \frac{3c_{t}\alpha_{t}}{2}\Vert q_{i_{t}}\Vert^2.
    \end{align}
    By the argument used in Theorem \ref{theorem, the hard bound}, we have
    \begin{equation*}
        \frac{v_{t+1}\Vert x_{t}-z_{t+1}\Vert^2 - v_{t+1}\Vert x_{t+1}-z_{t+1}\Vert^2}{2} \leq \frac{v_{t}\Vert x_{t}-z_{t}\Vert^2 - v_{t+1}\Vert x_{t+1}-v_{t+1}\Vert^2}{2},
    \end{equation*}
    and the RHS of the above inequality will telescope in the summation w.r.t. $t$. By the definition of $\{z_{t}\}_{t}$, 
    \begin{equation}
    \label{theorem, the other bound, eq 3}
        z_{t+1} = \frac{v_{1}}{v_{t+1}}x_{*} + \sum_{s=1}^{t}\Delta v_{s} x_{s},
    \end{equation}
    i.e., $z_{t+1}$ is a convex combination of the points $x_{*}, x_{1}, x_{2}, \cdots, x_{t}$. Hence
    \begin{align*}
        c_{t}\tilde{\Omega}_{i_{t}}(z_{t+1}) &= \alpha_{t} v_{t+1}\tilde{\Omega}_{i_{t}}(z_{t+1}) = \alpha_{t}v_{t+1}g_{i_{t}}(z_{t+1}) - c_{t}G(z_{t+1}) \\
        &\leq \alpha_{t}v_{1}g_{i_{t}}(x_{*}) + \alpha_{t}\sum_{s=1}^{t}\Delta v_{s} g_{i_{t}}(x_{s}) - c_{t}G(z_{t+1}) \\
        &= \alpha_{t}\sum_{s=1}^{t}\Delta v_{s} g_{i_{t}}(x_{s}) - c_{t}G(z_{t+1}).
    \end{align*}
    Summing (\ref{theorem, the other bound, eq 2}) from $t=1$ to $M$, using the above inequality, and taking expectation give us
    \begin{align}
    \notag
        \sum_{t=1}^{M}c_{t}\mathbb{E}G(x_{t}) &\leq \frac{v_{1}\Vert x_{1}-x_{*}\Vert^2}{2} + \sum_{t=1}^{M}\alpha_{t}\sum_{s=1}^{t}\Delta v_{s}\mathbb{E}g_{i_{t}}(x_{s})
        - \sum_{t=1}^{M}c_{t}\mathbb{E}\tilde{\Omega}_{i_{t}}(x_{t}) \\
    \label{theorem, the other bound, eq 4}
        &\quad -\frac{1}{2}\sum_{t=1}^{M}c_{t}\mathbb{E}b_{i_{t}}(z_{t+1},x_{t}) + 3L\sum_{t=1}^{M}c_{t}\alpha_{t}\mathbb{E}b_{i_{t}}(z_{t+1}) + \frac{3\sigma_{*}^2}{2}\sum_{t=1}^{M}c_{t}\alpha_{t}.
    \end{align}
    For the second term in the RHS of (\ref{theorem, the other bound, eq 4}), we have
    \begin{align*}
        \sum_{t=1}^{M}\alpha_{t}\sum_{s=1}^{t}\Delta v_{s}g_{i_{t}}(x_{s}) &= \sum_{t=1}^{M} \alpha_{t}\sum_{s=1}^{t}\Delta v_{s}G(x_{s}) + \sum_{t=1}^{M}\alpha_{t}\sum_{s=1}^{t}\Delta v_{s}\tilde{\Omega}_{i_{t}}(x_{s}) \\
        &= \sum_{s=1}^{M} \Delta v_{s}G(x_{s})\sum_{t=s}^{M}\alpha_{t} + \sum_{t=1}^{M}\alpha_{t}\sum_{s=1}^{t}\Delta v_{s}\tilde{\Omega}_{i_{t}}(x_{s}) \\
        &=\sum_{s=1}^{M}\Delta v_{s}H_{s}G(x_{s}) + \sum_{t=1}^{M}\alpha_{t}\sum_{s=1}^{t}\Delta v_{s}\tilde{\Omega}_{i_{t}}(x_{s}) \\
        &= \sum_{t=1}^{M} a c_{t} G(x_{t}) + \sum_{t=1}^{M}\alpha_{t}\sum_{s=1}^{t}\Delta v_{s}\tilde{\Omega}_{i_{t}}(x_{s}),
    \end{align*}
    where the second equality is by interchanging the order of summation, and the last equality is by Definition \ref{def, auxiliary seq}. Hence
    \begin{equation}
    \label{theorem, the other bound, eq 5}
        \mathbb{E} \sum_{t=1}^{M}\alpha_{t}\sum_{s=1}^{t}\Delta v_{s}g_{i_{t}}(x_{s}) = a \sum_{t=1}^{M}c_{t}\mathbb{E}G(x_{t}) + \sum_{t=1}^{M}\alpha_{t}\sum_{s=1}^{t}\Delta v_{s}\mathbb{E}\tilde{\Omega}_{i_{t}}(x_{s}).
    \end{equation}
    Define $\tilde{D}\coloneqq \Vert x_{1}-x_{*}\Vert$, plug (\ref{theorem, the other bound, eq 5}) into (\ref{theorem, the other bound, eq 4}) and note that $\lambda = 1-a$, we have
    \begin{equation}
    \label{theorem, the other bound, eq 6}
        \lambda \tilde{\mathcal{S}}_{X} \leq \frac{v_{1}\tilde{D}^{2}}{2} + \tilde{\mathcal{B}} - \frac{1}{2}\tilde{\mathcal{S}}_{D} + 3L\eta_{1}\tilde{\mathcal{S}}_{Z} + \frac{3}{2}\tilde{\mathcal{S}}_{V}.
    \end{equation}
    By Lemma \ref{lemma, the other bound, bound S at Z}, we have
    \begin{align*}
        \lambda \tilde{\mathcal{S}}_{X} &\leq \frac{v_{1}\tilde{D}^{2}}{2} + \left(120 C\Lambda L\eta_{1} + 6L\eta_{1} -\frac{1}{2} \right)\tilde{\mathcal{S}}_{D} + \left(120 C\Lambda L\eta_{1} + 6L\eta_{1} \right)\tilde{\mathcal{S}}_{X} \\
        &\quad + \left(120 C\Lambda + 360 C\Lambda L\eta_{1} + \frac{3}{2} \right)\tilde{\mathcal{S}}_{V} \\
        &\leq \frac{v_{1}\tilde{D}^{2}}{2} + \frac{\lambda}{2}\tilde{\mathcal{S}}_{X} + 128 C\Lambda \tilde{\mathcal{S}}_{V}, 
    \end{align*}
    where the last inequality is by the setting $120 C\Lambda L\eta_{1}\leq \frac{\lambda}{4}$. Rearranging the terms in the above equation gives
    \begin{equation}
    \label{theorem, the other bound, eq 7}
        \lambda \sum_{t=1}^{M}\alpha_{t}v_{t+1}\mathbb{E}G(x_{t}) \leq v_{1}\tilde{D}^{2} + 256C\Lambda \tilde{\mathcal{S}}_{V}.
    \end{equation}
    Dropping all terms except for $\lambda \alpha_{M}v_{M+1}\mathbb{E}G(x_{M})$ in the LHS of (\ref{theorem, the other bound, eq 7}) gives
    \begin{equation*}
        \lambda\beta \mathbb{E}G(x_{M}) \leq v_{1}\tilde{D}^{2} + 256 C\Lambda \tilde{\mathcal{S}}_{V}.
    \end{equation*}
    Dividing both sides of the above inequality by $\lambda\beta$ and using Lemma \ref{lemma, aux seq property} give
    \begin{align*}
        \mathbb{E}G(x_{M}) &\leq \frac{C\tilde{D}^{2}}{H_{1}} + 256 C^{2}\Lambda \sigma_{*}^{2}\sum_{t=1}^{M}\frac{\alpha_{t}^{2}}{H_{t}} \\
        &\leq \frac{C\tilde{D}^{2}}{nA_{1}} + 256 C^{2}\Lambda \sigma_{*}^{2}\sum_{k=1}^{K}\eta_{k}^{2}\sum_{p=1}^{n} \frac{1}{(n-p+1)A_{k}} \\
        &\leq \frac{C\tilde{D}^{2}}{nA_{1}} + 256 C^{2}\Lambda \sigma_{*}^{2}\sum_{k=1}^{K}\frac{\eta_{k}^{2}}{A_{k}}\sum_{p=1}^{n}\frac{1}{p} \\
        &\leq \frac{C\tilde{D}^{2}}{nA_{1}} + 256 \left(1+\log(n)\right)C^{2}\Lambda \sigma_{*}^{2}\sum_{k=1}^{K}  \frac{\eta_{k}^{2}}{A_{k}},
    \end{align*}
    where the  second inequality is by $H_{1}=nA_{1}+\beta\geq nA_{1}$ and
    \begin{equation*}
        H_{\tau_{k,p}}=(n-p+1)\eta_{k}+nA_{k+1}+\beta\geq (n-p+1)(\eta_{k}+A_{k+1})=(n-p+1)A_{k}.
    \end{equation*}
    Recall Definition \ref{def, the other bound, basics}, we have
    \begin{equation*}
        \mathbb{E}\left[f(x_{M})-f_{*} \right] \leq \frac{C\tilde{D}^{2}}{nA_{1}} + 256 C^{2}\Lambda^{2}\sigma_{*}^{2}\sum_{k=1}^{K}\frac{\eta_{k}^{2}}{A_{k}},
    \end{equation*}
    which is the desired inequality.
\end{proof}

\bibliography{sn-bibliography}

\section*{AI Disclosure}

OpenAI's GPT-$5.6$ Sol is used as an interactive research assistant to help with the mathematical explorations. The authors take full responsibility for the content of this manuscript.

\end{document}